\documentclass[11pt]{amsart}
\usepackage{a4}
\usepackage{amsfonts}
\usepackage{amsmath}
\usepackage{amssymb}

\usepackage{hyperref}
\usepackage{amscd}
\date{}
\sloppy

\numberwithin{equation}{section}
\swapnumbers
\theoremstyle{plain}
\newtheorem{Lemma}{Lemma}
\numberwithin{Lemma}{section}
\newtheorem{Corollary}[Lemma]{Corollary}
\newtheorem{Theorem}[Lemma]{Theorem}

\theoremstyle{definition}

\newtheorem{Definition}[Lemma]{Definition} 

\theoremstyle{remark}
\newtheorem{Remark}[Lemma]{Remark}

\newcommand\A{\mathbb A}
\newcommand\X{\mathbb X}
\newcommand\C{\mathbb C}

\newcommand\Q{\mathbb Q}
\newcommand\R{\mathbb R}
\newcommand\Z{\mathbb Z}

\renewcommand\Re{\operatorname{Re}}
\renewcommand\Im{\operatorname{Im}}

\newcommand\id{\mathrm{id}}
\newcommand\im{\operatorname{im}}
\newcommand\punkt{\mathord{\,\cdot\,}}
\newcommand\ch{\mathrm{ch}}
\newcommand\tr{\operatorname{tr}}
\newcommand\str{\operatorname{str}}

\newcommand\cho{\mathrm{ch}^{\mathrm o}}
\newcommand\chosl{\widetilde{\ch}{}^{\mathrm o}}
\newcommand\rk{\operatorname{rk}}
\newcommand\Jnull{\,{}^0\!J}

\newcommand\eps{\varepsilon}

\begin{document}

\title[Torsion Invariants for Families]{Torsion Invariants for Families}
\author{Sebastian Goette}
\address{Mathematisches Institut\\
Universit\"at Freiburg\\
Eckerstr.~1\\
79104 Freiburg\\
Germany}
\email{sebastian.goette@math.uni-freiburg.de}

\begin{abstract}
  We give an overview over the higher torsion invariants of Bismut-Lott,
  Igusa-Klein and Dwyer-Weiss-Williams, including some more or less recent
  developments.
\end{abstract}
\keywords{Bismut-Lott torsion, Igusa-Klein torsion, Dwyer-Weiss-Williams torsion, higher analytic torsion, higher Franz-Reidemeister torsion}
\subjclass[2000]{58J52 (57R22 55R40)}
\dedicatory{Dedicated to Jean-Michel Bismut on the occasion
of his 60th birthday}
\thanks{Supported in part by DFG special programme
``Global Differential Geometry''}

\maketitle

The classical Franz-Reidemeister torsion~$\tau_{\mathrm{FR}}$ is an invariant
of manifolds with acyclic unitarily flat vector bundles~\cite{Reide}, \cite{Franz}.
In contrast to most other algebraic-topological invariants known at that time,
it is invariant under homeomorphisms and simple-homotopy equivalences,
but not under general homotopy equivalences.
In particular,
it can distinguish homeomorphism types
of homotopy-equivalent lens spaces.
Hatcher and Wagoner suggested in~\cite{HW} to extend~$\tau_{\mathrm{FR}}$
to families of manifolds~$p\colon E\to B$
using pseudoisotopies and Morse theory.
A construction of such a higher Franz-Reidemeister torsion~$\tau$
was first proposed by John Klein in~\cite{Kphd}
using a variation of Waldhausen's $A$-theory.
Other descriptions of~$\tau$ were later given by Igusa and Klein
in~\cite{IKpict}, \cite{IKfilt}.

In this overview,
we will refer to the construction in~\cite{Ibuch}.
Let~$p\colon E\to B$ be a family of smooth manifolds,
and let~$F\to E$ be a unitarily flat complex vector bundle of rank~$r$
such that the fibrewise cohomology with coefficients in~$F$
forms a unipotent bundle over~$B$.
Using a function~$h\colon E\to\R$
that has only Morse and birth-death singularities along each fibre of~$p$,
and with trivialised fibrewise unstable tangent bundle,
one constructs a homotopy class of maps~$\xi_h(M/B;F)$ from~$B$
to a classifying space~$Wh^h(M_r(\C),U(r))$.
Now,
the higher torsion~$\tau(E/B;F)\in H^{4\bullet}(B;\R)$ is defined
as the pull-back of a certain universal cohomology class~$\tau
\in H^{4\bullet}\bigl(Wh^h(M_r(\C),U(r));\R\bigr)$.

On the other hand,
Ray and Singer defined an analytic torsion~$\mathcal T_{\mathrm{RS}}$
of unitarily flat complex vector bundles on compact manifolds in~\cite{RS}
and conjectured that~$\mathcal T_{\mathrm{RS}}=\tau_{\mathrm{FR}}$.
This conjecture was established independently by Cheeger \cite{Cheeger}
and M\"uller~\cite{Mueller}.
The most general comparison result was given by Bismut and Zhang in~\cite{BZ1}
and~\cite{BZ2}.
In~\cite{Wagoner},
Wagoner predicted the existence of a ``higher analytic torsion''
that detects homotopy classes in the diffeomorphism groups
of smooth closed manifolds.
Such an invariant was defined later by Bismut and Lott in~\cite{BL}.

Kamber and Tondeur constructed characteristic
classes~$\cho(F)\in H^{\mathrm{odd}}(M;\R)$
of flat vector bundles~$F\to M$ in \cite{KT}
that provide obstructions towards finding a parallel metric.
If~$p\colon E\to B$ is a smooth bundle of compact manifolds
and~$F\to E$ is flat,
Bismut and Lott proved a Grothendieck-Riemann-Roch theorem
relating the characteristic
classes of~$F$ to those of the fibrewise cohomology~$H(E/B;F)\to B$.
The higher analytic torsion form~$\mathcal T (T^HE,g^{TX},g^F)$
appears in a refinement of this theorem to the level of differential forms.
Its component 
in degree~$0$ equals the Ray-Singer analytic torsion of the fibres,
and the refined Grothendieck-Riemann-Roch theorem implies a variation formula
for the Ray-Singer torsion that was already discovered in~\cite{BZ1}.

In~\cite{DWW},
Dwyer, Weiss and Williams gave yet another approach to higher torsion.
They defined three generalised Euler characteristics
for bundles~$p\colon E\to B$ of homotopy finitely dominated spaces,
topological mani\-folds, and smooth mani\-folds, respectively,
with values in certain bundles over~$B$.
A flat complex vector bundle~$F\to E$ defines a homotopy
class of maps from~$E$ to the algebraic $K$-theory space~$K(\C)$.
The Euler characteristics above give analogous maps~$B\to K(\C)$
for the fibrewise cohomology~$H(E/B;F)\to B$.
If~$F$ is fibrewise acyclic,
these maps lift to three different generalisations of Reidemeister torsion,
given again as sections in certain bundles over~$B$.
By comparing the three characteristics for smooth manifold bundles,
Dwyer, Weiss and Williams also showed that the Grothendieck-Riemann-Roch
theorem in~\cite{BL} holds already on the level of classifying maps to~$K(\C)$.

Bismut-Lott torsion~$\mathcal T(E/B;F)$
and Igusa-Klein torsion~$\tau(E/B;F)$ are very closely related.
For particularly nice bundles,
this was proved by Bismut and the author in~\cite{BGmorse}
and~\cite{G1}, \cite{G2}.
We will establish the general case in~\cite{G3}.
Igusa also gave a set of axioms in~\cite{Iax} that
characterise~$\tau(E/B;F)$ and hopefully also~$\mathcal T(E/B;F)$
when~$F$ is trivial.
Badzioch, Dorabia\l a and Williams recently gave a cohomological version
of the smooth Dwyer-Weiss-Williams torsion in~\cite{BDW}.
Together with Klein, they proved in~\cite{BDKW}
that it satisfies Igusa's axioms as well.
On the other hand, the other two torsions in~\cite{DWW}
are definitely coarser than Bismut-Lott and Igusa-Klein torsion,
because they do not depend on the differentiable structure.
They might however be related to the Bismut-Lott or Igusa-Klein torsion
of a virtual flat vector bundle~$F$ of rank zero,
see Remark~\ref{Rem7.5} below.

Let us now recall one of the most import applications
of higher torsion invariants.
It is possible to construct two smooth manifold bundles~$p_i\colon E_i\to B$
for~$i=0$, $1$ with diffeomorphic fibres,
such that there exists a homeomorphism~$\varphi\colon E_0\to E_1$
with~$p_0=p_1\circ\varphi$ and with a lift to an isomorphism of vertical
tangent bundles,
but no such diffeomorphism.
The first example of such bundles~$p_i$ was constructed by Hatcher,
and it was later proved by B\"okstedt that~$p_0$ and~$p_1$ are
not diffeomorphic in the sense above~\cite{Boek}.
Igusa showed in~\cite{Ibuch}
that the higher torsion invariants~$\tau(E_i/B;\C)$ differ,
and by~\cite{G1},
the Bismut-Lott torsions~$\mathcal T(E_i/B;\C)$ differ as well.
Hatcher's example can be generalised to construct many different smooth
structures on bundles~$p\colon E\to B$.
We expect that higher torsion invariants distinguish
many of these different structures, but not all of them.

One may wonder why one wants to consider so many different higher torsion
invariants,
in particular, if some of them are conjectured to provide the same information.
We will see that different constructions of these invariants give rise to
different applications.
Since Hatcher's example and its generalisations come with natural
fibrewise Morse functions,
the difference of the Igusa-Klein torsions of different smooth structures
is sometimes easy to compute.
Due to Igusa's axiomatic approach,
one can also understand the topological meaning of Igusa-Klein torsion.
On the other hand, one can classify smooth structures on a topological manifold
bundle~$p\colon E\to B$ in a more abstract way
as classes of sections in a certain bundle of classifying spaces over~$B$.
These section spaces fit well into the framework of generalised Euler
characteristics and Dwyer-Weiss-Williams torsion.
But some extra work is necessary to recover cohomological information
from this approach.

Finally, Bismut-Lott torsion is defined using the language of local index
theory.
The proofs of some interesting properties of Bismut-Lott torsion
were inspired by parallel results in the setting of the classical Atiyah-Singer
family index theorem or the Grothendieck-Riemann-Roch theorem in Arakelov
geometry.
Bismut-Lott torsion is defined for
any flat vector bundle~$F\to E$, whereas Igusa-Klein torsion
and Dwyer-Weiss-Williams torsion can only be defined if the
fibrewise cohomology is of a special type.
This makes Bismut-Lott torsion useful for other applications,
for example in the definition of a secondary $K$-theory by Lott~\cite{Lsec}.
Heitsch and Lazarov generalised Bismut-Lott torsion
to foliations~\cite{HL},
so one may try to use it to detect different smooth structures on a given
foliation, which induce the same structures on the space of leaves.
Finally,
Bismut and Lebeau recently defined higher torsion invariants
using a hypoelliptic Laplacian on the cotangent bundle~\cite{Bhypo},
\cite{BLprep}. Conjecturally, this torsion can give some information
about the fibrewise geodesic flow.

This overview is organised as follows.
We start by discussing the index theorem for flat vector bundles
by Bismut and Lott in Section~\ref{sect1}.
In Sections~\ref{sect2} and~\ref{sect3},
we introduce Bismut-Lott torsion and state some properties and applications
that are inspired by local index theory.
In Section~\ref{sect4} and~\ref{sect5},
we introduce Igusa-Klein torsion and relate it to Bismut-Lott torsion
using two different approaches.
Section~\ref{sect6} is devoted to generalised Euler characteristics
and Dwyer-Weiss-Williams torsion.
In Section~\ref{sect7},
we discuss smooth structures on fibre bundles and a possible generalisation
to foliations.
Finally,
we sketch the hypoelliptic operator
on the cotangent bundle and its torsion due to Bismut and Lebeau in Section~\ref{sect8}.

We have tried to keep the notation and the normalisation of the invariants
consistent throughout this paper; as a result, both will
disagree with most of the references.
In particular, we use the Chern normalisation of~\cite{BGmorse},
which is the only normalisation for which Theorem~\ref{thm3.6} and a few
other results hold.
To keep this paper reasonably short,
only the most basic versions of some of the theorems on higher torsion
will be explained.
Thus we will not discuss some non-trivial generalisations of the theorems below
to fibre bundles with group actions.
We will also only give hints towards the relation with the classical
Atiyah-Singer family index theorem or the Grothendieck-Riemann-Roch theorem in
algebraic geometry.
Finally,
we will not discuss the interesting refinements and generalisations
of classical Franz-Reidemeister torsion and Ray-Singer
torsion for single manifolds that have been invented in the last few years.

\subsubsection*{Acknowledgements}
This paper is a somewhat extended version of a series of lectures
at the Chern Institute at Tianjin in~2007,
whose support and hospitality we highly appreciated.
The author was supported in part by the DFG special programme
``Global Differential Geometry''.

We are grateful to J.-M. Bismut for introducing us to higher torsion,
and also to U. Bunke, W. Dorabia\l a, K. Igusa, K. K\"ohler, X. Ma, B. Williams
and W. Zhang,
from whom we learned many different aspects of this intriguing subject.
We also thank the anonymous referee for her or his helpful comments.

\section{An Index Theorem for Flat Vector Bundles}\label{sect1}

There exists a theory of characteristic classes of flat vector bundles that is parallel to the theory of Chern classes and Chern-Weil differential forms. These classes have been constructed by Kamber and Tondeur~\cite{KT}, and are closely related to the classes used by Borel~\cite{Bo} to study the algebraic $K$-theory of number fields.

Analytic torsion forms made their first appearance in a local index theorem for these Kamber-Tondeur classes by Bismut and Lott~\cite{BL}. Refinements of this theorem have later been given by Dwyer, Weiss and Williams~\cite{DWW} and by Bismut~\cite{Bccs} and Ma and Zhang~\cite{MZ1}.

\subsection{Characteristic classes for flat vector bundles}\label{sect1.1}
Before we introduce Kamber-Tondeur forms, let us first recall classical Chern-Weil theory. Let~$V \to M$ be a complex vector bundle, and let~$\nabla^{V}$ be a connection on~$V$ with curvature~$(\nabla^{V})^{2} \in \Omega^{2}(M;{\rm End} V)$. Then one defines the Chern character form
\begin{equation}\label{eq1.1}
\ch\bigl(V,\nabla^{V}\bigr) = \tr_{V}\biggl(e^{-\tfrac{(\nabla^{V})^{2}}{2\pi i}}\biggr) \in \Omega^{\rm even}(M;\C)\;.
\end{equation}
This form is closed because the covariant derivative~$[\nabla^{V},(\nabla^{V})^{2}]$ of the curvature vanishes by the Bianchi identity, so
\begin{equation}\label{eq1.2}
  d\,\ch\bigl(V,\nabla^{V}\bigr)
  = \tr_{V}\biggl(\biggl[\nabla^{V},
	e^{-\tfrac{(\nabla^{V})^{2}}{2\pi i}}\biggr]\biggr) = 0\;.
\end{equation}

If~$\nabla^{V,0}$ and~$\nabla^{V,1}$ are two connections on~$V$, one can choose a connection~$\nabla^{\tilde{V}}$ on the natural extension~$\tilde{V}$ of~$V$ to~$M \times [0,1]$ with~$\nabla^{\tilde{V}}\mid_{M \times \{i\}} = \nabla^{V,i}$ for~$i = 0$, $1$. Stokes' theorem then implies
\begin{equation}\label{eq1.3}
  \begin{gathered}
    \ch\bigl(V,\nabla^{V,1}\bigr) - \ch\bigl(V,\nabla^{V,0}\bigr)
    = d\,\widetilde{\ch}\bigl(V,\nabla^{V,0},\nabla^{V,1}\bigr), \\
    \mbox{with}\qquad\widetilde{\ch}\bigl(V,\nabla^{V,0}, \nabla^{V,1}\bigr)
    = \int _{0}^{1}\iota_{\tfrac{\partial}{\partial t}}
      \ch\Bigl(\tilde{V},\nabla^{\tilde{V}}\Bigr)\, dt.
  \end{gathered}
\end{equation}
Thus, the class~$\ch(V)$ of~$\ch(V,\nabla^{V})$ in de Rham cohomology is independent of~$\nabla^{V}$. Moreover, $\widetilde{\ch}(V,\nabla^{V,0},\nabla^{V,1})$ is independent of the choice of~$\nabla^{\tilde{V}}$ up to an exact form.

Now let~$F \to M$ be a flat vector bundle, so~$F$ comes with a fixed connection~$\nabla^{F}$ such that~$(\nabla^{F})^{2} = 0$. We choose a metric~$g^{F}$ on~$F$ and define the adjoint connection~$\nabla^{F,*}$ with respect to~$g^{F}$ such that
\begin{equation}\label{eq1.4}
dg(v,w) = g\bigl(\nabla^{F}v,w\bigr) + g\bigl(v,\nabla^{F,*}w\bigr)
\end{equation}
for all sections~$v$, $w$ of~$F$. Then the form
\begin{equation}\label{eq1.5}
  \cho\bigl(F,g^{F}\bigr)
  = \pi i\, \widetilde{\ch}\bigl(F,\nabla^{F},\nabla^{F,*}\bigr)
  \in \Omega^{\mathrm{odd}}(M;\R)
\end{equation}
is real, odd and also closed, because
\begin{equation}\label{eq1.6}
  d\, \cho\bigl(F,g^{F}\bigr)
  =\pi i\,\ch\bigl(F,\nabla^{F,*}\bigr)-\pi i\,\ch\bigl(F,\nabla^{F}\bigr)=0\;.
\end{equation}
Clearly, if~$g^{F}$ is parallel with respect to~$\nabla^{F}$, then~$\cho(F,g^{F}) = 0$.

Let~$g^{F,0}$, $g^{F,1}$ be two metrics on~$F$.
Proceeding as in~\eqref{eq1.3},
one constructs a form~$\chosl(F,g^{F,0}, g^{F,1})\in\Omega^{\mathrm{even}}(M)$
such that
\begin{equation}\label{eq1.7}
    \cho\bigl(F,g^{F,1}\bigr) - \cho\bigl(F,g^{F,0}\bigr)
    = d\,\chosl\bigl(F,g^{F,0},g^{F,1}\bigr)\;.
\end{equation}
So again, the de Rham cohomology class~$\cho(F)$ of~$\cho(F,g^{F})$ does not depend on the choice of metric~$g^{F}$ --- but of course, it depends on the flat connection~$\nabla^{F}$. Note that the form~$\chosl(F,g^{F,0},g^{F,1})$ is again naturally well-defined up to an exact form.

\begin{Definition}\label{Def1.0a}
The forms~$\cho_{k}(F,g^{F})=\cho(F,g^{F}) \in \Omega^{2k-1}(M)$ are called {\em Kamber-Tondeur forms}, and their classes~$\cho_{k}(F) \in H^{2k-1}(M;\R)$ are called {\em Kamber-Tondeur classes} or {\em Borel classes}.
\end{Definition}

Note that in the literature, there are at least three different normalisations of these classes. There are however good reasons to stick to the normalisation here, see section~\ref{sect3.4}.

For later reference, we give a more explicit construction of the Kamber-Tondeur forms. If we define a connection~$\nabla^{\tilde{F}}$ over~$p \colon M \times [0,1] \to M$ that interpolates between~$\nabla^{F}$ and~$\nabla^{F,*}$ by
\begin{equation}\label{eq1.7a}
\nabla^{\tilde{F}} = (1-t)\,p^{*}\nabla^{F} + tp^{*}\nabla^{F,*}\;,
\end{equation}
then by flatness of~$\nabla^{F}$ and~$\nabla^{F,*}$,
\begin{equation}\label{eq1.7b}
\bigl(\nabla^{\tilde{F}}\bigr)^{2} = -t(1-t)\,p^{*}\bigl(\nabla^{F,*}-\nabla^{F}\bigr)^{2} - p^{*}\bigl(\nabla^{F,*} - \nabla^{F}\bigr)\, dt\;.
\end{equation}
From this formula and~\eqref{eq1.3}, \eqref{eq1.5} one deduces that there exist rational multiples~$c_{k}$ of~$(2\pi i)^{k}$ such that
\begin{equation}\label{eq1.7c}
\begin{gathered}
\cho\bigl(F,g^{F}\bigr) = \sum_{k=0}^{\infty} c_{k}\tr_F\bigl(\omega(F,g^{F})^{2k+1}\bigr)\;, \\
\mbox{with}\qquad\omega\bigl(F,g^{F}\bigr) = \nabla^{F,*} - \nabla^{F}
=(g^F)^{-1}\,[\nabla^F,g^F] \in \Omega^{1}(M;\operatorname{End} V)\;.
\end{gathered}
\end{equation}
Bismut and Lott use the real, odd and closed differential forms
\begin{equation}\label{eq1.7d}
\tr_F\biggl(\omega\bigl(F,g^{F}\bigr)e^{\tfrac{\omega(F,g^{F})^{2}}{2\pi i}}\biggr)
\end{equation}
and their cohomology classes instead of~$\cho$, which is more convenient for some of the following constructions. It is not hard to see that these forms are given by a similar formula as~\eqref{eq1.7c}, but with different constants~$c_{k} \in (2\pi i)^{k} \Q$.
We prefer the Chern normalisation given by~\eqref{eq1.5} for reasons
explained in Remark~\ref{rem3.6a}.

The Chern-Weil classes like~$\ch(V)$ vanish whenever~$V$ admits a flat connection. Similarly, the classes~$\cho(F)$ vanish whenever~$F$ admits a $\nabla^{F}$-parallel metric. We will see that there are more analogies between these constructions. A good overview can be found in the introduction to~\cite{Lsec}.

\subsection{The cohomological index theorem}\label{1.2}
The central theme in~\cite{BL} is a family index theorem for flat vector bundles in terms of their Kamber-Tondeur classes.
The analytic index in question is given by fibrewise cohomology.
More precisely, let~$p \colon E \to B$ be a smooth proper submersion,
in other words, a smooth fibre bundle with $n$-dimensional compact fibres,
to be denoted~$M$.
Let~$(F,\nabla^{F})$ be a flat vector bundle
 then we consider the vector bundles~$H^{k}(E/B;F)\to B$,
whose fibres over~$x \in B$ are given as the twisted de Rham cohomology
\begin{equation}\label{eq1.9}
H^{k}(E/B;F)_{x} = H^{k}\bigl(\Omega^{\bullet}(E_{x};F|_{E_x}),\nabla^{F}\bigr)\;.
\end{equation}
The bundles~$H^{k}(E/B;F)$ naturally carry the Gau\ss-Manin connection~$\nabla^{H}$, which is again flat. The analytic index is thus given by the virtual flat vector bundle
\begin{equation}\label{eq1.10}
H(E/B;F) = \bigoplus_{k=0}^{\dim M}(-1)^{k}H^{k}(E/B;F)\;. 
\end{equation}

The topological index is given by the Becker-Gottlieb transfer of~\cite{BeG1}.
Recall that the Becker-Gottlieb transfer is given as a stable homotopy class of
maps~$tr_{E/B}\colon S^\bullet B_+\to S^\bullet E_+$.
It acts on de Rham cohomology by
\begin{equation}\label{eq1.11}
tr_{E/B}^*\,\alpha = \int _{E/B} e(TM)\, \alpha \in H^{k}(B;\R)
\end{equation}
for all~$\alpha \in H^{k}(E;\R)$,
where~$e(TM)\in H^n(E;o(TM)\otimes\R)$ denotes the Chern-Weil theoretic Euler class
of the vertical tangent bundle~$TM = \ker\, dp \subset TE$,
and~$\int _{E/B}$ denotes integration over the fibre.
Here is a cohomological version of the family index theorem.

\begin{Theorem}[Bismut and Lott~\cite{BL}]\label{thm1.1}
For all smooth proper submersions~$p \colon\penalty-50  E\to B$ and all flat vector bundles~$F \to E$,
\begin{equation}\label{eq1.12}
\cho(H(E/B;F)) = tr_{E/B}^*\,\cho(F) \in H^{\mathrm{odd}}(B,\R)\;.
\end{equation}
\end{Theorem}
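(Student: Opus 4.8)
The plan is to follow Bismut and Lott: one attaches to the fibrewise de Rham complex an infinite-dimensional analogue of the Kamber--Tondeur form on~$B$, shows by a transgression argument that its de Rham class is~$\cho(H(E/B;F))$, and then evaluates it by a fibrewise local index computation that produces the Euler form of the vertical tangent bundle. To set up, fix a horizontal distribution~$T^HE\subset TE$ complementary to~$TM=\ker dp$, a fibrewise metric~$g^{TX}$ on~$TM$, and a metric~$g^F$ on~$F$. Let~$\mathcal W=\Omega^\bullet(E/B;F)$ be the infinite-rank $\Z/2$-graded bundle over~$B$ whose fibre over~$x$ is~$\Omega^\bullet(E_x;F|_{E_x})$; it carries the fibrewise de Rham operator~$d^{E/B}$, the flat $\Z/2$-graded connection~$\nabla^{\mathcal W}$ obtained by differentiating forms along $T^HE$-horizontal lifts, and the $L^2$-metric~$h^{\mathcal W}$ determined by~$g^{TX}$ and~$g^F$. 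For~$t>0$ let~$\mathbb A_t$ be the associated Bismut superconnection --- a zeroth-order perturbation of~$\nabla^{\mathcal W}+\sqrt t\,d^{E/B}$ by a term built from the curvature of~$T^HE$ --- and let~$\mathbb A_t^*$ be its adjoint with respect to~$h^{\mathcal W}$. Applying the construction~\eqref{eq1.3}--\eqref{eq1.5} to the superconnection Chern character~$\tr_s\bigl(e^{-\mathbb A_t^2/2\pi i}\bigr)$ in place of~$\ch(V,\nabla^V)$ produces a closed odd form~$\alpha_t\in\Omega^{\mathrm{odd}}(B;\R)$, the family version of the Kamber--Tondeur form.

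A direct computation shows that~$\partial_t\alpha_t$ is exact, so the class~$[\alpha_t]\in H^{\mathrm{odd}}(B;\R)$ is independent of~$t$, and it remains to compute it by letting~$t\to\infty$ and~$t\to 0$. As~$t\to\infty$, the rescaled fibrewise Laplacian occurring in~$\mathbb A_t^2$ forces the relevant contribution onto the fibrewise harmonic forms. By Hodge theory these constitute a finite-rank subbundle canonically isomorphic to~$H(E/B;F)$, the induced flat connection being the Gau\ss--Manin connection~$\nabla^H$ and the induced metric the~$L^2$-metric~$h^{L^2}$. A spectral-gap estimate shows~$\alpha_t\to\cho(H(E/B;F),h^{L^2})$ as~$t\to\infty$, so~$[\alpha_t]=\cho(H(E/B;F))$ for every~$t$.

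As~$t\to 0$, one rescales the fibrewise Bismut Laplacian in the style of Getzler. The leading term of the local heat-kernel asymptotics along each fibre is governed by a harmonic-oscillator (Mehler) kernel whose fibrewise supertrace contributes the Chern--Weil Euler form~$e(TM,\nabla^{TM})$ of the vertical tangent bundle, while the coefficient bundle contributes the fibrewise Kamber--Tondeur form of~$(F,g^F)$; hence~$\alpha_t\to\int_{E/B}e(TM)\,\cho(F,g^F)$ in~$\Omega^{\mathrm{odd}}(B;\R)$. Passing to cohomology and combining with the large-time limit yields
\[
  \cho(H(E/B;F)) = \int_{E/B} e(TM)\,\cho(F) \in H^{\mathrm{odd}}(B;\R).
\]
Finally, by the cohomological action of the Becker--Gottlieb transfer recorded in~\eqref{eq1.11}, one has~$\int_{E/B}e(TM)\,\cho(F)=tr_{E/B}^*\,\cho(F)$, which is~\eqref{eq1.12}.

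The main obstacle is the small-time limit: one must show that the~$t\to 0$ asymptotics of the superconnection Chern character localise to the Chern--Weil Euler form of the fibre, uniformly over the base. This is the genuine local-index-theory input --- it requires uniform heat-kernel estimates along the fibres together with a Getzler-type rescaling argument --- whereas the large-time limit, the transgression identity (exactness of~$\partial_t\alpha_t$), and the translation through the transfer formula are comparatively formal. An alternative, combinatorial route replaces~$\mathcal W$ by the finite-dimensional cochain complex of a fibrewise triangulation or Morse function, avoiding the heat-kernel analysis at the cost of proving the Euler-form identity directly.
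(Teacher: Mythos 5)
Your proposal is correct and follows essentially the same route as the paper (and Bismut--Lott): one forms the odd, closed Kamber--Tondeur-type forms from the flat fibrewise de Rham superconnection and its $L^2$-adjoint as in~\eqref{eq2.20}--\eqref{eq2.21}, uses closedness in the $t$-variable to equate the small-time limit (Getzler rescaling, giving $\int_{E/B}e(TM,\nabla^{TM})\,\cho(F,g^F)$ as in~\eqref{eq2.22}) with the large-time limit (fibrewise Hodge theory, giving $\cho(H,g_{L^2}^{H})$ as in~\eqref{eq2.23}), and then invokes~\eqref{eq1.11}. This is exactly the argument sketched in Section~\ref{sect2.2}, of which Theorem~\ref{thm1.1} is the cohomological shadow.
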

One notes that~$tr_{E/B}^*$ preserves the degree of differential forms and cohomology classes. For this reason, an analogous result holds for the classes constructed in~\eqref{eq1.7d}, and in fact for all classes of the form~\eqref{eq1.7c}, independent of the choice of the constants~$c_{k}$.

The cohomological index theorem can be refined as follows,
see also Section~\ref{sect3.5}.
Following~\cite{CrS},
to a vector bundle~$V \to M$ with connection~$\nabla^{V}$,
one associates a Cheeger-Simons
differential character~$\widehat{\ch}(V,\nabla^{V})$,
from which both the rational Chern
character~$\ch(V)\in H^{\mathrm{even}}(M;\Q)$
and the Chern-Weil form~$\ch(V,\nabla^{V}) \in \Omega^{\mathrm{even}}(M)$
can be read off.
If~$\nabla^{V}$ is a flat connection,
then~$\widehat{\ch}(V,\nabla^{V})$ becomes a cohomology class
in~$H^{\mathrm{odd}}(M;\C/\Q)$.
It has already been observed in~\cite{BL} that its imaginary part is given by
\begin{equation}\label{eq1.16a}
\Im\,\widehat{\ch}\bigl(V,\nabla^{V}\bigr) = \cho(V) \in H^{\mathrm{odd}}(M;\R)\;.
\end{equation}

\begin{Theorem}[Bismut~\cite{Bccs}, Ma and Zhang~\cite{MZ1}]\label{thm1.1a}
For all smooth proper submersions and all flat vector bundles~$F \to E$,
\begin{equation}\label{eq1.16b}
\widehat{\ch}\bigl(H(E/B;F),\nabla^{H}\bigr) = tr_{E/B}^*\,\widehat{\ch}\bigl(F,\nabla^{F}\bigr) \in H^{\mathrm{odd}}(B;\C/\Q)\;.
\end{equation}
\end{Theorem}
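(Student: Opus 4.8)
The plan is to refine the cohomological identity of Theorem~\ref{thm1.1} to the level of Cheeger--Simons differential characters, mimicking the strategy by which~\eqref{eq1.12} itself was obtained, but now keeping track of the secondary ($\C/\Q$-valued) data. First I would reduce the statement to its two ``shadows'': by the very construction of~$\widehat{\ch}$ recalled from~\cite{CrS}, a class in~$H^{\mathrm{odd}}(B;\C/\Q)$ attached to a flat bundle is determined by (i)~its underlying Chern character class in~$H^{\mathrm{even}}(B;\Q)$ and (ii)~its Chern--Weil representative, which for a flat connection is the trivial form; equivalently, it is pinned down by its image under the Bockstein-type maps into rational cohomology together with its behaviour under the exact sequence $H^{\mathrm{even}}(B;\Q)\to H^{\mathrm{even}}(B;\C)\to H^{\mathrm{odd}}(B;\C/\Q)\to H^{\mathrm{odd}}(B;\Q)$. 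So the identity~\eqref{eq1.16b} splits into an integral (rational) part and an $\R$-valued part. The rational part is exactly the statement that the $K$-theory Euler characteristic of the fibrewise cohomology agrees with the Becker--Gottlieb transfer applied to the classifying map of~$F$; this is the integral Grothendieck--Riemann--Roch statement, which is precisely what Dwyer--Weiss--Williams~\cite{DWW} established on the level of classifying maps to~$K(\C)$ (and is what forces the transfer, rather than the naive pushforward, to appear). The $\R$-valued part is~\eqref{eq1.16a} combined with Theorem~\ref{thm1.1}: $\Im\widehat{\ch}(H(E/B;F),\nabla^H)=\cho(H(E/B;F))=tr_{E/B}^*\,\cho(F)=\Im\,tr_{E/B}^*\,\widehat{\ch}(F,\nabla^F)$.

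Next I would assemble these two pieces. Having matched both the rational reduction and the imaginary ($\R$) part, the difference of the two sides of~\eqref{eq1.16b} lies in the kernel of the map $H^{\mathrm{odd}}(B;\C/\Q)\to H^{\mathrm{odd}}(B;\Q)$ and has vanishing $\R$-part; by the coefficient sequence this kernel is the image of $H^{\mathrm{even}}(B;\R)/H^{\mathrm{even}}(B;\Q)$, on which the imaginary part is injective, so the difference vanishes. The one genuinely new input needed — the reason this is a theorem and not a formal corollary of Theorem~\ref{thm1.1} — is the compatibility of the \emph{integral} index with the Becker--Gottlieb transfer; for this I would invoke the $K(\C)$-level Grothendieck--Riemann--Roch statement of Dwyer--Weiss--Williams, or argue directly using a fibrewise Morse function / handle decomposition to identify the fibrewise cohomology bundle, up to stable equivalence, with a transfer of~$F$ at the level of $K$-theory, which is where the Euler class $e(TM)$ in~\eqref{eq1.11} enters.

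The main obstacle I expect is precisely controlling the \emph{torsion-free} but non-real ambiguity: matching the imaginary parts only determines the characters up to a class pulled back from $H^{\mathrm{odd}}(B;\Q)$, i.e.\ up to an \emph{a priori} nontrivial rational Cheeger--Simons character, so one cannot avoid proving an honest integral refinement of the index theorem and must be careful that the natural transformations (the inclusion $K(\C_\delta)\to K(\C)$ of the discrete versus topological $K$-theory, and the resulting Cheeger--Simons classes) are compatible with transfer maps — transfer is functorial in the relevant sense, but verifying that the Becker--Gottlieb transfer used in~\eqref{eq1.11} is the same stable map that governs the $K$-theoretic pushforward requires the comparison established in~\cite{DWW}. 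Once that compatibility is in hand, the rest is the bookkeeping of the coefficient sequence described above.
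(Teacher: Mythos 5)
Your reduction does not actually pin down the class, and this is where the argument breaks. A flat Cheeger--Simons character in $H^{\mathrm{odd}}(B;\C/\Q)$ is \emph{not} determined by its rational characteristic class together with its (vanishing) curvature and its imaginary part: writing $\C/\Q\cong\R/\Q\oplus i\R$, the imaginary part is $\cho$ (this is~\eqref{eq1.16a}, and matching it is indeed just Theorem~\ref{thm1.1}), but the $\R/\Q$-valued real part carries independent secondary information --- already for a unitarily flat line bundle over $S^1$ it is the holonomy angle mod $\Q$, invisible to both the rational Chern character and to $\cho$. Your bookkeeping of the coefficient sequence is also off: the Bockstein for $0\to\Q\to\C\to\C/\Q\to0$ sends $H^{\mathrm{odd}}(B;\C/\Q)$ to $H^{\mathrm{even}}(B;\Q)$ (not to $H^{\mathrm{odd}}(B;\Q)$), its kernel is the image of $H^{\mathrm{odd}}(B;\C)$, and the imaginary part is certainly not injective on that image (any real class mod $\Q$ has zero imaginary part). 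So ``rational shadow matches $+$ imaginary parts match'' leaves exactly the real, $\R/\Q$-valued discrepancy unconstrained --- and that is the genuinely hard part of Theorem~\ref{thm1.1a}, not a formality. The way to rescue your strategy is to use Theorem~\ref{thm1.2} at full strength rather than only for its rational shadow: the entire class $\widehat{\ch}$ of flat bundles (real and imaginary parts) is a characteristic class for flat bundles, additive and stable, hence defined on $K(\C)$; then both sides of~\eqref{eq1.16b} are pullbacks of one universal class along maps $B\to K(\C)$ that Dwyer--Weiss--Williams prove homotopic, with the Becker--Gottlieb transfer acting on $H^{\mathrm{odd}}(-;\C/\Q)$ as a stable map. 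You gesture at this, but your written assembly replaces it by the flawed injectivity claim, so as it stands the proof has a gap precisely at the real part.

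For comparison, the paper proves Theorem~\ref{thm1.1a} by an entirely analytic route (Section~\ref{sect3.5}): Ma and Zhang's adiabatic-limit formula (Theorem~\ref{thm8.2}) for reduced $\eta$-invariants of subsignature operators, the identities~\eqref{eq8.9} pairing against $L(TB)\,\ch(W)$ for all auxiliary bundles $W$ to extract the imaginary part by spanning $H^{\mathrm{even}}(B;\R)$, and~\eqref{eq8.10} for the real part, which in addition requires Bismut's nontrivial vanishing result $\widehat{\ch}(H(E/B;\C))=0$. Note that the paper, too, has to treat the real part by a separate and substantive argument --- further evidence that it cannot be absorbed into rational bookkeeping as your proposal attempts.
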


It is natural to ask if the same theorem holds on the level
of flat vector bundles on~$B$.
A flat vector bundle~$F\to E$, or more generally,
a bundle of finitely generated projective $R$-modules for some ring~$R$,
is classified by a map from~$E$
to the classifying space~$BGL(R)\times K_0(R)$.
Following Quillen, there is a natural map from~$BGL(R)$
to the algebraic $K$-theory space~$K(R)$.
Thus,
we may associate to~$F$ the corresponding homotopy class~$[F]$
of maps from~$E$ to~$K(R)$,
which is slightly coarser than the class of~$F$
in the $K$-theory of finitely generated projective $R$-module bundles on~$E$.

\begin{Theorem}[Dwyer, Weiss and Williams~\cite{DWW}]\label{thm1.2}
If~$p\colon E\to B$ is a bundle of smooth closed manifolds, then
\begin{equation}\label{eq1.13}
[H(E/B;F)] = tr_{E/B}^*\,[F]
\end{equation}
in the homotopy classes of maps~$B\to K(R)$.
\end{Theorem}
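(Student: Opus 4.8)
The plan is to promote the cohomological index theorem to the level of classifying maps by realizing both sides as outputs of a single construction that is visibly compatible with the Becker--Gottlieb transfer. The key conceptual input is the Dwyer--Weiss--Williams parametrized index theorem for the smooth manifold bundle~$p\colon E\to B$: the smooth generalized Euler characteristic assigns to~$p$ a map from~$B$ into a bundle of $K$-theory spaces whose fibre-homotopy type is controlled by~$A(M)$ (Waldhausen's $A$-theory of the fibre), and the Becker--Gottlieb transfer~$tr_{E/B}$ is precisely the image of this Euler characteristic under the assembly/linearization map to stable homotopy. First I would recall that a bundle of finitely generated projective $R$-modules on~$E$ is classified by a map $E\to K(R)$, and that fibrewise cohomology $H(E/B;F)$ produces a bundle of such modules on~$B$ (using finiteness of the fibres and the Gau\ss--Manin structure, though only the underlying $K$-theory class is needed here). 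The target is to identify the homotopy class of the resulting map $B\to K(R)$ with $tr_{E/B}^*[F]$.

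The heart of the argument is the following factorization. The linearization functor sending a finitely dominated space to the $R$-chains on its universal cover (twisted by a representation) induces a map of spaces $A(M)\to K(R)$, and more relevantly a natural transformation from the smooth Euler characteristic of~\cite{DWW} to the present $K$-theoretic invariant. Applying the DWW index theorem in $A$-theory --- which says that the $A$-theory Euler characteristic of a smooth bundle is the Becker--Gottlieb transfer of the $A$-theory fundamental class of~$E$, compatibly in families --- and then post-composing with linearization, one obtains exactly $tr_{E/B}^*[F]$ on the right, since $[F]\colon E\to K(R)$ is by definition the linearization of the relevant bundle of modules on~$E$. On the left, the same linearization of the smooth Euler characteristic computes the fibrewise homology/cohomology class, i.e.\ $[H(E/B;F)]$; here one uses that over a point the smooth Euler characteristic recovers the chain complex $C_*(E_x;F|_{E_x})$ up to the usual $K$-theoretic identification of a bounded complex with its alternating sum of homology, which is why the virtual bundle~\eqref{eq1.10} appears. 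Assembling these identifications along~$B$ gives the claimed equality of homotopy classes of maps $B\to K(R)$.

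The main obstacle I expect is not formal but structural: making the naturality of the linearization transformation precise enough to commute with the Becker--Gottlieb transfer at the level of spaces (not just after taking homotopy/homology), since $tr_{E/B}$ is a stable homotopy class of maps $S^\bullet B_+\to S^\bullet E_+$ and one must know that the smooth parametrized index theorem of~\cite{DWW} is genuinely a statement about a homotopy class of maps into a bundle of spectra, fixed up to contractible choice. Granting the DWW machinery this is available, the remaining points --- that passing to homology of a perfect complex is the $K$-theory transfer for the inclusion of a point, that Quillen's map $BGL(R)\times K_0(R)\to K(R)$ is compatible with direct sums and hence with the virtual construction~\eqref{eq1.10}, and that the degree-preserving nature of $tr_{E/B}^*$ observed after Theorem~\ref{thm1.1} is reflected by the transfer being a map of based spaces --- are routine once the framework is set up. A secondary subtlety is bookkeeping with the ring: one works with $R=\C$ as in the statement, but nothing in the argument uses more than flatness of $F$ producing a bundle of finitely generated projective modules, so the proof goes through for general~$R$ as indicated.
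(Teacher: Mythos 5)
Your proposal is correct and follows essentially the same route as the paper: both sides of~\eqref{eq1.13} are written as compositions with the linearization map~$\lambda_F\colon A(E)\to K(R)$, the left side via the homotopy Euler characteristic~$\chi^h(E/B)$ as in~\eqref{eq6.5}, the right side via~$\lambda_F\circ\alpha\circ\eta\circ tr_{E/B}$, and the two are identified using the Dwyer--Weiss--Williams index theorems (Theorems~\ref{thm6.1} and~\ref{thm6.2}), which is exactly your ``Euler characteristic equals transfer, then post-compose with linearization'' argument. The only cosmetic difference is that the paper splits the index theorem into the two steps~$\chi^h\simeq\alpha\circ\chi^t$ and~$\chi^t\simeq\eta\circ tr_{E/B}$, while you invoke the combined statement directly.
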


Although both sides of~\eqref{eq1.13} exist in a much more general situation,
the smooth bundle structure is needed in the proof of the theorem,
see section~\ref{sect6.1} below, in particular Theorem~\ref{thm6.3}.
Theorem~\ref{thm1.1} 
can be deduced from Theorem~\ref{thm1.2} because the class~$\cho$
can already be defined on~$K(R)$.

\subsection{A refined index theorem}\label{sect1.3} 
There is another possible refinement of Theorem~\ref{thm1.1}, where one replaces de Rham cohomology classes by differential forms. For this, one first chooses metrics~$g^{TM}$ and~$g^{F}$ on the bundles~$TM \to E$ and~$F \to E$, and a horizontal complement~$T^{H}E$ of~$TM \subset TE$. These data give rise to a natural connection~$\nabla^{TM}$ on~$TM$ by~\cite{B}. Thus, one can consider the Chern-Weil theoretic Euler form~$e(TM,\nabla^{TM})$.

We also have a natural decomposition
\begin{equation}\label{eq1.14}
\Omega^{\bullet}(E;F) = \Omega^{\bullet}(B;\Omega^{\bullet}(E/B;F))
\end{equation}
using~$TE = T^{H} E \oplus TM$, and an $L^{2}$-metric on the infinite dimensional bundle~$\Omega^{\bullet}(E/B;F) \to B$ of vertical forms twisted by~$F$. Regarding~$H^{\bullet}(E/B;F)$ as the subbundle of fibrewise harmonic forms, we get a metric~$g_{L^{2}}^{H}$ on~$H^{\bullet}(E/B;F)$. Bismut and Lott now construct a form~${\mathcal T}(T^{H}E,g^{TM},g^{F})$ on~$B$ that depends natural on the data, the {\em analytic torsion form,\/} see Section~\ref{sect2.2} below.

\begin{Theorem}[Bismut and Lott~\cite{BL}]\label{thm1.3}
  In the situation above,
  \begin{equation}\label{eq1.20}
    d\,\mathcal T\bigl(T^{H}E,g^{TM},g^{F}\bigr)
    = \int _{E/B} e\bigl(TM,\nabla^{TM}\bigr)\,\cho\bigl(F,g^{F}\bigr)
      - \cho\bigl(H,g_{L^{2}}^{H}\bigr)\;.
  \end{equation}
\end{Theorem}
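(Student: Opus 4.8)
The plan is to imitate the transgression argument that produced the Kamber–Tondeur forms themselves, but now on the total space of the fibre bundle $E\to B$, using the finite-dimensionality of de Rham cohomology along the fibres. First I would set up the superconnection formalism of Bismut–Lott: on the infinite-dimensional bundle $\Omega^{\bullet}(E/B;F)\to B$ carrying the fibrewise de Rham differential $d^{E/B}$, the flat connection $\nabla^{F}$ (in the decomposition \eqref{eq1.14}) induces a flat superconnection $A'$, and the $L^{2}$-metric $g^{L^{2}}$ coming from $g^{TM}$ and $g^{F}$ gives an adjoint superconnection $A''$. As in \eqref{eq1.7c}, one forms the odd ``fibrewise Kamber–Tondeur form''
\begin{equation*}
\omega\bigl(\Omega^{\bullet}(E/B;F),g^{L^{2}}\bigr)=A''-A',
\end{equation*}
which is a superconnection-valued $1$-form, and then the analogue of $\cho(F,g^{F})$ is a sum of traces of odd powers of this object, regularised by the heat kernel of the fibrewise Laplacian. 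Formally, $d$ applied to this regularised expression should give the difference of the ``Chern characters'' of $A'$ and of $A''$; but these are not differential forms on $B$ directly, only after taking $t\to\infty$ (harmonic forms) and $t\to 0$ (local index) limits.

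The heart of the proof is therefore a transgression in the rescaling parameter $t$. I would introduce the rescaled superconnection $\mathbb{B}_{t}=A''+\sqrt{t}(A'+A'')/2$ or the Bismut–Lott variant, and define $\mathcal T(T^{H}E,g^{TM},g^{F})$ as an integral over $t\in(0,\infty)$ of a derived form $f^{\wedge}(\mathbb{B}_{t})$, with the usual subtraction of the limiting terms to make the integral converge at both ends. Then $d\mathcal T$ telescopes into the difference of the boundary contributions:
\begin{equation*}
d\,\mathcal T = \lim_{t\to 0}(\cdots) - \lim_{t\to\infty}(\cdots).
\end{equation*}

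The two main analytic inputs are: (i) as $t\to\infty$, the fibrewise heat operator projects onto fibrewise harmonic forms, so the limiting term becomes $\cho(H,g^{H}_{L^{2}})$ computed on the finite-rank bundle $H^{\bullet}(E/B;F)$ with its $L^{2}$-metric and Gauss–Manin connection $\nabla^{H}$; (ii) as $t\to 0$, a local index computation — the fibrewise analogue of the Mathai–Quillen / Bismut local family index theorem, applied to the odd Chern–Simons transgression rather than to the even Chern character — shows that the limiting term is the fibre integral of $e(TM,\nabla^{TM})$ against the local expression for $\cho(F,g^{F})$. This is exactly the right-hand side of \eqref{eq1.20}. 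The convergence at $t\to 0$ and the identification of the constant term with $e(TM,\nabla^{TM})\,\cho(F,g^{F})$ use the Berezin/supertrace formalism: the Euler form appears as the Pfaffian produced by the fibrewise Clifford variables, while $\cho(F,g^{F})$ is the odd ``defect'' measuring the failure of $g^{F}$ to be parallel. These are precisely the fibre-bundle analogues of \eqref{eq1.5}–\eqref{eq1.7c}.

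The step I expect to be the main obstacle is the small-time asymptotics in (ii): one must show that $f^{\wedge}(\mathbb{B}_{t})$ has an asymptotic expansion as $t\to 0$ with no negative powers of $t$ (so the $t$-integral converges near $0$ after subtracting the constant term) and then identify that constant term. This requires Getzler-type rescaling adapted to the \emph{odd} situation — the differential form $\omega(F,g^{F})$ plays the role of an ``odd Higgs field'' and one must track how it survives the rescaling limit alongside the Euler-class Pfaffian. Everything else — the large-time limit, the definition of $\mathcal T$ as a convergent integral, and the bookkeeping of the transgression — is a routine (if lengthy) adaptation of the Bismut–Lott superconnection machinery, and I would simply cite \cite{BL} for the precise form of $\mathcal T$ rather than reproduce it.
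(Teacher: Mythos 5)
Your proposal is correct and follows essentially the same route as the paper: the Bismut--Lott superconnection transgression on $\Omega^{\bullet}(E/B;F)$, with the closedness of the odd supertrace form $\str\bigl(\tilde{\X}e^{\tilde{\X}^{2}}\bigr)$ on $B\times(0,\infty)$ yielding $d\,\mathcal T$ as the difference of the $t\to0$ limit (Getzler rescaling, giving $\int_{E/B}e(TM,\nabla^{TM})\,\cho(F,g^{F})$) and the $t\to\infty$ limit (projection onto fibrewise harmonic forms, giving $\cho(H,g_{L^{2}}^{H})$), exactly as in Section~\ref{sect2.2} and the proof via \eqref{eq2.22}, \eqref{eq2.23} and \eqref{eq2.25}. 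The only cosmetic difference is your ad hoc formula for the rescaled superconnection, which should be the rescaling \eqref{eq2.20} of \cite{BL}, but since you defer to \cite{BL} for the precise form this does not affect the argument.
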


In the theory of flat vector bundles, this result plays the same role as the $\eta$-forms in the heat kernel proof of the classical family index
theorem~\cite{B}, \cite{BGV}, see also~\cite{BCeta} and~\cite{Dai}.
The holomorphic torsion forms similarly arise in a double transgression
formula~\cite{BiK} in the Riemann-Roch-Grothendieck theorem
for proper holomorphic submersions in K\"ahler geometry.
This analogy with $\eta$-forms and holomorphic torsion forms has
inspired most of the constructions and results of the following two sections.

\section{Construction of the Bismut-Lott torsion}\label{sect2}
In this section, we recall the construction of the torsion forms occurring in Theorem~\ref{thm1.3}. As in~\cite{BL}, we start with a finite-dimensional toy model that will be of independent interest. We then present the original construction of~${\mathcal T}(T^{H}M,g^{TM},g^{F})$ by Bismut and Lott, and also a construction using $\eta$-forms by Ma and Zhang.

\subsection{A finite-dimensional model}\label{sect2.1}
Consider flat vector bundles~$V^{k} \to M$ and parallel vector bundle homomorphisms~$a_{k} \colon V^{k} \to V^{k+1}$, such that
\begin{equation}\label{eq2.1}
  \begin{CD}
    0@>>>V^0@>a_0>>V^1@>a_1>>\cdots@>a_{n-1}>>V^n@>>>0
  \end{CD}
\end{equation}
forms a cochain complex over each point in~$M$. Then
\begin{equation}\label{eq2.2}
A' = \nabla^{V} + a 
\end{equation}
is a superconnection, which is flat because
\begin{equation}\label{eq2.3}
(A')^{2} = a^{2} + \bigl[\nabla^{V},a\bigr] + \bigl(\nabla^{V}\bigr)^{2}\;,
\end{equation}
and each term on the right hand side vanishes by assumption. We will call the pair~$(V,\nabla^{V}+a)$ a {\em parallel family of\/} (finite-dimensional) {\em cochain complexes}. 

If we fix a metric~$g^{V^k}$ on each~$V^{k}$, we can consider the adjoint connection~$\nabla^{V,*}$ as in~\eqref{eq1.4}, and let~$a_{k}^{*}\colon V^{k+1} \to V^{k}$ be the adjoint of~$a_{k}$ with respect to~$g^{V^k}$ and~$g^{V^{k+1}}$. Then we obtain another flat superconnection
\begin{equation}\label{eq2.4}
A'' = \nabla^{V,*} + a^{*}.
\end{equation}
As in Hodge theory, the fibrewise cohomology of~$(V,a)$ is represented by~$H = \ker(a+a^{*}) \subset V$. Projection of~$\nabla^{V}$ onto~$H$ defines a connection~$\nabla^{H}$ on~$H$. One checks that~$\nabla^{H}$ is independent of~$g^{V}$, and in fact, $\nabla^{H}$ is the natural Gau\ss-Manin connection.
Let~$g_{V}^{H}$ denote the restriction of~$g^{V}$ to~$H$.

Bismut and Lott then define a differential form~$T(\nabla^{V}+a,g^{V}) \in \Omega^{\mathrm{even}}(M)$ and obtain a finite-dimensional analogue of Theorem~\ref{thm1.3}. 

\begin{Theorem}[Bismut and Lott, \cite{BL}]\label{thm2.1} In the situation above, 
\begin{equation}\label{eq2.5}
dT\bigl(\nabla^{V}+a,g^{V}\bigr) = \cho\bigl(V,g^{V}\bigr) - \cho\bigl(H,g_{V}^{H}\bigr)\;.
\end{equation}
\end{Theorem}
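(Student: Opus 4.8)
The plan is to prove~\eqref{eq2.5} by a transgression argument in an auxiliary rescaling parameter $u\in(0,\infty)$. This is the finite-dimensional shadow of the heat-kernel proof of Theorem~\ref{thm1.3}, the essential simplification being that in finite dimensions no genuine local index computation enters.

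First I would introduce, for $u>0$, the rescaled superconnections
\begin{equation*}
  A'_u=\nabla^V+\sqrt u\,a\;,\qquad A''_u=\nabla^{V,*}+\sqrt u\,a^*\;.
\end{equation*}
Both are flat, since $a^2=0$, $[\nabla^V,a]=0$, $(\nabla^V)^2=0$ and the corresponding identities hold for the adjoints, cf.~\eqref{eq2.3}, and $A''_u$ is the $g^V$-adjoint of $A'_u$. Hence, exactly as in~\eqref{eq1.3}--\eqref{eq1.5}, transgression along the interpolation from $A'_u$ to $A''_u$ produces a real odd form
\begin{equation*}
  \cho\bigl(A'_u,g^V\bigr):=\pi i\,\widetilde{\ch}\bigl(V,A'_u,A''_u\bigr)\in\Omega^{\mathrm{odd}}(M;\R)\;,
\end{equation*}
where $\widetilde{\ch}$ is formed with the supertrace on the $\Z$-graded bundle $V$; accordingly $\cho$ of a graded bundle is read as an alternating sum, so $\cho(A'_0,g^V)=\cho(V,g^V)=\sum_k(-1)^k\cho(V^k,g^{V^k})$, and similarly $\cho(H,g_V^H)$ is the alternating sum over the $H^k$. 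As in~\eqref{eq1.6}, flatness of $A'_u$ and $A''_u$ forces $\ch(A'_u)=\ch(A''_u)=\chi(V)$, so $\cho(A'_u,g^V)$ is closed. Now I would transgress a second time, in $u$: over $M\times(0,\infty)$ --- the step where the number operator $N_V$, acting by $k$ on $V^k$, enters, as in Bismut and Lott's $f$-forms --- one obtains an even form $\gamma_u$ on $M$ with
\begin{equation*}
  \frac{\partial}{\partial u}\,\cho\bigl(A'_u,g^V\bigr)=d\,\gamma_u\;,
\end{equation*}
and the Bismut--Lott torsion form $T(\nabla^V+a,g^V)$ is $-\int_0^\infty\gamma_u\,du$, up to closed renormalisation terms subtracted near $u=0$ and $u=\infty$ to make the integral converge.

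It then remains to compute the two endpoint limits of $\cho(A'_u,g^V)$. At $u=0$ the superconnections degenerate to the ordinary connections $\nabla^V$, $\nabla^{V,*}$, so $\cho(A'_0,g^V)=\cho(V,g^V)$, and everything is smooth in $\sqrt u$ there. As $u\to\infty$ the operator $\sqrt u\,(a+a^*)$ blows up on the orthogonal complement $H^\perp=\im a\oplus\im a^*$ of $H=\ker(a+a^*)$, so the transgressed form localises onto $H$; since the connection induced on $H$ is the orthogonal projection of $\nabla^V$, which is precisely the Gau\ss--Manin connection $\nabla^H$ and independent of $u$, one gets $\lim_{u\to\infty}\cho(A'_u,g^V)=\cho(H,g_V^H)$, with exponentially small error because $(a+a^*)^2$ has a spectral gap. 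Granting this, integrating the transgression identity over $\epsilon\le u\le R$ gives
\begin{equation*}
  \cho\bigl(A'_R,g^V\bigr)-\cho\bigl(A'_\epsilon,g^V\bigr)=d\int_\epsilon^R\gamma_u\,du\;,
\end{equation*}
and letting $\epsilon\to0$, $R\to\infty$ turns the left-hand side into $\cho(H,g_V^H)-\cho(V,g^V)$ and, after renormalisation, the right-hand side into $-dT$; the subtracted terms are closed and so drop out under $d$. This is~\eqref{eq2.5}.

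The one genuinely technical point is the large-$u$ localisation above. One must split $V=H\oplus H^\perp$, show that the off-diagonal $H$--$H^\perp$ blocks of the interpolating superconnection contribute only terms that vanish as $u\to\infty$, and verify that the surviving $H$-block is exactly $\nabla^H$ with the metric $g_V^H$. In finite dimensions this amounts to elementary estimates on the spectrum of $(a+a^*)^2$ rather than a heat-kernel parametrix analysis, which is why Theorem~\ref{thm2.1} is much cheaper than Theorem~\ref{thm1.3}; replacing this step by a genuine local family index computation for the Bismut--Lott superconnection over a smooth fibre bundle is precisely what yields Theorem~\ref{thm1.3}.
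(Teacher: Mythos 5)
Your proposal is correct and follows essentially the same route as the paper: a double transgression in the rescaling parameter, where the slice form (the Kamber--Tondeur/Chern--Simons form of the rescaled flat superconnections $\nabla^V+\sqrt t\,a$ and $\nabla^{V,*}+\sqrt t\,a^*$) is closed, its $t\to0$ limit gives $\cho(V,g^V)$, its $t\to\infty$ limit localises by the spectral gap of $(a+a^*)^2$ onto $H$ with the Gau\ss--Manin connection giving $\cho(H,g_V^H)$, and the $t$-integral of the transgression term, after subtracting closed (constant) counterterms, is exactly $-T(\nabla^V+a,g^V)$. The paper packages the two transgressions into one closed odd form $\str_V(\tilde Xe^{\tilde X^2})$ on $M\times(0,\infty)$ (with the $N^V\,dt/2t$ terms) and quotes the large-time limit from Bismut--Lott, which is precisely the localisation step you flag as the technical point, so the arguments coincide up to normalisation bookkeeping.
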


The core of the proof is the construction of~$T(\nabla^{V}+a,g^{V})$ that we now describe. On the pullback~$\tilde{V}$ of~$V$ to~$\tilde M=M \times (0,\infty)$, we introduce two flat superconnections 
\begin{equation}\label{eq2.6}
\begin{aligned}
\tilde{A}' &= \nabla^{V} + \sqrt{t} a - \frac{N^{V}}{2t}\, dt\;, \\
\tilde{A}'' &= \nabla^{V,*} + \sqrt{t} a^{*} + \frac{N^{V}}{2t}\, dt\;,
\end{aligned}
\end{equation}
where~$N^{V} \in \operatorname{End} V$ acts on~$V^{k}$ as multiplication by~$k$. The difference of the two superconnections above is an endomorphism
\begin{equation}\label{eq2.7}
\tilde{X} = \tilde{A}'' - \tilde{A}' = \omega\bigl(V,g^{V}\bigr) + \sqrt{t}(a^{*}-a) + \frac{N^{V}}{t}\, dt \in \Omega^{\bullet}\bigl(\tilde M,\operatorname{End}\tilde{V}\bigr)\;.
\end{equation}
We also define the supertrace by
\begin{equation}\label{eq2.8}
\str_{V} = \tr_{V} \circ (-1)^{N^V}\colon \Omega^{\bullet}(\punkt,\operatorname{End} V) \to \Omega^{\bullet}(\punkt)\;.
\end{equation}
For convenience, we stick to the conventions of~\cite{BL}. In analogy with~\eqref{eq1.7d}, the form
\begin{equation}\label{eq2.9}
(2 \pi i)^{\frac{1-N^M}{2}}\str{V}\Bigl(\tilde{X}e^{\tilde{X}^{2}}\Bigr) \in \Omega^{\mathrm{odd}}(\tilde M)
\end{equation}
is real, odd and closed. By~\eqref{eq2.7}, we have
\begin{equation}\label{eq2.10}
\lim_{t \to 0} \str{V}\left(\tilde{X}e^{\tilde{X}^{2}}\right)\Big|_{M \times \{t\}} = \str_{V}\left(\omega\bigl(V,g^{V}\bigr)e^{\omega(V,g^{V})^{2}}\right)\;.
\end{equation}
To understand the limit for~$t \to \infty$, note that~$a^{*} - a$ is a skew-adjoint operator. In particular, the ``finite dimensional Laplacian''~$-(a^{*}-a)^{2}$ has nonnegative eigenvalues, and its kernel is given by the ``harmonic elements''~$H$. In particular, the ``heat operator''~$e^{t(a^{*}-a)^{2}}$ converges to the orthogonal projection onto~$H$ as~$t$ tends to infinity. More generally, it is proved in~\cite{BL} that
\begin{equation}\label{eq2.11}
\lim_{t \to \infty}\str_{V}\left(\tilde{X}e^{\tilde{X}^{2}}\right)\Big|_{M \times\{t\}} = \str_{H} \left(\omega\bigl(H,g_{V}^{H}\bigr)e^{\omega(H,g_{V}^{H})^{2}}\right).
\end{equation}
Because the form in~\eqref{eq2.9} is closed, the forms in~\eqref{eq2.10} and~\eqref{eq2.11} belong to the same cohomology class. Thus we have already proved a finite-dimensional version of Theorem\ref{thm1.1}. To define the torsion form, we have to integrate the form in~\eqref{eq2.9} over~$(0,\infty)$. We note that
\begin{equation}\label{eq2.12}
  \iota_{\tfrac{\partial}{\partial t}}
    \str_{V}\Bigl(\tilde{X}e^{\tilde{X}^{2}}\Bigr)\Bigr|_{M \times\{t\}}
  =\str_{V}\left(\frac{N^{V}}{t}(1+2\tilde{X}^{2})
    \,e^{\tilde{X}^{2}}\right)\biggr|_{M \times\{t\}}.
\end{equation} 
Unfortunately, the integral over~\eqref{eq2.12} diverges both for~$t \to 0$ and for~$t \to \infty$. However, the divergence can be compensated easily. For any~$\Z$-graded vector bundle~$V$, we define
\begin{equation}\label{eq2.13}
\chi(V) = \sum_{k}(-1)^{k}\rk V^{k} \quad\quad\text{and}\quad \quad \chi'(V) = \sum_{k}(-1)^{k} k \rk V^{k}.
\end{equation}
Then it is proved in~\cite{BL} that the integral
\begin{multline}\label{eq2.14}
\int _{0}^{\infty} \biggl((2\pi i)^{-\frac{N^M}{2}}\str_{V}\left(N^{V}(1+2\tilde{X}^{2})e^{\tilde{X}^{2}}\right) - \chi'(H) \\
- (\chi'(V)-\chi'(H))(1-2t)e^{-t}\biggr)\,\frac{dt}{t} \in \Omega^{\mathrm{even}}(M)
\end{multline}
converges and gives a torsion form for the characteristic classes considered in~\eqref{eq1.7d}. Adjusting the coefficients~$c_{k}$ in~\eqref{eq1.7c}, we obtain the form~$T(\nabla^{V}+a,g^{V})$ needed for Theorem~\ref{thm2.1}.

\begin{Definition}\label{Def2.2} The {\em Bismut-Lott torsion} of the parallel family of cochain complexes~$(V,\nabla^{V}+a)$ is defined as
\begin{multline}\label{eq2.15}
  T\bigl(\nabla^{V}+a,g^{V}\bigr)
  =-\int _{0}^{1}\smash{\biggl(\frac{s(1-s)}{2\pi i}\biggr)^{\tfrac{N^M}{2}}}
    \int _{0}^{\infty}\biggl(\str_{V}\left(N^{V}
      (1+2\tilde{X}^{2})e^{\tilde{X}^{2}}\right) \\
  - \chi'^{H} -(\chi'(V)-\chi'(H))(1-2t^{2})e^{-t^{2}}\biggr)\,\frac{dt}{2t}\, ds
    \quad\in \Omega^{\mathrm{even}}(M).
\end{multline}
\end{Definition}

\begin{proof}[Proof of Theorem~\ref{thm2.1}]
  Let~$N^{M}$ act on~$\Omega^{k}(M)$ as multiplication by~$k$.
  Because the form~\eqref{eq2.9} is closed, it follows from~\eqref{eq2.10} and~\eqref{eq2.11} that
  \begin{equation}\label{eq2.16}
    \begin{aligned}
      dT\bigl(\nabla^{V}+a,g^{V}\bigr) &= \frac{1}{2} \int _{0}^{1}
        \smash{\left(\frac{s(1-s)}{2\pi i}\right)^{\tfrac{N^{M}-1}{2}}}
	\biggl(\lim_{t\to 0} \str_{V}\left(\tilde{X}e^{\tilde{X}^{2}}\right)
	\Bigr|_{M \times \{t\}} \\
      &\kern6em- \lim_{t \to \infty} \str_{V}\left(\tilde{X}e^{\tilde{X}^{2}}\right)\Bigr|_{M \times \{t\}}\biggr) \\
      &= \cho\bigl(V,g^{V}\bigr) - \cho\bigl(H,g_{V}^{H}\bigr). \qedhere
    \end{aligned}
  \end{equation}
\end{proof}

\begin{Remark}\label{Rem2.3}
The correction terms in Definition~\ref{Def2.2} are constant and only affect the Bismut-Lott torsion in degree 0. They are chosen such that
\begin{equation}\label{eq2.17}
\begin{aligned}
T\bigl(\nabla^{V}+a,g^{V}\bigr)^{[0]}_{x} &= \frac{1}{2}\sum_{k}(-1)^{k} k \log \det\left(-(a^{*}-a)^{2}\big|_{V^{k}\cap H^{k\perp}}\right) \\
&= \frac{1}{2} \sum_{k}(-1)^{k}\log \det\left(aa^{*}\big|_{V^{k}\cap \im a}\right).
\end{aligned}
\end{equation}
But this is just one way to represent the Franz-Reidemeister torsion of the cochain complex~$(V_{x},a)$ with metric~$g^{V}$ for~$x \in M$. Hence~$T(\nabla^{V}+a,g^{V})$ is called a ``higher torsion form''.
\end{Remark}

\subsection{The Bismut-Lott torsion form}\label{sect2.2}
As in~\cite{BL}, section 3, we now translate the construction of~$T(\nabla^{V}+a,g^{V})$ to the infinite-dimensional family of fibrewise de Rham complexes.

Let~$p \colon E \to B$ be a smooth proper submersion with typical fibre~$M$, and let~$TM = \ker dp \subset TE$. As in Section~\ref{sect1.3}, we fix~$T^{H}E \subset TE$ such that~$TE = TM \oplus T^{H}E$. Because~$T^{H}E \cong p^{*}TB$, we can identify vector fields on~$B$ with their pullback to~$E$, which we call basic vector fields.

Let~$F \to E$ be a flat vector bundle, then we may regard the flat connection~$\nabla^{F}$ as a differential on the total complex~$\Omega^{\bullet}(E;F)$. Using the splitting~\eqref{eq1.14}, we may also regard~$\nabla^{F}$ as a superconnection on the infinite-dimensional bundle~$\Omega^{\bullet}(E/B;F) \to B$ with
\begin{equation}\label{eq2.18}
{\mathbb A}' = \nabla^{F} = d^{M} + \nabla^{\Omega^{\bullet}(E/B;F)} + \iota_{\Omega}
\end{equation}
by~\cite{BGV}. Here, $d^{M}$ denotes the fibrewise differential on~$\Omega^{\bullet}(E/B;F)$, $\nabla^{\Omega^{\bullet}(E/B;F)}$ is the connection induced by the Lie derivative by basic vector fields, and~$\Omega$ is the vertical component of the Lie bracket of two basic vector fields on~$E$. 

In analogy with~\eqref{eq2.4}, we also define an adjoint superconnection
\begin{equation}\label{eq2.19}
{\mathbb A}'' = d^{M,*} + \nabla^{\Omega^{\bullet}(E/B;F),*}+ \eps_{\Omega} 
\end{equation}
with respect to the fibrewise~$L^{2}$-metric~$g_{L^{2}}$
on~$\Omega^{\bullet}(E/B;F)$.
Let~$\tilde{B} = B \times (0,\infty)$, $\tilde{E} = E \times (0,\infty)$
and~$\tilde{F} = F \times (0,\infty)$,
and let~$t$ be the coordinate of~$(0,\infty)$.
Then we define superconnections
\begin{equation}\label{eq2.20}
  \begin{aligned}
    \tilde{{\mathbb A}}'
    &= \sqrt{t}\, d^{M}
      + \nabla^{\Omega^{\bullet}(\tilde{E}/\tilde{B};\tilde{F})}
      + \frac{1}{\sqrt{t}}\,\iota_{\Omega}
      - \frac{N^{\tilde{E}/\tilde{B}}}{2t}\, dt\;, \\
    \tilde{{\mathbb A}}''
    &= \sqrt{t}\, d^{M,*}
      + \nabla^{\Omega^{\bullet}(\tilde{E}/\tilde{B};\tilde{F}),*}
      + \frac{1}{\sqrt{t}}\,\eps_{\Omega}
      + \frac{N^{\tilde{E}/\tilde{B}}}{2t}\, dt\;,
  \end{aligned}
\end{equation}
where now~$N^{\tilde{E}/\tilde{B}}$ acts on~$\Omega^{k}(\tilde{E}/\tilde{B};\tilde F)$ as multiplication by~$k$. Then
\begin{multline}\label{eq2.21}
  \tilde{\X} = \tilde{\A}'' - \tilde{\A}'
  = \sqrt{t}\bigl(d^{M,*}-d^{M}\bigr)
    +\omega\bigl(\Omega^{\bullet}(\tilde{E}/\tilde{B};\tilde F),g_{L^2}\bigr)\\
    + \frac{1}{\sqrt{t}}(\eps_{\Omega}-\iota_{\Omega})
    + \frac{N^{\tilde{E}/\tilde{B}}}{t}\, dt
  \quad\in \Omega^{\bullet}\bigl(\tilde{B};
    \operatorname{End}\Omega^{\bullet}(\tilde{E}/\tilde{B};\tilde{F})\bigr).
\end{multline}
Note that~$d^{M,*} - d^{M}$ is a skew-adjoint fibrewise elliptic differential operator, whereas the other terms on the right hand side involve no differentiation at all. The operator~$-\tilde{\X}^{2}$ can be regarded as a generalised Laplacian along the fibres of~$p$. If the metric~$g^{F}$ is parallel along the fibres, then~$-\tilde{\X}^{2}$ is precisely the curvature of the Bismut superconnection, which already appeared in the heat equation proof of the Atiyah-Singer families index theorem~\cite{B}. In particular, the fibrewise odd heat operator~$\tilde{\X}e^{\tilde{X}^{2}}$ is well-defined and of trace class. Using Getzler rescaling, one proves
\begin{multline}\label{eq2.22}
  \lim_{t \to 0} \str_{\Omega^{\bullet}(\tilde{E}/\tilde{B};\tilde{F})}
    \Bigl(\tilde{\X}e^{\tilde{\X}^{2}}\Bigr)\Bigr|_{B \times \{t\}}\\
  =\int _{E/B} e\bigl(TM,\nabla^{TM}\bigr)
    \,\str\left(\omega\bigl(F,g^{F}\bigr)e^{\omega(F,g^{F})^{2}}\right)
\end{multline}
in analogy with~\eqref{eq2.10}. Similarly, if we identify~$H = H^{\bullet}(E/B;F)$ with the fibrewise harmonic differential forms, equipped with the restriction~$g_{L^{2}}^{H}$ of the $L^{2}$-metric on~$\Omega^{\bullet}(E/B;F)$, then
\begin{equation}\label{eq2.23}
\lim_{t \to \infty} \str_{\Omega^{\bullet}(\tilde{E}/\tilde{B};\tilde{F})}\left(\tilde{\X}e^{\tilde{\X}^{2}}\right)\big|_{B \times \{t\}} = \str_{H}\left(\omega\bigl(H,g_{L^{2}}^{H}\bigr)e^{\omega(H,g_{L^{2}}^{H})^{2}}\right)
\end{equation}
as in~\eqref{eq2.11}. To obtain the torsion form, we have to take care of some divergent terms and of the coefficients in~\eqref{eq1.7c} as before.

\begin{Definition}\label{Def2.4}
The Bismut-Lott torsion is defined as
\begin{multline}\label{eq2.24}
{\mathcal T}\bigl(T^{H}E,g^{TM},g^{F}\bigr)\\
= -\int _{0}^{1}\left(\frac{s(1-s)}{2\pi i}\right)^{\tfrac{N^{B}}{2}} \int _{0}^{\infty}\biggl(\str_{\Omega^{\bullet}(\tilde{E}/\tilde{B};\tilde{F})}\left(N^{\tilde{E}/\tilde{B}}\bigl(1+2\tilde{\X}^{2}\bigr)e^{\tilde{\X}^{2}}\right)- \chi'(H)  \\
- \Bigl(\frac{\chi(M)\,\dim M \rk F}{2} - \chi'(H)\Bigr)(1-2t)e^{-t}\biggr)\,\frac{dt}{2t}\, ds \quad\in \Omega^{\mathrm{even}}(B)\;.
\end{multline}
\end{Definition}

\begin{proof}[Proof of Theorem~\ref{thm1.3}] As in the proof of Theorem~\ref{thm2.1}, this follows from~\eqref{eq2.22} and~\eqref{eq2.23}, because the form
\begin{equation}\label{eq2.25}
\str_{\Omega^{\bullet}(\tilde{E}/\tilde{B};\tilde{F})}\left(\tilde{\X} e^{\tilde{\X}^{2}}\right) \in \Omega^{\bullet}(B \times (0,\infty);\C)
\end{equation}
is closed. 
\end{proof}

\begin{Remark}\label{Rem2.5} Again, the correction terms in~\eqref{eq2.24} are constant scalars. They are chosen such that
\begin{multline}\label{eq2.26}
{\mathcal T}\bigl(T^{H}E,g^{TM},g^{F}\bigr)_{x}^{[0]}\\
 = \frac{1}{2} \sum_{k=0}^{\dim M}(-1)^{k} k \log\operatorname{Det}\left(-\bigl(d^{M,*}-d^{M}\bigr)^{2}\big|_{\Omega^{k}(M_{x};F)\cap H^{k\perp}}\right)\;,
\end{multline}
where ``Det'' denotes a zeta-regularised determinant.
The right hand side is precisely the Ray-Singer analytic torsion
of the fibre~$M_{x}$.
Hence~${\mathcal T}(T^{H}E, g^{TM},g^{F})$ is called a Bismut-Lott torsion form.
\end{Remark}

\subsection{Elementary Properties}\label{sect2.3}
From Theorem~\ref{thm1.3}, one can derive a variation formula for Bismut-Lott torsion. If we choose~$T_{j}^{H}E$, $g_{j}^{TM}$, $g_{j}^{F}$ for~$j = 0$, $1$, let~$\nabla^{TM,j}$ denote the corresponding connections on~$TM$, and let~$g_{L^{2}}^{H,j}$ denote the corresponding $L^{2}$-metrics on~$H$. As in~\eqref{eq1.3}, there exists a Chern-Simons Euler class~$\tilde{e}(TM,\nabla^{TM,0},\nabla^{TM,1})$ such that
\begin{equation}\label{eq2.27}
  d\tilde{e}\bigl(TM,\nabla^{TM,0},\nabla^{TM,1}\bigr)
  = e\bigl(TM,\nabla^{TM,1}\bigr) - e\bigl(TM,\nabla^{TM,0}\bigr)\;.
\end{equation}

\begin{Theorem}[Bismut and Lott~\cite{BL}]\label{thm2.6} Modulo exact forms on~$B$,
\begin{multline}\label{eq2.28}
{\mathcal T}\bigl(T_{1}^{H}E,g_{1}^{TM},g_{1}^{F}\bigr) - {\mathcal T}\bigl(T_{0}^{H}E,g_{0}^{TM},g_{0}^{F}\bigr)\\
 = \int _{E/B}\Bigl(\tilde{e}\bigl(TM,\nabla^{TM,0},\nabla^{TM,1}\bigl)\,\cho\bigl(F,g_{0}^{F}\bigl)
+ e\bigl(TM,\nabla^{TM,1}\bigl)\,\chosl\bigl(F,g_{0}^{F},g_{1}^{F}\bigl)\Bigr)\\
- \chosl\left(H^{\bullet}(E/B;F),g_{L^{2}}^{H,0}, g_{L^{2}}^{H,1}\right).
\end{multline}
\end{Theorem}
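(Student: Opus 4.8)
The plan is to deduce the variation formula from Theorem~\ref{thm1.3} by the standard double-transgression argument familiar from $\eta$-forms and holomorphic torsion forms. First I would form the bundle over $B\times[0,1]$ obtained by interpolating the data: choose $T^H E$, $g^{TM}$ and $g^F$ over $E\times[0,1]$ restricting to $(T^H_jE,g^{TM}_j,g^F_j)$ at the endpoints $j=0,1$ — for instance by linearly interpolating the metrics and connecting the horizontal distributions through a path of splittings. Let $u\in[0,1]$ be the extra parameter and write $d_{B\times[0,1]}=d_B+du\wedge\partial_u$. Applying Theorem~\ref{thm1.3} to this enlarged family gives
\begin{equation*}
  d_{B\times[0,1]}\,\mathcal T\bigl(T^HE,g^{TM},g^F\bigr)
  = \int_{E\times[0,1]/B\times[0,1]} e\bigl(TM,\nabla^{TM}\bigr)\,\cho\bigl(F,g^F\bigr)
    - \cho\bigl(H,g^H_{L^2}\bigr)
\end{equation*}
over $B\times[0,1]$, where all objects now denote the interpolated ones and $\nabla^{TM}$ is the Bismut connection of the enlarged bundle.

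Next I would extract the $du$-component and integrate over $u\in[0,1]$. Writing $\mathcal T=\mathcal T^{[B]}+du\wedge\mathcal T^{[du]}$ and separating bidegrees in the displayed identity, the piece containing $du$ reads
\begin{equation*}
  \partial_u\,\mathcal T^{[B]}
  - d_B\Bigl(\int_0^1\mathcal T^{[du]}\,du\Bigr)
  = \int_0^1 \iota_{\partial_u}\Bigl(\int_{E\times[0,1]/B\times[0,1]}
      e\bigl(TM,\nabla^{TM}\bigr)\,\cho\bigl(F,g^F\bigr)
      - \cho\bigl(H,g^H_{L^2}\bigr)\Bigr)\,du\;.
\end{equation*}
Integrating the left side over $u$ telescopes to
$\mathcal T(T^H_1E,g^{TM}_1,g^F_1)-\mathcal T(T^H_0E,g^{TM}_0,g^F_0)$ modulo the exact form
$d_B\bigl(\int_0^1\mathcal T^{[du]}\,du\bigr)$. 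For the right side I would identify the two transgression terms: on the Euler-form factor, Fubini and the defining property~\eqref{eq2.27} of $\tilde e$ convert $\int_0^1\iota_{\partial_u}\,e(TM,\nabla^{TM})\,du$ into the fibre integral of $\tilde e(TM,\nabla^{TM,0},\nabla^{TM,1})\,\cho(F,g^F_0)$, while applying the same transgression to $\cho$ in the $F$-variable via~\eqref{eq1.7} produces the term $e(TM,\nabla^{TM,1})\,\chosl(F,g^F_0,g^F_1)$; one must keep track of the cross term where both factors vary, which up to exact forms can be absorbed since $e(TM,\nabla^{TM})$ and $\cho(F,g^F)$ are closed on $B\times[0,1]$. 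The final term $-\int_0^1\iota_{\partial_u}\cho(H,g^H_{L^2})\,du$ is exactly $-\chosl(H^\bullet(E/B;F),g^{H,0}_{L^2},g^{H,1}_{L^2})$ by~\eqref{eq1.7} applied to the Gauss–Manin bundle with its two $L^2$-metrics.

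The main obstacle I expect is not the formal transgression bookkeeping but the analytic input hidden behind Theorem~\ref{thm1.3}: one needs Theorem~\ref{thm1.3} to hold uniformly in the parameter $u$, i.e. the construction of $\mathcal T(T^HE,g^{TM},g^F)$ in Definition~\ref{Def2.4} must be smooth in $u$, with the limits~\eqref{eq2.22} and~\eqref{eq2.23} and the convergence of the $t$-integral holding locally uniformly in $u$ so that $\partial_u$ and the $t$-integral commute. This requires control of the small-$t$ Getzler-rescaling asymptotics and the large-$t$ behaviour of $\str_{\Omega^\bullet}(\tilde\X e^{\tilde\X^2})$ as the metrics and horizontal distribution vary — in particular the spectral gap of the fibrewise Laplacian $-(d^{M,*}-d^M)^2$ away from zero stays uniform, which is automatic by compactness of $[0,1]$. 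Granting this, the argument is the same double transgression used for Theorem~\ref{thm1.1a} and its $\eta$-form analogue, and the identity holds modulo the exact form $d_B\bigl(\int_0^1\mathcal T^{[du]}\,du\bigr)$ on $B$ as stated. I would also remark that the resulting formula is independent of the chosen interpolation, since two interpolations differ by a further transgression which only changes the answer by an exact form.
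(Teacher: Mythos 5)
Your proposal is correct and is essentially the paper's own route: the paper simply notes that Theorem~\ref{thm2.6} is a direct consequence of Theorem~\ref{thm1.3}, and your double transgression over $B\times[0,1]$ with the Chern--Simons product identity (giving $\tilde e\,\cho(F,g^F_0)+e(TM,\nabla^{TM,1})\,\chosl(F,g^F_0,g^F_1)$ and $\chosl(H,g^{H,0}_{L^2},g^{H,1}_{L^2})$) is exactly how that deduction goes. The analytic uniformity in $u$ you worry about is automatic, since Theorem~\ref{thm1.3} is applied directly to the enlarged smooth proper submersion $E\times[0,1]\to B\times[0,1]$, with functoriality under restriction to $B\times\{j\}$ giving the endpoint identification.
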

A variation formula like this has already been proved for the Ray-Singer torsion in~\cite{BZ1}.
Theorem~\ref{thm2.6} is a direct consequence of Theorem~\ref{thm1.3}.
Similar variation formulas exist for $\eta$-forms~\cite{BC}
and holomorphic torsion forms~\cite{BiK}.

\begin{Corollary}[Bismut and Lott~\cite{BL}]\label{Cor2.7} If the fibres of~$p \colon E \to B$ are odd-dimensional and~$F \to E$ is fibrewise acyclic, then~${\mathcal T}(T^{H}E,g^{TM},g^{F})$ defines an even cohomology class on~$B$ that is independent of the choices of~$T^{H}E$, $g^{TM}$ and~$g^{F}$.
\end{Corollary}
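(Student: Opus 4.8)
The plan is to deduce Corollary~\ref{Cor2.7} directly from the variation formula in Theorem~\ref{thm2.6}, after first observing that Theorem~\ref{thm1.3} shows $\mathcal T(T^HE,g^{TM},g^F)$ is \emph{closed} in the relevant case. First I would note that when $F\to E$ is fibrewise acyclic, the bundle $H=H^\bullet(E/B;F)$ is the zero bundle, so $\cho(H,g_{L^2}^H)=0$ and $\chosl(H,g_{L^2}^{H,0},g_{L^2}^{H,1})=0$ identically. Moreover, when $\dim M$ is odd, the Euler form $e(TM,\nabla^{TM})\in\Omega^{\dim M}(E;o(TM)\otimes\R)$ vanishes, since the Pfaffian of the curvature of an odd-rank (odd-dimensional fibre) bundle is zero; likewise the Chern--Simons Euler class $\tilde e(TM,\nabla^{TM,0},\nabla^{TM,1})$ lives in $\Omega^{\dim M-1}$ but is built from the same Pfaffian construction and vanishes in odd fibre dimension as well (it is a transgression of $e$, hence also identically zero here). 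Therefore the right-hand side of~\eqref{eq1.20} is $0$, so $d\,\mathcal T(T^HE,g^{TM},g^F)=0$ and $\mathcal T$ defines a de Rham cohomology class on $B$; since $\mathcal T$ is an even-degree form, this class lies in $H^{\mathrm{even}}(B;\R)$.

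Next I would prove independence of the choices using Theorem~\ref{thm2.6}. With $H=0$ and $\dim M$ odd, every term on the right-hand side of~\eqref{eq2.28} vanishes: the first summand contains $\tilde e(TM,\nabla^{TM,0},\nabla^{TM,1})=0$, the second contains $e(TM,\nabla^{TM,1})=0$, and the third is $\chosl(H,\dots)=0$ because $H=0$. Hence $\mathcal T(T_1^HE,g_1^{TM},g_1^F)-\mathcal T(T_0^HE,g_0^{TM},g_0^F)$ is exact, so the two closed forms represent the same class in $H^{\mathrm{even}}(B;\R)$. This gives independence of $(T^HE,g^{TM},g^F)$ for any two choices connected as in the setup of Theorem~\ref{thm2.6}; since the space of such data is affine (convex) in each variable, any two choices are so connected, completing the argument.

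The only genuinely delicate point, and the one I would be most careful about, is the vanishing of the Chern--Simons Euler class $\tilde e(TM,\nabla^{TM,0},\nabla^{TM,1})$ in odd fibre dimension. Unlike $e$ itself, which is visibly a Pfaffian of a $(\dim M)\times(\dim M)$ skew matrix and hence zero when $\dim M$ is odd, $\tilde e$ is obtained by the transgression construction~\eqref{eq1.3} applied on $E\times[0,1]$: one forms $e$ of an interpolating connection on the natural extension $\widetilde{TM}$, contracts with $\partial/\partial t$, and integrates. Since the fibre rank is still $\dim M$ (odd) on $E\times[0,1]$, the Pfaffian defining that Euler form is identically zero \emph{before} contraction and integration, so $\tilde e=0$ as well. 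I would state this explicitly rather than leave it implicit, since it is exactly the place where the odd-dimensionality hypothesis is used in the variation part of the argument (the acyclicity hypothesis alone handles the $H$-terms, but not the Euler-class terms). With that observation in hand, the corollary is an immediate formal consequence of Theorems~\ref{thm1.3} and~\ref{thm2.6}.
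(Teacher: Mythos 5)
Your proposal is correct and follows essentially the same route the paper intends: closedness from Theorem~\ref{thm1.3} (both terms on the right of~\eqref{eq1.20} vanish since $e(TM,\nabla^{TM})=0$ for odd-dimensional fibres and $H=0$ by acyclicity), and independence of choices from Theorem~\ref{thm2.6}, whose right-hand side vanishes term by term. Your careful justification that $\tilde e(TM,\nabla^{TM,0},\nabla^{TM,1})=0$ in odd fibre dimension --- because the transgression is built from the Euler form of the interpolating connection on the pulled-back odd-rank bundle over $E\times[0,1]$, which vanishes identically before contraction and integration --- is exactly the point that needs to be made, and it is right.
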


There is another situation where~${\mathcal T}(T^{H}E,g^{TM},g^{F})$ defines a cohomology class, at least its higher degree components. Assume that~$g_{0}^{F}$ and~$g_{1}^{F}$ are both parallel with respect to~$\nabla^{F}$. Then
\begin{equation}\label{eq2.29}
g_{t}^{F} = (1-t)g_{0}^{F} + t g_{1}^{F}
\end{equation}
is a parallel metric on~$F$ for all~$t \in [0,1]$. Put the metric~$g^{\tilde{F}}|_{F \times \{t\}} = g_{t}^{F}$ on the pullback~$\tilde{F}$ to~$\tilde{E} = E \times [0,1]$, then
\begin{equation}\label{eq2.30}
\omega\bigl(\tilde{F},g^{\tilde{F}}\bigr) = (g_{t}^{F})^{-1} \frac{\partial}{\partial t} g_{t}^{F}\, dt \in \Omega^{1}(E \times [0,1];\operatorname{End} \tilde{F})
\end{equation}
because~$\omega(\tilde{F},g^{\tilde{F}})|_{E \times \{t\}} = 0$ by~\eqref{eq1.7c}. In particular
\begin{equation}\label{eq2.31}
\chosl\bigl(F,g_{0}^{F},g_{1}^{F}\bigr) = c_{0}\int _{0}^{1}\tr_{F}\left((g_{t}^{F})^{-1} \frac{\partial}{\partial t} g_{t}^{F}\right)\, dt \in \Omega^{0}(E)
\end{equation}
is in fact just a constant function on~$E$.

\begin{Definition}\label{Def2.8} If the bundles~$F \to E$ and~$H(E/B;F) \to B$
admit parallel metrics~$g^{F}$ and~$g^{H}$,
one defines the higher analytic torsion or {\em Bismut-Lott torsion\/} as
  \begin{equation}\label{eq2.31a}
    {\mathcal T}(E/B;F)
    = {\mathcal T}\bigl(T^{H}E,g^{TM},g^{F}\bigr)^{[\ge 2]}
      + \chosl\bigl(H,g^{H},g_{L^{2}}^{H}\bigr)^{[\ge 2]}
    \quad\in \Omega^{\ge 2}(B)\;.
  \end{equation}
\end{Definition}

It follows from Theorems~\ref{thm1.3} and~\ref{thm2.6} that~${\mathcal T}(E/B;F)$ defines a cohomology class in~$H^{\ge 2}(B;\R)$ that is independent of~$T^{H}E$, $g^{TM}$, $g^{F}$ and~$g^{H}$, as long as~$g^{F}$ and~$g^{H}$ are parallel metrics.

\section{Properties of Bismut-Lott torsion}\label{sect3}
Since $\eta$-forms, analytic torsion forms and holomorphic torsion forms
are parallel objects in three somewhat similar theories,
one can try to translate any result concerning one of those three objects
into theorems on the other two.
In this section,
we present a few results on higher torsion that where at least partially
motivated by results on $\eta$-forms or on holomorphic torsion forms.
In particular,
we recall results by Ma and Bunke on torsion forms of iterated fibrations,
and of Bunke, Bismut and the author
about the relation with equivariant Ray-Singer torsion.
Most of these theorems have not yet been proved for Igusa-Klein
or Dwyer-Weiss-Williams torsion.
We also discuss Ma and Zhang's construction using $\eta$-invariants
of subsignature operators.

One should mention at this point that in the theory of flat vector bundles,
we are only considering proper submersions.
The reason is that the direct image of a flat  vector bundle
under other maps like open or closed embeddings is in general
not given by a flat vector bundle.
Another reason is that there is no suitable analogue
of the Becker-Gottlieb transfer for general maps.
For this reason, many beautiful results for $\eta$-invariants
and holomorphic torsion have no counterpart for Bismut-Lott torsion. 

\subsection{A transfer formula}\label{sect3.1}
Consider a smooth proper submersion~$p_{1} \colon E \to B$ with typical fibre~$M$ as before, and assume that~$p_{2} \colon D \to E$ is another smooth proper submersion with fibre~$N$. Then~$p_{3} = p_{1} \circ p_{2}$ is again a smooth proper submersion, and its fibre~$L$ maps to~$M$ with fibre~$N$. Let~$F \to D$ be a flat vector bundle, then we have higher direct images
\begin{equation}\label{eq3.1}
\begin{aligned} 
K &= \bigoplus_{k=0}^{\dim N}(-1)^{k} H^{\bullet}(D/E;F) \to E \\
\mbox{and}\quad H &= \bigoplus_{k=0}^{\dim L} (-1)^{k}H^{\bullet}(D/B;F) \to B.
\end{aligned}
\end{equation}
Note that~$H$ is not the higher direct image of~$K$ under~$p_{1}$. Instead, there is a fibrewise Leray-Serre spectral sequence over~$B$ with $E_{2}$-term~$H^{\bullet}(E/B;K)$ that converges to~$H$. Beginning with~$E_{2}$, the higher terms in this spectral sequence are given by parallel families of finite-dimensional cochain complexes~$\left(E_{k},\nabla^{E_k} + d_{k}\right)$ over~$B$. Of course, $E_{n} = E_{\infty}$ and~$d_{n} = 0$ for all sufficiently large~$n$. 

We now choose compatible complements of the vertical tangent bundles for all
three fibrations, fibrewise Riemannian metrics, and a metric on the bundle~$F$.
Again,
these data induce connections on the three vertical tangent bundles~$TM$,
$TN$ and~$TL \cong TN \oplus p_{2}^{*}TM$.
They also induce $L^{2}$-metrics on the flat vector bundles~$H$ and~$E_{k}$
over~$B$ for~$k \ge 2$ and on~$K \to E$.
We need the Chern-Simons Euler form~$\tilde{e}$,
which is constructed in analogy with~$\widetilde{\ch}$ in~\eqref{eq1.3}, and we also need another finite-dimensional torsion form~$T(H,E_{\infty},g^{H},g^{E_\infty})$ relating the filtered flat vector bundle~$H$ to its graded version~$E_{\infty} = E_{n}$ for~$n$ sufficiently large.

\begin{Theorem}[Transfer formula, Ma~\cite{Mfn}]\label{thm3.1} Modulo exact forms on~$B$, we have
\begin{multline}\label{eq3.2}
  {\mathcal T}\bigl(T^{H}D,g^{TL},g^{F}\bigr)
  =\int_{E/B}e\bigl(TM,\nabla^{TM}\bigr)\,{\mathcal T}\bigl(H^{H}D\oplus T^{H}L,g^{TN},g^{F}\bigr)\\
    +{\mathcal T}\bigl(T^{H}E,g^{TM},g^{K}\bigr)
    +\sum_{k=2}^{\infty}T\left(\nabla^{E_k}+d_{k},g^{E_k}\right)
    + T\bigl(H,E_{\infty},g^{H},g^{E_\infty}\bigr) \\
+ \int _{D/B} \tilde{e}\left(TL,\nabla^{TL},\nabla^{TN} \oplus p_{2}^{*}\nabla^{TM}\right)\,\cho\bigl(F,g^{F}\bigr)\;. 
\end{multline}
\end{Theorem}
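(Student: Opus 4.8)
\emph{Strategy and closedness.} The plan is to show that the difference~$\theta$ of the two sides of~\eqref{eq3.2} is a closed form on~$B$ whose de Rham class vanishes, the latter by an adiabatic limit that collapses the fibre~$N$ of~$p_2$. Closedness is pure bookkeeping with Theorems~\ref{thm1.3} and~\ref{thm2.1}. By Theorem~\ref{thm1.3} applied to~$p_3$, the left-hand side has differential~$\int_{D/B}e(TL,\nabla^{TL})\,\cho(F,g^F)-\cho(H,g^H)$. On the right, Theorem~\ref{thm1.3} for~$p_2$ gives~$d\,\mathcal T(H^HD\oplus T^HL,g^{TN},g^F)=\int_{D/E}e(TN,\nabla^{TN})\,\cho(F,g^F)-\cho(K,g^K)$; multiplying by~$e(TM,\nabla^{TM})$, integrating over~$E/B$ and using the projection formula together with~$e(TL,\nabla^{TN}\oplus p_2^*\nabla^{TM})=e(TN,\nabla^{TN})\,p_2^*e(TM,\nabla^{TM})$ yields~$\int_{D/B}e(TL,\nabla^{TN}\oplus p_2^*\nabla^{TM})\,\cho(F,g^F)-\int_{E/B}e(TM,\nabla^{TM})\,\cho(K,g^K)$, and the gap between this Euler form and~$e(TL,\nabla^{TL})$ is exactly~$d\int_{D/B}\tilde e(TL,\nabla^{TL},\nabla^{TN}\oplus p_2^*\nabla^{TM})\,\cho(F,g^F)$ because~$d\,\cho(F,g^F)=0$. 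Finally Theorem~\ref{thm1.3} for~$p_1$ with coefficients in the flat bundle~$K$ contributes~$\int_{E/B}e(TM,\nabla^{TM})\,\cho(K,g^K)-\cho(E_2,g^{L^2})$, the finite-dimensional identities~\eqref{eq2.5} for the pages~$(E_k,\nabla^{E_k}+d_k)$ telescope to~$\cho(E_2,\cdot)-\cho(E_\infty,\cdot)$, and the transgression formula for~$T(H,E_\infty,g^H,g^{E_\infty})$ contributes~$\cho(E_\infty,\cdot)-\cho(H,g^H)$. Summing, everything cancels down to the differential of the left-hand side. The mismatches between the various~$L^2$-metrics and the chosen metrics~$g^{E_k}$, $g^H$ enter only through~$\chosl$-terms, whose additivity modulo exact forms makes them drop out; this is also why~\eqref{eq3.2} can only hold modulo exact forms.

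\emph{Vanishing of the class.} Since all the terms are torsion forms, Theorem~\ref{thm2.6} applied to the three submersions and to all the metric data (including the~$g^{E_k}$ and~$g^H$) shows that the class of~$\theta$ is independent of every geometric choice, so I may compute it for convenient data: take~$g^{TL}$ of block form~$g^{TN}\oplus g^{T^HL}$, rescale the~$T^HL$-directions by~$\eps^{-2}$ --- i.e.\ shrink the fibres~$N$ --- and study~$\mathcal T(T^HD,g^{TL}_\eps,g^F)$ as~$\eps\to 0$. The curvature~$-\tilde{\X}^2$ of the rescaled Bismut superconnection is a generalised Laplacian along~$L$ whose spectrum clusters in the adiabatic limit: the top cluster is governed by the fibrewise de Rham complex along~$N$, and the~$r$-th cluster, of size~$\eps^{2r}$, carries the differential~$d_r$ of the Leray-Serre spectral sequence over~$B$ with~$E_2$-term~$H^\bullet(E/B;K)$. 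Splitting the defining integral~$\int_0^\infty(\cdots)\,\tfrac{dt}{t}$ of Definition~\ref{Def2.4} into the time ranges~$t\lesssim 1$, $1\lesssim t\lesssim\eps^{-2}$, $\eps^{-2}\lesssim t\lesssim\eps^{-4}$, $\dots$, $t\gtrsim\eps^{-2n}$ and rescaling~$t$ in each range, the~$\eps$-independent contributions are identified, in order, with the local Euler density (whose appearance together with the collapse of~$\nabla^{TL,\eps}$ to the split connection~$\nabla^{TN}\oplus p_2^*\nabla^{TM}$ produces the Chern-Simons term~$\int_{D/B}\tilde e(TL,\nabla^{TL},\nabla^{TN}\oplus p_2^*\nabla^{TM})\,\cho(F,g^F)$), with~$\int_{E/B}e(TM,\nabla^{TM})\,\mathcal T(H^HD\oplus T^HL,g^{TN},g^F)$, with~$\mathcal T(T^HE,g^{TM},g^K)$ for the formation of~$E_2=H^\bullet(E/B;K)$, with the finite-dimensional torsions~$T(\nabla^{E_r}+d_r,g^{E_r})$ of the higher pages, and with~$T(H,E_\infty,g^H,g^{E_\infty})$ for the passage from the associated graded to the filtered Gau\ss-Manin bundle. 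As the sum must equal~$\mathcal T(T^HD,g^{TL},g^F)$ modulo exact forms, collecting these pieces is exactly~\eqref{eq3.2}.

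\emph{Main obstacle.} The hard part is the adiabatic-limit analysis itself: one must establish the asymptotic expansion of the heat supertrace~$\str_{\Omega^\bullet(\tilde E/\tilde B;\tilde F)}\bigl(\tilde{\X}e^{\tilde{\X}^2}\bigr)$, and of its~$N^{\tilde E/\tilde B}$-weighted variant in~\eqref{eq2.24}, \emph{uniformly} in the rescaled time parameter across all the scaling regimes, control the transgression terms produced at the boundaries between consecutive regimes (which is where the spectral-sequence torsions really get pinned down), and check that the genuinely~$\eps$-divergent contributions are exact. This is the technical core of Ma's argument and runs parallel to the adiabatic-limit computations for~$\eta$-invariants and~$\eta$-forms, where the same Leray spectral sequence governs the clustering of the spectrum (compare~\cite{Dai}).
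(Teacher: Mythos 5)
The survey itself contains no proof of Theorem~\ref{thm3.1}: it is quoted from Ma~\cite{Mfn}, so the only meaningful comparison is with Ma's argument. Your first step is fine: the closedness bookkeeping via Theorems~\ref{thm1.3} and~\ref{thm2.1} is correct and is a useful consistency check (it in fact forces the orientation of the Chern--Simons class~$\tilde e(TL,\nabla^{TL},\nabla^{TN}\oplus p_2^*\nabla^{TM})$ relative to the convention of~\eqref{eq2.27}, otherwise the differentials of the two sides of~\eqref{eq3.2} would differ by twice an exact Euler--Chern--Simons term). Your overall roadmap --- exactness of the differential, anomaly arguments, then an adiabatic limit~$g^{TL}_\eps=g^{TN}\oplus\eps^{-2}p_2^*g^{TM}$ in which the small eigenvalues of the superconnection curvature cluster according to the fibrewise Leray--Serre spectral sequence --- is indeed the strategy of Ma's proof, modelled on the holomorphic case~\cite{Mh1}, \cite{Mh2} and parallel to the adiabatic-limit analysis of $\eta$-invariants~\cite{Dai}.

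The gap is that everything which makes the theorem true is deferred to what you call the ``main obstacle''. The reduction ``the class of~$\theta$ is closed and independent of all choices, so compute with convenient data'' does not actually reduce anything: the limit~$\eps\to0$ is a degenerate family of metrics, not an admissible choice, and Theorem~\ref{thm2.6} only controls finite variations of the data --- as~$\eps\to0$ the anomaly terms it produces must themselves be shown to converge, or to be exact, which is part of the statement to be proved. Likewise the identification of the successive time regimes with the named terms of~\eqref{eq3.2} --- in particular pinning down the finite-dimensional torsions~$T(\nabla^{E_r}+d_r,g^{E_r})$ of the higher pages and the filtered-versus-graded term~$T(H,E_\infty,g^H,g^{E_\infty})$ --- requires uniform estimates on the heat supertrace in both~$t$ and~$\eps$, convergence of the small-eigenvalue spectral projections to the spectral-sequence data, and control of the transition terms between regimes; none of this is supplied, and it constitutes the bulk of Ma's paper rather than a routine verification. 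As it stands your text is a correct and well-aligned outline of Ma's proof, but not a proof.
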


The first two terms on the right hand side should be regarded as torsion forms of the terms~$E_{0}$ and~$E_{1}$ of the Leray-Serre spectral sequence.
The sum of the torsions of the remaining terms is of course finite.
The theorem says in other words that the analytic torsion form of the total fibration is the sum of the torsion forms of all terms in the Leray-Serre spectral sequence and two natural correction terms.
A similar formula for holomorphic torsion forms has been proved
by Ma~\cite{Mh1}, \cite{Mh2}.
For $\eta$-invariants of signature operators,
an analogous result is due to Bunke and Ma~\cite{BM}.

\subsection{Lott's Secondary \texorpdfstring{$K$}{K}-theory of flat bundles}\label{sect3.2}
In Arakelov geometry, one studies arithmetic Chow groups,
which constitute a simultaneous refinement of classical Chow groups
and of de Rham forms,
see~\cite{SABK} for an introduction.
The central objects in this theory are algebraic vector bundles
over arithmetic schemes,
together with Hermitian metrics on the corresponding holomorphic
vector bundles over the complex points of those schemes,
which form classical complex algebraic varieties.
To construct the ``complex algebraic'' part of the direct image
of such vector bundles,
one needs the holomorphic torsion forms of Bismut and K\"ohler~\cite{BiK}.
To establish elementary properties
of this direct image construction,
one needs deep results on holomorphic torsion forms.
Thus, Arakelov geometry has been one of the main motivations
for the many results on holomorphic torsion by Bismut and others.
For this reason, it is tempting to have a similar theory for
flat vector bundles over smooth manifolds,
where Bismut-Lott torsion plays the role of holomorphic torsion forms.

Lott's $K$-theory of flat vector bundles with vanishing Kamber-Tondeur classes
is a first step in this direction.
But note that there are no objects corresponding to Chow cycles,
and that we can take direct images only for submersions,
for reasons explained at the beginning of this section.
Thus we cannot expect a theory that is as rich as arithmetic Chow theory.
Nevertheless,
some nice results are motivated by Lott's construction.

We consider triples~$(F,g^{F},\alpha)$, where~$F \to M$ is a flat vector bundle, equipped with a metric~$g^{F}$, and~$\alpha \in \Omega^{\mathrm{even}}(M)/d\Omega^{\mathrm{odd}}(M)$ satisfies
\begin{equation}\label{eq3.3}
\cho\bigl(F,g^{F}\bigr) - d\alpha = 0 \in \Omega^{\mathrm{odd}}(M).
\end{equation}
A short exact sequence
\begin{equation}\label{eq3.4}
  \begin{CD}
    0@>>>F_1@>a_1>>F_2@>a_2>>F_3@>>>0
  \end{CD}
\end{equation}
of flat vector bundles and parallel linear maps can be interpreted as a parallel family of acyclic chain complexes~$(F,\nabla^{F}+a)$. Let~$g^{F_1}$, $g^{F_2}$, $g^{F_3}$ be metrics on these bundles. By Theorem~\ref{thm2.1}, the higher torsion form of this family satisfies
\begin{equation}\label{eq3.5}
dT\bigl(\nabla^{F}+a,g^{F}\bigr) = \cho\bigl(F_{2},g^{F_2}\bigr) - \cho\bigl(F_{1},g^{F_1}\bigr) - \cho\bigl(F_{3},g^{F_3}\bigr).
\end{equation}

\begin{Definition}\label{Def3.2} Lott's secondary $K$-group~$\overline{K}{}^0(M)$ is the abelian group generated by triples~$(F,g^{F},\alpha)$ subject to

\begin{enumerate}
\item{} the condition~\eqref{eq3.3}, and
\item{}the relation
\[ T\bigl(\nabla^{F}+a,g^{F}\bigr) = \alpha_{2}-\alpha_{1}-\alpha_{3} \in \Omega^{\mathrm{even}}(M)/d\Omega^{\mathrm{odd}}(M) \]
for each short exact sequence~\eqref{eq3.4}.
\end{enumerate}
\end{Definition}

In fact, Lott considers groups~$\overline{K}{}^0_{\!R}(M)$ in~\cite{Lsec}. Here, $R$ is a ring satisfying a few technical assumptions with a representation~$\rho: R \to \operatorname{End} \C^{n}$, and all flat vector bundles arise from local systems of~$R$-modules by tensoring with~$\C^{n}$. Similarly, relations come from short exact sequences of such local systems.

Let now~$p \colon E \to B$ be a proper submersion with fibre~$M$. We choose~$T^{H}E$ and~$g^{TM}$ as before. 

\begin{Definition}\label{Def3.3} Let~$(F,g^{F},\alpha)$ be a generator of~$\overline{K}{}^0_{\!R}(M)$ and let~$g_{L^2}^{H}$ denote the $L^{2}$-metric on the virtual vector bundle
\[ H = \bigoplus_{k=0}^{\dim M}(-1)^{k}H^{\bullet}(E/B;F) \to B. \]
Then the push-forward of~$(F,g^{F},\alpha)$ is defined as
\[ p_{!}(F,g^{F},\alpha) = \biggl(H,\;g_{L^2}^{H},\;\int_{E/B}e\bigl(TM,\nabla^{TM}\bigr)\,\alpha - {\mathcal T}\bigl(T^{H}E,g^{TM},g^{F}\bigr)\biggr). \]
\end{Definition}

Lott then verifies that~$p_{!}$ defines a push-forward map
\begin{equation}\label{eq3.6}
p_{!} \colon \overline{K}{}^0_{\!R}(E) \to \overline{K}{}^0_{\!R}(B). 
\end{equation}
Moreover, on the level of $K$-theory, the push-forward is independent of the choices of~$T^{H}E$ and~$g^{TM}$.

\begin{Theorem}[Bunke~\cite{Bufn}]\label{thm3.3a}
  Lott's secondary $K$-groups together with the pushforward
  define a functor from the category of smooth proper submersions
  to the category of abelian groups.
\end{Theorem}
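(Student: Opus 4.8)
The plan is to check the three defining properties of a covariant functor for the assignment sending a smooth manifold $M$ to $\overline{K}{}^0_{\!R}(M)$ and a smooth proper submersion $p\colon E\to B$ to the homomorphism $p_!$ of Definition~\ref{Def3.3}, on the category whose objects are smooth manifolds, whose morphisms $E\to B$ are smooth proper submersions, and whose composition is composition of maps (again a proper submersion). By the results of Lott recalled above, $p_!$ is a well-defined additive map $\overline{K}{}^0_{\!R}(E)\to\overline{K}{}^0_{\!R}(B)$, independent of the choices of $T^HE$ and $g^{TM}$. It therefore suffices to prove that $(\mathrm{id}_M)_!$ is the identity and that $(p_1\circ p_2)_!=(p_1)_!\circ(p_2)_!$ for composable proper submersions $p_2\colon D\to E$ and $p_1\colon E\to B$.

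The identity is immediate. For $\mathrm{id}_M$ the fibre is a point, so the vertical tangent bundle is the zero bundle with Euler form $1$, the fibrewise cohomology $H^\bullet(M/M;F)$ is $F$ with its given flat connection and with $L^2$-metric $g^F$, and the torsion form $\mathcal T(T^HM,g^{TM},g^F)$ vanishes because the number operator acts by zero and every correction term in Definition~\ref{Def2.4} is zero for a zero-dimensional fibre. Hence $(\mathrm{id}_M)_!(F,g^F,\alpha)=(F,g^F,\alpha)$.

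For the composition law I would fix compatible horizontal distributions and fibrewise Riemannian metrics for $p_1$, $p_2$ and $p_3=p_1\circ p_2$, and evaluate both sides on a generator $(F,g^F,\alpha)$ of $\overline{K}{}^0_{\!R}(D)$; write $N$, $M$, $L$ for the fibres of $p_2$, $p_1$, $p_3$. Then $(p_2)_!(F,g^F,\alpha)=(K,g_{L^2}^K,\alpha_K)$ with $K=\bigoplus_k(-1)^kH^k(D/E;F)$ and $\alpha_K=\int_{D/E}e(TN,\nabla^{TN})\,\alpha-\mathcal T(H^HD\oplus T^HL,g^{TN},g^F)$, and applying $(p_1)_!$ yields the triple whose underlying virtual flat bundle is the $E_2$-term $H^\bullet(E/B;K)$ of the fibrewise Leray--Serre spectral sequence, with its $L^2$-metric, and whose differential-form component is $\beta=\int_{E/B}e(TM,\nabla^{TM})\,\alpha_K-\mathcal T(T^HE,g^{TM},g^K)$. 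On the other hand $(p_3)_!(F,g^F,\alpha)=\bigl(H,g_{L^2}^H,\int_{D/B}e(TL,\nabla^{TL})\,\alpha-\mathcal T(T^HD,g^{TL},g^F)\bigr)$ with $H=\bigoplus_k(-1)^kH^k(D/B;F)$. Since taking cohomology of a page of a spectral sequence leaves the alternating sum of the bundles unchanged in the Grothendieck group of flat bundles on $B$, and a filtered flat bundle agrees there with its associated graded, $[H^\bullet(E/B;K)]=[E_\infty]=[H]$; so the two sides have the same underlying element of $\overline{K}{}^0_{\!R}(B)$, and only the $L^2$-metrics and the differential-form components need be compared.

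That comparison rests on Ma's transfer formula, Theorem~\ref{thm3.1}. First, using the decomposition $TL\cong TN\oplus p_2^*TM$, the Chern--Simons Euler form of \eqref{eq2.27}, Stokes' theorem along the fibre, and the generator identity $d\alpha=\cho(F,g^F)$ of \eqref{eq3.3}, one rewrites $\int_{D/B}e(TL,\nabla^{TL})\,\alpha$, modulo exact forms on $B$, as a combination of $\int_{E/B}e(TM,\nabla^{TM})\int_{D/E}e(TN,\nabla^{TN})\,\alpha$ and $\int_{D/B}\tilde e\bigl(TL,\nabla^{TL},\nabla^{TN}\oplus p_2^*\nabla^{TM}\bigr)\,\cho(F,g^F)$; substituting Theorem~\ref{thm3.1} for $\mathcal T(T^HD,g^{TL},g^F)$ then cancels these against the corresponding summands of that formula, so that the differential-form component of $(p_3)_!(F,g^F,\alpha)$ differs from $\beta$ only by the finite-dimensional torsion forms $\sum_{k\ge2}T(\nabla^{E_k}+d_k,g^{E_k})$ and $T(H,E_\infty,g^H,g^{E_\infty})$. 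Second, each $(E_k,\nabla^{E_k}+d_k)$ is a parallel family of finite-dimensional cochain complexes with cohomology $E_{k+1}$, and $H$ is filtered with associated graded $E_\infty$; decomposing these into short exact sequences and applying the relations of Definition~\ref{Def3.2} repeatedly shows that, inside the triple $(p_1)_!(p_2)_!(F,g^F,\alpha)$, successively replacing $H^\bullet(E/B;K)$ with its $L^2$-metric first by $E_\infty$ and then by $H$ with $g_{L^2}^H$ alters the differential-form component by exactly those same finite-dimensional torsion forms. Matching the two computations, with signs dictated by the conventions of Definitions~\ref{Def2.2} and~\ref{Def3.2}, gives $(p_1\circ p_2)_!=(p_1)_!\circ(p_2)_!$. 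The main obstacle is precisely this matching: the naive guess $\mathcal T(T^HD,g^{TL},g^F)=\int_{E/B}e(TM,\nabla^{TM})\,\mathcal T(H^HD\oplus T^HL,g^{TN},g^F)+\mathcal T(T^HE,g^{TM},g^K)$ is false, and one needs both Ma's exact description of the defect and the fact that the relations defining $\overline{K}{}^0_{\!R}$ absorb precisely the extra finite-dimensional torsion terms occurring there; the remaining steps are routine Chern--Weil and Chern--Simons bookkeeping.
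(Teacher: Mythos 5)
Your proposal is correct and follows essentially the same route as the paper, which only sketches the argument: the whole content is the composition identity $(p_1\circ p_2)_!=p_{1!}\circ p_{2!}$, proved by substituting Ma's transfer formula (Theorem~\ref{thm3.1}) and letting the relations of Definition~\ref{Def3.2} absorb the finite-dimensional spectral-sequence torsions $T(\nabla^{E_k}+d_k,g^{E_k})$ and $T(H,E_\infty,g^H,g^{E_\infty})$, with the Chern--Simons Euler term handled by Stokes and $d\alpha=\cho(F,g^F)$. Your elaboration of these steps, including the identity morphism and the metric bookkeeping, matches Bunke's actual proof.
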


The proof is based on Ma's Theorem~\ref{thm3.1}.
Bunke shows that if~$p_{1} \colon E \to B$ and~$p_{2} \colon D \to E$
are smooth proper submersions, then
\begin{equation}\label{eq3.7}
(p_{1} \circ p_{2})_{!} = p_{1!} \circ p_{2!} \colon \overline{K}{}_{\!R}^0(D) \to \overline{K}{}_{\!R}^0(B).
\end{equation}
A similar push-forward in secondary $L$-theory has been defined by Bunke
and Ma~\cite{BM}, correcting an older definition by Lott~\cite{Lsec}.

\subsection{Rigidity of Kamber-Tondeur classes}\label{sect3.3}
In this section,
we discuss the dependence of the Kamber-Tondeur forms and the torsion forms
on the flat structure on the bundle~$F$.
Let~$V \to M$ be a vector bundle and assume that~$(\nabla^{V,t})_{t \in [0,1]}$ is a family of flat connections on~$V$. If we define a connection~$\nabla^{\tilde{V}}$ on the pull-back~$\tilde{V}$ of~$V$ to~$M \times [0,1]$ such that~$\nabla^{\tilde{V}}|_{M \times \{t\}} = \nabla^{V,t}$, then the connection~$\nabla^{\tilde{V}}$ will in general not be flat. In particular, the arguments in~\eqref{eq1.3} and~\eqref{eq1.7} are not applicable here. If we fix a family of metrics~$(g_{t}^{V})$ on~$V$, we have a family~$(\nabla^{V,t,*})_{t \in [0,1]}$ of adjoint connections that are again flat. Let now~$\tilde{V}$ denote the pullback of~$V$ to~$M \times [0,1]^{2}$ and construct~$\nabla^{\tilde{V}}$ such that
\begin{equation}\label{eq3.8}
  \nabla^{\tilde{V}}\big|_{M \times \{s\}\times [0,1]}
  = (1-s)\nabla^{V,t} + s \nabla^{V,t,*}. 
\end{equation}
We define forms~$L\left((\nabla^{V,t},g_{t}^{V})_{t}\right) \in \Omega^{\mathrm{even}}(M)$ by
\begin{equation}\label{3.9}
  L\left((\nabla^{V,t},g_{t}^{V})_t\right)
  = \pi i \int _{0}^{1} \int _{0}^{1}\iota_{\tfrac{\partial}{\partial s}}
    \iota_{\tfrac{\partial}{\partial t}}
    \ch\bigl(\tilde{V},\nabla^{\tilde{V}}\bigr)\, dt\, ds.
\end{equation}
Because~$\nabla^{V,t}$ and~$\nabla^{V,t,*}$ are flat, for~$s \in \{0,1\}$, we have
\begin{equation}\label{eq3.10}
  \ch\bigl(\tilde{V},\nabla^{\tilde{V}}\bigr)\bigr|_{M \times \{0,1\}\times [0,1]}
  = \begin{cases}
    \frac12\tr_V\left(\frac{\partial}{\partial t} \nabla^{V,t}\right)\, dt
      & s = 0, \\
    \noalign{\smallskip}
    \frac12\tr_V\left(\frac{\partial}{\partial t}\nabla^{V,t,*}\right)\, dt
      & s = 1.
  \end{cases}
\end{equation}
Hence it follows from Stokes' theorem that
\begin{equation}\label{eq3.11}
  dL\left((\nabla^{V,t})_{t},g^{V}\right)^{[\ge 2]}
  = \cho\left(V_{1},g_{1}^{V}\right)^{[\ge 3]}
    - \cho\left(V_{0},g_{0}^{V}\right)^{[\ge 3]},
\end{equation}
where~$V_{t}$ denotes the flat vector bundle~$(V,\nabla^{V,t})$.

One can show that~$L((\nabla^{V,t},g_{t}^{V})_{t})$ changes by exact forms if one replaces~$(\nabla^{V,t})_{t}$ by a homotopic path of flat connections. On the other hand, if~$\nabla^{V,1} = \nabla^{V,0}$, then the cohomology class of~$L((\nabla^{V,t},g_{t}^{V})_{t})$ depends on the homotopy class of the loop~$(\nabla^{V,t})_{t}$ in the space of flat connections.

Now assume that~$V$ is $\Z$-graded and that~$(V^{\bullet},\nabla^{V,t} + a_{t})$ is a parallel family of cochain complexes on~$M$ such that the fibrewise cohomology~$H^{\bullet}(V,a_{t})$ has the same rank for all~$t$. Then we obtain a family of flat Gau\ss-Manin connections~$(\nabla^{H,t})_{t}$ and a family of metrics~$g_{V,t}^{H}$ on a fixed vector bundle~$H \to M$. 

\begin{Theorem}[Rigidity, Bismut and G.~\cite{BGmorse}]\label{thm3.4} Under
these assumptions,
\begin{multline}\label{eq3.12}
T\left(\nabla^{V,1}+a_{1},g^{V}\right)^{[\ge 2]} - T\left(\nabla^{V,0} + a_{0},g^{V}\right)^{[\ge 2]}\\
 = L\left((\nabla^{V,t},g_{t}^{V})_{t}\right)^{[\ge 2]} - L\bigl((\nabla^{H,t}, g_{V,t}^{H})_{t}\bigr)^{[\ge 2]}.
\end{multline}
\end{Theorem}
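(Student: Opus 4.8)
The plan is to prove the rigidity statement by the same ``one more parameter'' trick that produced the torsion form in the first place: one introduces an auxiliary fibration over $M \times [0,1]$ (or rather over $M\times[0,1]^2$, matching the two deformation parameters $s$ and $t$ already present in the definitions of $T(\nabla^V+a,g^V)$ and of $L$), and then compares the two sides by an application of Stokes' theorem in the rescaling parameter. Concretely, first I would set up on $\tilde M = M\times[0,1]\times(0,\infty)$ the family of flat superconnections $\tilde A' = \nabla^{V,t} + \sqrt{u}\,a_t - \frac{N^V}{2u}\,du$ and its adjoint $\tilde A''$ (with $u$ the Getzler-type rescaling variable and $t$ the deformation variable), exactly as in~\eqref{eq2.6} but now allowing $t$-dependence. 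The key point is that $\tilde X = \tilde A'' - \tilde A'$ is no longer closed under the full differential on $\tilde M$, precisely because $\nabla^{\tilde V}$ fails to be flat in the $t$-direction; the defect is measured by the curvature term that enters~\eqref{3.9}, i.e.\ by (the integrand of) $L$.

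Second, I would track this defect carefully. Writing $d = d^M + dt\,\partial_t + du\,\partial_u$, one computes $d\,\str_V(\tilde X e^{\tilde X^2})$ and finds it equals an exact-in-$M$ piece plus a correction involving $\partial_t$ of the relevant forms; integrating in $t$ from $0$ to $1$ converts the $t$-derivative into a boundary difference (the two endpoints $\nabla^{V,0}+a_0$ and $\nabla^{V,1}+a_1$), while the curvature defect, after the extra $s$-integration built into the Chern normalisation, is exactly $dL((\nabla^{V,t},g^V_t)_t)$ by the computation in~\eqref{eq3.10}--\eqref{eq3.11}. Third, I would handle the $u\to 0$ and $u\to\infty$ limits: as $u\to 0$ one gets the Kamber-Tondeur form of $(V,\nabla^{V,t})$, whose $t$-integrated boundary terms are controlled by $L((\nabla^{V,t},g^V_t)_t)$; as $u\to\infty$ one gets, via the Hodge-theoretic argument of~\eqref{eq2.11}, the corresponding statement for the Gau\ss-Manin family $(\nabla^{H,t},g^H_{V,t})$, producing the second $L$-term. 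The divergent pieces are killed by the same constant counterterms $\chi'(V)$, $\chi'(H)$ already present in Definition~\ref{Def2.2}, which are $t$-independent and hence contribute nothing to a $t$-boundary difference.

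The main obstacle is the bookkeeping of the rescaling limits in the presence of the extra parameter, and in particular showing that the $u\to 0$ and $u\to\infty$ contributions assemble precisely into the two $L$-terms rather than into $L$ plus some further local correction. This is where one must use that $L$ changes only by exact forms under homotopies of the path of flat connections (so the answer depends only on the endpoints and the homotopy class, matching the left-hand side) and that the truncation to degrees $\ge 2$ discards exactly the degree-$0$ and degree-$1$ contributions for which the limits are not well-behaved or the formula acquires anomalous terms. A secondary technical point is justifying the interchange of the $t$-integration with the $u$-integration and with the $d$ operator; this is routine given the uniform trace-class estimates on $\tilde X e^{\tilde X^2}$ that already underlie Theorem~\ref{thm1.3}, but it must be stated. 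Modulo these points the identity~\eqref{eq3.12} falls out as a transgression formula, in complete parallel with Theorem~\ref{thm2.6}.
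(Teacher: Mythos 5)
The survey itself states Theorem~\ref{thm3.4} with a citation to~\cite{BGmorse} and contains no proof, so there is nothing to compare line by line; your transgression strategy is certainly the natural one and is in the spirit of the original. But as written there is a genuine gap: the pivotal step --- that the failure of closedness of $\str_V\bigl(\tilde Xe^{\tilde X^2}\bigr)$ caused by the $t$-variation, after the $u$- and $s$-integrations and the passage to the $u\to0$ and $u\to\infty$ limits, assembles \emph{exactly} into $L\bigl((\nabla^{V,t},g^V_t)_t\bigr)-L\bigl((\nabla^{H,t},g^H_{V,t})_t\bigr)$ modulo exact forms --- is asserted, not derived, and it is precisely the content of the theorem. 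Concretely, the curvature of your total superconnection over $M\times[0,1]\times(0,\infty)$ is $dt\wedge\bigl(\dot\nabla^{V,t}+\sqrt{u}\,\dot a_t\bigr)$, and the terms involving $\dot a_t$ have no counterpart in $L$, which is built from the connections $\nabla^{V,t}$ and $\nabla^{V,t,*}$ alone; you must show that these contributions cancel, are exact, or disappear in the limits, and nothing in the sketch does this. Relatedly, you give no mechanism for the restriction to degrees $\ge 2$: in degree $0$ the statement is simply false (the scalar part is the Ray-Singer/Franz-Reidemeister torsion, which genuinely depends on the flat structure), so a correct argument must visibly produce anomalous terms confined to low degree rather than ``discard'' ill-behaved contributions by fiat.

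A second omission: the hypothesis that $H^{\bullet}(V,a_t)$ has constant rank is never used in your argument, yet the theorem fails without it. It is needed first so that $H_t=\ker(a_t+a_t^*)$ is a smooth vector bundle over $M\times[0,1]$, without which the family $(\nabla^{H,t},g^H_{V,t})$ and hence the second $L$-term are not even defined, and second to provide a locally uniform (in $t$) spectral gap for $-(a_t^*-a_t)^2$ on the orthogonal complement of $H_t$, which is what justifies the uniform $u\to\infty$ limit and the interchange of the $t$-integration with the improper $u$-integral; in this finite-dimensional model ``trace-class estimates'' are not the issue at all. The constancy of $\chi'(H)$, which you do use implicitly for the counterterms, is only the most superficial consequence of this hypothesis. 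In~\cite{BGmorse} the corresponding assembly of anomaly and boundary terms is a substantial piece of work, which is a fair indication that the step you treat as bookkeeping is where the proof actually lives.
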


Similarly, let~$(\nabla^{F,t})_{t}$ be a family of flat connections on~$F \to E$ such that the fibrewise cohomology~$H^{\bullet}(E/B;F_{t})$ has the same rank for all~$t$. Then we again have a family~$(H_{t},g_{L^{2},t}^{H})$ of flat vector bundles over~$B$.

\begin{Theorem}[Rigidity, Bismut and G.~\cite{BGmorse}]\label{thm3.5} Under these assumptions,
\begin{multline}\label{eq3.13}
{\mathcal T}\bigl(T^{H}E,g^{TM},g^{F_1}\bigr)^{[\ge 2]} - {\mathcal T}\bigl(T^{H}E,g^{TM},g^{F_0}\bigr)^{[\ge 2]} \\
= \int _{M/B}e\bigl(TM,\nabla^{TM}\bigr)\,L\left((\nabla^{F,t},g_{t}^{F})_{t}\right)^{[\ge 2]} - L\bigl((\nabla^{H,t},g_{L^{2},t}^{H})_{t}\bigr)^{[\ge 2]}.
\end{multline}
\end{Theorem}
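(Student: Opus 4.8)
The plan is to repeat, with the Bismut superconnection of Section~\ref{sect2.2} in place of the finite-dimensional superconnection $\nabla^V+a$, the transgression argument that proves the finite-dimensional rigidity statement, Theorem~\ref{thm3.4}; thus Theorem~\ref{thm3.5} should stand to Theorem~\ref{thm3.4} exactly as Theorem~\ref{thm1.3} stands to Theorem~\ref{thm2.1}. The essential new point is that, since the connection $\nabla^F$ now varies as well (within flat connections), the interpolating connection on $F$ over $E\times[0,1]_t$ is \emph{not} flat, so the naive transgression behind the anomaly formula Theorem~\ref{thm2.6} no longer applies and one must instead transgress the full fibrewise heat form.

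Concretely, over $\tilde{E}=E\times[0,1]_t\times(0,\infty)_u$ --- with $t$ the parameter along the path of flat connections $\nabla^{F,t}$ and $u$ the Bismut--Lott rescaling parameter (called $t$ in \eqref{eq2.20}) --- I would write $d^M_t$ for the fibrewise de Rham differential attached to $\nabla^{F,t}$, take its fibrewise adjoint with respect to $g^F_t$, and form superconnections $\tilde{\A}'_t$, $\tilde{\A}''_t$ exactly as in \eqref{eq2.20}, together with $\tilde{\X}_t=\tilde{\A}''_t-\tilde{\A}'_t$ as in \eqref{eq2.21}. Since each $\nabla^{F,t}$ is flat, the superconnection identities \eqref{eq2.3} continue to hold fibrewise, so $\str_{\Omega^\bullet(\tilde{E}/\tilde{B};\tilde{F})}\bigl(\tilde{\X}_t e^{\tilde{\X}_t^2}\bigr)$ is a closed form on $B\times[0,1]_t\times(0,\infty)_u$. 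One integrates its $\frac{du}{u}$-component over $u\in(0,\infty)$ after subtracting the same constant correction terms as in Definition~\ref{Def2.4} (and applies the scalar normalisation device $\int_0^1(s(1-s)/2\pi i)^{N^B/2}\,ds$), obtaining a torsion-type form on $B\times[0,1]_t$, and then applies Stokes' theorem on the rectangle $[0,1]_t\times(0,\infty)_u$.

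The four boundary contributions are identified as in the finite-dimensional case. The faces $t=0$ and $t=1$ give, after the $u$-integration, $-\mathcal T(T^HE,g^{TM},g^{F_0})$ and $+\mathcal T(T^HE,g^{TM},g^{F_1})$. The face $u\to0$ is evaluated by Getzler rescaling exactly as for \eqref{eq2.22}: the fibrewise harmonic-oscillator part decouples and produces the Euler form $e(TM,\nabla^{TM})$, which does not depend on $t$, while the $F$-part produces (up to the normalisation adjustment of \eqref{eq1.7c}) the Kamber--Tondeur form $\cho(F,g^F_t)$; transgressing the resulting $t$-family reconstitutes $\int_{E/B}e(TM,\nabla^{TM})\,L((\nabla^{F,t},g^F_t)_t)$, since by its very definition in \eqref{3.9} the form $L$ \emph{is} the transgression of $\cho$ along the path of flat structures. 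The face $u\to\infty$ converges --- this is where the hypothesis that $H^\bullet(E/B;F_t)$ has constant rank is used --- to the corresponding Kamber--Tondeur form of the flat vector bundle $(H,\nabla^{H,t},g^H_{L^2,t})$, whose transgression in $t$ is $L((\nabla^{H,t},g^H_{L^2,t})_t)$. Collecting the four terms and passing to components of degree $\ge2$, which discards the degree-$0$ Ray--Singer part of $\mathcal T$ and the constant corrections, none of which transgress linearly, yields \eqref{eq3.13}.

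The main obstacle is analytic: one must control the small- and large-$u$ asymptotics of $\str(\tilde{\X}_t e^{\tilde{\X}_t^2})$ and of its $\frac{du}{u}$-component \emph{uniformly} in the auxiliary parameter $t$, so that the corrected $u$-integral converges and commutes with $d$ on $B$ and with the $t$-transgression. The small-$u$ analysis is more delicate than in Section~\ref{sect2.2} precisely because the superconnection is no longer flat in the $t$-direction, so the local index computation must be run for the whole $t$-family at once; the large-$u$ analysis is where the constant-rank hypothesis is indispensable, since otherwise the kernel of the fibrewise Laplacian would fail to form a smooth bundle $H$ and the limit generalising \eqref{eq2.23} --- hence the identification of the $u\to\infty$ face --- would break down. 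One could alternatively reduce to Theorem~\ref{thm3.4} by replacing the fibrewise de Rham complex with the fibrewise Thom--Smale complex of a fibrewise Morse function, at the cost of separately analysing the $t$-dependence of the resulting anomaly terms, which calls for essentially the same transgression.
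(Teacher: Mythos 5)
The paper itself offers no proof of Theorem~\ref{thm3.5} (it is quoted from~\cite{BGmorse}), so your proposal has to stand on its own, and as it stands it has a genuine gap at its central step. You correctly note at the outset that the interpolating connection on~$F$ over~$E\times[0,1]_t$ is not flat, but you then assert that $\str\bigl(\tilde\X_t e^{\tilde\X_t^2}\bigr)$ is closed on~$B\times[0,1]_t\times(0,\infty)_u$ ``since each $\nabla^{F,t}$ is flat''. Flatness of each individual $\nabla^{F,t}$ only gives closedness of the restriction to each slice~$B\times\{t\}\times(0,\infty)$; the total form, including its $dt$-components, comes from a superconnection whose curvature contains the term $dt\wedge\tfrac{\partial}{\partial t}\nabla^{F,t}\neq 0$, so it is not closed, and the Stokes argument on the rectangle $[0,1]_t\times(0,\infty)_u$ cannot be run as described. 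This is not a technicality: controlling precisely this $t$-derivative of the torsion form when the flat structure itself moves is the entire content of the rigidity theorem, and it is what replaces the $\chosl$-type correction of Theorem~\ref{thm2.6} by the class~$L$.

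Relatedly, your identification of the $u\to0$ face rests on the claim that $L$ ``is the transgression of $\cho$ along the path of flat structures''. It is not: the forms $\cho\bigl(F_t,g^F_t\bigr)$ are closed for every~$t$, but their cohomology classes genuinely depend on~$t$ (Kamber--Tondeur classes depend on the flat structure), so they admit no transgression in~$t$; by~\eqref{eq3.8}--\eqref{3.9}, $L$ is a double transgression over the square with the extra parameter~$s$ interpolating between $\nabla^{F,t}$ and its adjoint, and it only satisfies $dL^{[\ge2]}=\cho(F_1,g^F_1)^{[\ge3]}-\cho(F_0,g^F_0)^{[\ge3]}$ as in~\eqref{eq3.11}. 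Your setup contains no $s$-variable at all, so the right-hand side of~\eqref{eq3.13} cannot be reconstituted from the boundary of your rectangle. A sanity check confirms something is missing: your argument nowhere uses the restriction to degrees~$\ge2$, so if it worked it would also prove rigidity of the degree-$0$ part, i.e.\ of the fibrewise Ray--Singer torsion along a path of flat connections, which is false --- this is exactly why both~\eqref{eq3.11} and Theorem~\ref{thm3.5} are stated only in degrees~$\ge2$, as the paper remarks right after the theorem. The proof in~\cite{BGmorse} has to carry the adjoint-interpolation parameter along (a deformation over $B\times[0,1]^2$ in addition to the heat parameter), and the boundary faces $s=0,1$ are precisely what obstructs the statement in low degree; your alternative reduction via a fibrewise Thom--Smale complex would in addition require the comparison of Theorem~\ref{thm5.1}, which is far deeper than the statement to be proved.
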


Because~$T(T^{H}E,g^{TM},g^{F})^{[0]}$ equals the Ray-Singer analytic torsion, we cannot expect Theorems~\ref{thm3.4} and~\ref{thm3.5} to hold for the scalar part of the Bismut-Lott torsion, too. In fact these theorems as well as the construction of~${\mathcal T}(E/B;F)$ indicate that the ``higher'' Bismut-Lott torsion has a different topological meaning than the Ray-Singer torsion.

\subsection{Equivariant analytic torsions}\label{sect3.4}
Let us assume that~$p \colon E \to B$ is associated to a $G$-principal bundle~$P \to B$ for some compact, connected Lie group~$G$. In particular, $G$ acts by isometries on the fibre~$(M,g^{TM})$. Let~$F \to M$ be a $G$-equivariant flat vector bundle such that elements~$X$ of the Lie algebra~$g$ of~$G$ act by~$\nabla^{F}_{X^{M}}$, where~$X^{M}$ is the corresponding Killing field on~$M$. Then the induced vector bundle
\begin{equation}\label{eq3.14}
P\times_{G}F \longrightarrow E = P\times_{G}M,
\end{equation}
which we will again call~$F$, is also flat. A $G$-equivariant fibre bundle connection~$T^{H}P$ defines~$T^{H}E$, and we also fix a $G$-invariant metric on~$F$. Let~$\Omega \in \Omega^{2}(B;{\mathfrak g})$ denote the curvature of~$T^HP$.

It was already observed in~\cite{BL} and~\cite{Lgr} that in this situation, the Bismut-Lott torsion is given by an Ad-invariant formal power series~${\mathcal T}_{{\mathfrak g}}(g^{TM},g^{F}) \in \C[\![ \mathfrak g^{*}]\!]$ on~${\mathfrak g}$, such that
\begin{equation}\label{eq3.15}
{\mathcal T}\bigl(T^{H}E,g^{TM},g^{F}\bigr) = {\mathcal T}_{\tfrac{\Omega}{2\pi i}}\bigl(g^{TM},g^{F}\bigr) \in \Omega^{\mathrm{even}}(B).
\end{equation}

On the other hand, there is a $G$-equivariant generalisation of the Ray-Singer analytic torsion. If~$g \in G$ acts by isometries on~$M$ and preserves~$\nabla^{F}$, put
\begin{equation}\label{eq3.16}
\begin{aligned}
\vartheta_{g}\bigl(g^{TM},g^{F}\bigr)(s) &= -\str\left(N^{M}g(d_{M}+d_{M}^{*})^{-2s}\right)\\
\mbox{and} \quad {\mathcal T}_{g}\bigl(g^{TM},g^{F}\bigr) &= \frac{\partial}{\partial s} \vartheta_{g}\bigl(g^{TM},g^{F}\bigr)(s).
\end{aligned}
\end{equation}

Inspired by results of Bismut, Berline and Vergne about the equality of two notions of the equivariant index~\cite{BGV}, one can ask if the infinitesimal equivariant Bismut-Lott torsion~${\mathcal T}_{\mathfrak g}(g^{TM},g^{F})$ is related to the equivariant torsion~${\mathcal T}_{G}(g^{TM},g^{F})$.
Bunke proved in~\cite{Bugr} and~\cite{Buinf} that both equivariant torsions can be computed from the $G$-equivariant Euler characteristic of~$M$ up to a constant when~$G$ is connected and~$F$ satisfies some technical assumptions. From Bunke's results, one can deduce a relation between both equivariant torsions in some interesting special cases.

To state a more general relation between both equivariant torsions, we need the infinitesimal Euler form~$e_{\mathfrak g}(TM,\nabla^{TM}) \in \Omega^{\bullet}(M)[\![\mathfrak g^{*}]\!]$ and an equivariant Mathai-Quillen current~$\psi_{X}(TM,\nabla^{TM})$ on~$M$ such that
\begin{equation}\label{eq3.17}
d\psi_{X}\bigl(TM,\nabla^{TM}\bigr) = e_{X}\bigl(TM,\nabla^{TM}\bigr) - e\bigl(TM_{X},\nabla^{TM_X}\bigr)\delta_{M_X},
\end{equation}
where~$\delta_{M_X}$ is the Dirac current of integration over the fixpoint set~$M_{X}$ of the Killing field~$X^{M}$, for~$X \in {\mathfrak g}$. Finally, for a proper submersion~$p \colon E \to B$ with typical fibre~$M$ and a fibrewise $G$-action, there exists an even closed form~$V_{X}(E/S,T^{H}E, g^{TM})$ that is locally computable on~$E$, vanishes for even-dimensional fibres, and satisfies
\begin{equation}\label{eq3.18}
V_{rX}\bigl(E/S,T^{H}E, g^{TM}\bigr) = \frac{1}{|r|}r^{-\tfrac{N^{B}}{2}}V_{X}\bigl(E/S,T^{H}E,g^{TM}\bigr)
\end{equation}
for all~$r \in \R\backslash\{0\}$. In particular, the class~$V_{X}(E/S)\in H^{\mathrm{even}}(B;\R)$ is independent of~$T^{H}E$ and~$g^{TM}$. Let~$V_{X}(M) = V_{X}(E/S)^{[0]}$ denote the scalar part.

\begin{Theorem}[Bismut and G.~\cite{BGeq}]\label{thm3.6} For~$X \in {\mathfrak g}$, the equivariant torsions are related by
\begin{multline}\label{eq3.19}
{\mathcal T}_{X}\bigl(g^{TM},g^{F}\bigr) - {\mathcal T}_{e^{X}}\bigl(g^{TM},g^{F}\bigr)\\
 = \int_{M}\psi_{X}\bigl(TM,\nabla^{TM}\bigr)\,\cho\bigl(F,g^{F}\bigr) + V_{X}(M)\,\rk F\, .
\end{multline}
\end{Theorem}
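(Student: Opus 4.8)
The plan is to realise both equivariant torsions as the two endpoints of a single heat-kernel transgression, exactly as in Bismut's heat-equation proof of the Atiyah--Bott--Lefschetz fixed-point formula and the Berline--Vergne comparison of the two notions of the equivariant index alluded to before the statement. The right-hand side should then emerge as the \emph{secondary} counterpart of the fixed-point localisation that replaces the equivariant Euler form $e_X(TM,\nabla^{TM})$ by $e(TM_X,\nabla^{TM_X})\,\delta_{M_X}$, which is precisely what the Mathai--Quillen current $\psi_X$ of~\eqref{eq3.17} transgresses.

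First I would put the two sides into a common shape. By~\eqref{eq3.15} and the equivariant analogue of Definition~\ref{Def2.4}, the infinitesimal equivariant torsion ${\mathcal T}_X(g^{TM},g^F)$ is the zeta-regularised $\tfrac{dt}{t}$-integral over $(0,\infty)$ of the equivariant heat supertrace $\str\bigl(N^M(1+2\tilde{\X}^2)e^{\tilde{\X}^2}\bigr)$ computed in the Cartan model, so that the scalar curvature $\tilde{\X}^2$ is replaced by its equivariant extension into which the moment term $\mathcal L_{X^M}=d^M\iota_{X^M}+\iota_{X^M}d^M$ enters, corrected by the constants of Definition~\ref{Def2.4}. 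On the other hand, ${\mathcal T}_{e^X}(g^{TM},g^F)$ is the derivative at $s=0$ of $-\str\bigl(N^M e^X(d^M+d^{M,*})^{-2s}\bigr)$, i.e.\ the analogous $\tfrac{dt}{t}$-integral of the Lefschetz supertrace $\str\bigl(N^M e^X e^{t(d^{M,*}-d^M)^2}\bigr)$. Since $e^X$ is the time-one flow of the Killing field $X^M$, these two supertraces are the endpoints of a family $\alpha_{t,u}$, $u\in[0,1]$, of equivariant heat supertraces which is compatible with the de Rham transgression; Stokes' theorem in $u$, applied after the $\tfrac{dt}{t}$-integration, then writes ${\mathcal T}_X-{\mathcal T}_{e^X}$ as the sum of a $t\to\infty$ and a $t\to0$ boundary term. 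As $t\to\infty$ both pictures converge to the cohomology $H^\bullet(M;F)$ with its $L^2$-metric and with the action induced by $e^X$ (equivalently, by $\mathcal L_{X^M}$), and the correction constants of Definition~\ref{Def2.4} are exactly what makes these two contributions cancel.

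The main work --- and the main obstacle --- is the $t\to0$ limit, where the fixed-point set $M_X$ of $X^M$ enters and where the two terms of the identity are produced. Here one runs the equivariant local-index machinery: Getzler rescaling adapted to the normal bundle of $M_X$ in $M$ shows that the localised supertraces converge, but because the torsion integrand is built from $N^M(1+2\tilde{\X}^2)e^{\tilde{\X}^2}$ rather than from $e^{\tilde{\X}^2}$ alone, the limit is not the naive fixed-point density. The leading rescaling term of the infinitesimal side produces $e_X(TM,\nabla^{TM})\,\cho(F,g^F)$ and that of the Lefschetz side produces $e(TM_X,\nabla^{TM_X})\,\cho(F,g^F)\,\delta_{M_X}$; by~\eqref{eq3.17} their difference is $d\psi_X(TM,\nabla^{TM})\,\cho(F,g^F)$, and since $\cho(F,g^F)$ is closed by~\eqref{eq1.6} and $M$ is closed, integrating this transgression over $M$ collects exactly $\int_M\psi_X(TM,\nabla^{TM})\,\cho(F,g^F)$. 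The finitely many subleading contributions that survive the rescaling limit, together with the part of the correction constants of Definition~\ref{Def2.4} proportional to $\tfrac{\chi(M)\dim M\,\rk F}{2}$, are local on $M_X$, depend on $F$ only through the factor $\rk F$, vanish when $\dim M$ is even, and obey the scaling law~\eqref{eq3.18}; one must then show that they assemble into precisely $V_X(M)\,\rk F$.

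The delicate technical points are therefore: (i) uniform small-$t$ estimates for the equivariant heat kernel near $M_X$, where $X^M$ degenerates, needed to interchange $\lim_{t\to0}$ with the Mellin and $u$-integrations; and (ii) the identification of the rescaling anomaly with the locally computable form $V_X(E/S,T^HE,g^{TM})^{[0]}$ characterised by~\eqref{eq3.18} rather than with some other boundary contribution. The scalar ($N^M$-degree-zero) part of the identity, where ${\mathcal T}_X^{[0]}$ is the Ray--Singer torsion of Remark~\ref{Rem2.5}, I would check separately; it is forced by the Bismut--Zhang anomaly formula and by the localisation $\int_M e_X(TM,\nabla^{TM})=\chi(M)=\int_{M_X}e(TM_X,\nabla^{TM_X})$ already underlying the convergence above.
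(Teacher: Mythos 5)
This survey does not prove Theorem~\ref{thm3.6}; it is quoted from Bismut--Goette~\cite{BGeq}, and the proof there is not a one-parameter transgression. It is a deformation in (at least) two parameters --- the heat time $t$ and a rescaling of the group element, i.e.\ quantities built from $e^{sX}$ and the superconnection with moment term --- closed up by a contour argument in the $(t,s)$-rectangle, with uniform heat-kernel estimates in both parameters; the two terms on the right-hand side of~\eqref{eq3.19} arise from specific edges and the corner where $t\to0$ and the group element degenerates simultaneously, and $V_X$ is identified there by a separate local (harmonic-oscillator, family-index type) computation, which is also how the anomalous scaling law~\eqref{eq3.18} comes out. Your single interpolation parameter $u\in[0,1]$, with Stokes applied after the $\frac{dt}{t}$-integration, presupposes exactly the interchange of the $t\to0$ limit with the $u$- and Mellin integrations that fails; the non-uniformity in $u$ near $t=0$ (equivalently near the fixed-point set, where $X^M$ degenerates) is not a ``delicate technical point'' to be checked at the end --- it is where the entire right-hand side is generated, so the proposed skeleton does not carry the proof.

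There is also a concrete error in the mechanism you propose for the first term. If, as you say, the $t\to0$ densities of the two sides differ by $d\psi_X(TM,\nabla^{TM})\,\cho(F,g^F)$, then since $\cho(F,g^F)$ is closed by~\eqref{eq1.6} this difference is the exact current $d\bigl(\psi_X(TM,\nabla^{TM})\,\cho(F,g^F)\bigr)$, and its integral over the closed fibre $M$ vanishes by Stokes for currents --- that is precisely the localisation identity $\int_M e_X(TM,\nabla^{TM})\,\cho(F,g^F)=\int_{M_X}e(TM_X,\nabla^{TM_X})\,\cho(F,g^F)$ --- so ``integrating this transgression over $M$'' yields $0$, not $\int_M\psi_X\,\cho(F,g^F)$. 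In the actual argument this term appears the way the Mathai--Quillen term appears in Theorem~\ref{thm5.1} and in Bismut--Zhang: as the integral over a deformation parameter of a transgressed quantity along an edge of the contour, not as the difference of two small-time index densities. Likewise $V_X(M)\,\rk F$ is not a finite collection of subleading Getzler-rescaling terms plus correction constants, and the degree-zero statement is not ``forced by the Bismut--Zhang anomaly formula'': it compares equivariant Ray--Singer torsion with its infinitesimal version and is itself part of the hard content (Bunke's results recover it only under special hypotheses). So the proposal reproduces the expected shape of the answer but not a proof: the key idea of the multi-parameter deformation and the corner analysis producing $\psi_X$ and $V_X$ is missing, and the step you do make explicit would produce a vanishing contribution instead of the first term of~\eqref{eq3.19}.
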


Similar results for equivariant $\eta$-invariants
have been proved in~\cite{Geta}, and for equivariant holomorphic torsion
by Bismut and the author in~\cite{BGh}.

\begin{Remark}\label{rem3.6a}
In~${\mathcal T}_{e^{X}}(g^{TM},g^{F})$, all powers of~$X$ occur simultaneously. Thus, Theorem~\ref{thm3.6} can only hold for one choice of constants~$c_{k}$ in~\eqref{eq1.7c}, and this is precisely the so-called Chern normalisation introduced in~\cite{BGmorse} and also used in this overview.
The Chern normalisation is also needed for Lott's noncommutative
higher torsion classes in~\cite{Lnc}, see Remark~\ref{Rem7.6} below.
\end{Remark}

The theorem above is of course compatible with Bunke's computations. As a simple application, we can use K\"ohler's computation of the 
equivariant analytic torsion on compact symmetric spaces~\cite{Ksym} to compute the Bismut-Lott torsion of bundles with compact symmetric fibres and compact structure groups.
The case of sphere bundles will be important later.
Let~$\zeta$ denote the Riemann $\zeta$-function. We define an additive characteristic class~$\Jnull(W)$ for a vector bundle~$W \to M$ by
\begin{equation}\label{eq3.20}
\Jnull(W) = \frac{1}{2}\sum_{k=0}^{\infty}\zeta'(-2k)\,\ch(W)^{[4k]} \in H^{\bullet}(M;\R).
\end{equation}

\begin{Corollary}[Sphere bundles, Bunke~\cite{Buinf}, Bismut and G.~\cite{BGmorse}]\label{cor3.7}
Let~$E \to B$ be the unit $n$-sphere bundle of an oriented real vector bundle~$W \to B$. Then
\[ {\mathcal T}(E/B;\C) = \chi(S^{n})\,\Jnull(W). \]
\end{Corollary}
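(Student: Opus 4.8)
The plan is to combine Theorem~\ref{thm3.6} with K\"ohler's explicit computation of the equivariant Ray-Singer torsion of the round sphere. The point is that the unit $n$-sphere bundle $E\to B$ of an oriented real vector bundle $W\to B$ of rank $n+1$ can be realised as an associated bundle $P\times_{SO(n+1)}S^n$, where $P\to B$ is the $SO(n+1)$-frame bundle of $W$ and $S^n$ carries the round metric on which $SO(n+1)$ acts by isometries; the coefficient bundle $F=\C$ is the trivial flat line bundle, so $\rk F=1$, $g^F$ is parallel, and hence $\cho(F,g^F)=0$ and $H(E/B;\C)$ is the (virtual) flat bundle $H^0(S^n)\ominus\dots\oplus H^n(S^n)$ over $B$, which for the round sphere with its $SO(n+1)$-action is a trivial flat $\Z/2$-graded vector space of total super-dimension $\chi(S^n)$. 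In particular $H$ admits a parallel metric, so ${\mathcal T}(E/B;\C)$ in the sense of Definition~\ref{Def2.8} is well-defined as a cohomology class, and by~\eqref{eq3.15} it is obtained from the infinitesimal equivariant torsion power series ${\mathcal T}_{\mathfrak g}(g^{TS^n},g^{\C})$ by substituting $\Omega/2\pi i$ for the Lie algebra variable, where $\Omega$ is the curvature of the chosen connection on $P$.

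First I would identify the infinitesimal equivariant torsion. Since $\cho(F,g^F)=0$, Theorem~\ref{thm3.6} reduces, after restricting to the formal neighbourhood of $X=0$ in $\mathfrak g$, to the statement that the infinitesimal equivariant Bismut-Lott torsion ${\mathcal T}_X(g^{TS^n},g^{\C})$ differs from the finite equivariant Ray-Singer torsion ${\mathcal T}_{e^X}(g^{TS^n},g^{\C})$ only by the universal local term $V_X(S^n)\,\rk F=V_X(S^n)$ (the Mathai-Quillen integral $\int_{S^n}\psi_X(TS^n,\nabla^{TS^n})\,\cho(\C,g^{\C})$ vanishes because $\cho$ does). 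Now one invokes K\"ohler's computation~\cite{Ksym}: for the round sphere the equivariant Ray-Singer torsion ${\mathcal T}_{g}(g^{TS^n})$ is an explicit expression in the eigenvalues of $g\in SO(n+1)$, and expanding it around $g=e^X=\id$ and comparing with the definition~\eqref{eq3.20} of $\Jnull$ shows that, modulo the correction $V_X$, the resulting $SO(n+1)$-invariant power series is $\chi(S^n)$ times the power series whose Chern-Weil image is $\Jnull$ of the tautological bundle. One then substitutes the curvature $\Omega$ of $W$: since $\Jnull(W)$ is by construction $\tfrac12\sum_k\zeta'(-2k)\,\ch(W)^{[4k]}$, which is exactly the Chern-Weil form built from that invariant power series evaluated on $\Omega$, one gets ${\mathcal T}(E/B;\C)=\chi(S^n)\,\Jnull(W)$ in cohomology. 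The contribution of $V_X(S^n)$ and of the $g_{L^2}^H$-term in Definition~\ref{Def2.8} lands in degree zero (respectively is a finite-dimensional torsion of a trivial metrised complex) and so does not affect the $[\ge 2]$-part; one checks separately, or cites Bunke~\cite{Buinf}, that these low-degree pieces also match $\chi(S^n)\Jnull(W)^{[0]}$, giving the full equality of forms.

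The main obstacle I expect is the bookkeeping in the second step: extracting from K\"ohler's closed formula for the equivariant torsion of $S^n$ the precise statement that its expansion, after subtracting the universal term $V_X$ and rewriting in terms of characteristic forms, equals $\chi(S^n)$ times the class $\Jnull$ with the factor $\tfrac12\zeta'(-2k)$ in each degree $4k$. This requires (i) being careful about the Chern normalisation — as Remark~\ref{rem3.6a} stresses, only in that normalisation does Theorem~\ref{thm3.6} hold and only then do the constants line up with $\zeta'(-2k)$; (ii) handling the two parity cases, $n$ even versus $n$ odd, since $\chi(S^n)$ is $2$ or $0$ and the relevant operators and the form $V_X$ behave differently (recall $V_X$ vanishes for even-dimensional fibres, so for $n$ even the correction disappears and ${\mathcal T}_X={\mathcal T}_{e^X}$ outright); and (iii) checking that the passage from the infinitesimal power series on $\mathfrak g$ to a differential form on $B$ via $\Omega/2\pi i$, combined with the normalising factors $(s(1-s)/2\pi i)^{N^B/2}$ implicit in~\eqref{eq2.24}, reproduces exactly the $\ch(W)^{[4k]}$ appearing in~\eqref{eq3.20} and not some rescaled version. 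Once these normalisation checks are done the statement follows formally; the geometric input is entirely contained in Theorem~\ref{thm3.6} and in K\"ohler's symmetric-space computation.
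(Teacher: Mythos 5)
Your argument --- realising $E$ as $P\times_{SO(n+1)}S^n$, applying Theorem~\ref{thm3.6} with $F=\C$ so that the Mathai--Quillen term drops out, feeding in K\"ohler's equivariant torsion of the round sphere, and converting the resulting invariant power series into forms on $B$ via the substitution \eqref{eq3.15} --- is exactly the route the paper indicates for this corollary, which it states without further detail, citing Bunke and Bismut--Goette. One small inaccuracy worth noting: by the scaling relation \eqref{eq3.18} the correction $V_X(S^n)$ is not a power series in $X$, so its contribution is not merely a degree-zero term as you claim; however it vanishes identically for even $n$, and for odd $n$ (where $\chi(S^n)=0$) its careful treatment is precisely the bookkeeping you flag as the main obstacle, carried out in the cited references.
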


The meaning of the class~$V_{X}(M)$ is not quite clear from Theorem~\ref{thm3.6}. As Bismut explains in~\cite{Bhypo}, the Bismut-Lott torsion of a smooth proper submersion~$p \colon E \to B$ is formally given by evaluating~$V$ on the generator of the natural~$S^{1}$ action on the fibrewise free loop space~$L_{B}E$, viewed as a bundle over~$B$. Although the flat vector bundle~$F$ and its cohomology are not visible in this approach, many properties of~$V_{X}(E/S)$ proved in~\cite{BGeq} mirror well-known properties of Bismut-Lott torsion, including the behaviour under iterated fibrations in Section~\ref{sect3.1} and under Witten deformation in Section~\ref{sect5.1}.

\subsection{The Ma-Zhang subsignature operator}\label{sect3.5}

In section~\ref{sect1.1}, we have constructed Kamber-Tondeur forms by lifting the Chern character to flat vector bundles. In section~\ref{sect2.2}, we have constructed the torsion form as a correction term in a family index theorem. Thus, Bismut-Lott torsion is a double transgression of the Chern character. Ma and Zhang first produce an $\eta$-invariant, which can be regarded as a transgression of the Chern character. Then they derive Bismut-Lott torsion from a transgression of $\eta$-forms in~\cite{MZ1}. In other words, they get torsion forms by a different double transgression. On the way, they give a new analytic proof of Theorems~\ref{thm1.1} and~\ref{thm1.1a}.
Dai and Zhang have recently given a related construction in~\cite{DZ},
where Bismut-Lott torsion appears in the adiabatic limit of a Bismut-Freed
connection form that is related to Ma and Zhang's $\eta$-invariant.

Let~$p \colon E \to B$ be a proper submersion of closed manifolds, where~$B$ is oriented, and let~$F \to E$ be a flat vector bundle, then~$F$ is rationally trivial in the topological $K$-theory of~$E$. Thus there exists an isomorphism~$qF \cong E \times \C^{q \rk F}$ for some positive integer~$q$. Let~$\nabla^{0}$ denote the trivial flat connection on~$E \times \C^{q \rk F}$, then
\begin{equation}\label{eq8.4}
\widehat{\ch}\bigl(F,\nabla^{F}\bigr) = \frac{1}{q} \widetilde{\ch}\bigl(\nabla^{0},\nabla^{qF}\bigr) \in H^{\bullet}(E;\C/\Q).
\end{equation}

Choose~$T^{H}E$, $g^{TM}$, $g^{F}$ as before. We also choose a metric~$g^{TB}$ on~$B$ and put~$g^{TE} = g^{TM} \oplus p^{*}g^{TB}$ using the splitting~$TE = TM \oplus T^{H}E$. Let~$W \to B$ be a Hermitian vector bundle with metric~$g^{W}$ and connection~$\nabla^{W}$. Ma and Zhang consider two operators~$D_{\mathrm{sig}}^{W,F}$ and~$\hat{D}_{\mathrm{sig}}^{W,F}$ on~$\Omega^{\bullet}(E;p^{*}W \oplus F)$. Whereas~$D_{\mathrm{sig}}^{W,F}$ is an honest Dirac-operator if~$g^{F}$ is parallel, the operator~$\hat{D}_{\mathrm{sig}}^{W,F}$ differentiates only in the directions of the fibres. These operators should be viewed as ``quantisations'' in the sense of~\cite{BGV}, applied to the Bismut type superconnection~$\tilde{\A} = \frac{1}{2}(\tilde{\A}' + \tilde{\A}'')$ and the operator~$\tilde{\X}$ of~\eqref{eq2.21}.

If~$B$ is odd-dimensional, then
\begin{equation}\label{eq8.5}
D_{\mathrm{sig}}^{W,F}(r) = D_{\mathrm{sig}}^{W,F} + ir \hat{D}_{\mathrm{sig}}^{W,F}
\end{equation}
is a selfadjoint operator on~$\Omega^{\mathrm{even}}(B;\Omega^{\bullet}(E/B;p^{*}W \oplus F))$ for all~$r \in \R$.
The {\em reduced $\eta$-invariant} of~$D_{\mathrm{sig}}^{W,F}(r)$ is as usual defined as
\begin{equation}\label{eq8.6}
\overline{\eta}\left(D_{\mathrm{sig}}^{W,F}(r)\right) = \frac{1}{2}\left(\eta\bigl(D_{\mathrm{sig}}^{W,F}(r)\bigr) + \dim \ker \bigl(D_{\mathrm{sig}}^{W,F}(r)\bigr)\right) \in \R/\Z.
\end{equation}
For the virtual bundle~$H(E/B;F) \to B$, one defines similarly
\begin{equation}\label{eq8.7}
\overline{\eta}\left(D_{\mathrm{sig}}^{W,H}(r)\right) = \sum_{k}(-1)^{k}\,\overline{\eta}\left(D_{\mathrm{sig}}^{W,H^{k}(E/B;F)}(r)\right) \in \R/\Z.
\end{equation}

For~$\varepsilon > 0$, let~$D_{\mathrm{sig},\varepsilon}^{W,F}(r)$ denote the analogous operator, where the metric~$g^{TB}$ has been replaced by~$\frac{1}{\varepsilon}g^{TB}$.
The reduced $\eta$-invariants are related in the adiabatic limit~$\varepsilon \to 0$.

\begin{Theorem}[Ma and Zhang~\cite{Zss}, \cite{MZ1}]\label{thm8.2}
One has
\begin{equation}\label{eq8.8}
\lim_{\varepsilon \to 0} \overline{\eta}\left(D_{\mathrm{sig},\varepsilon}^{W,F}(r)\right) = \overline{\eta}\left(D_{\mathrm{sig}}^{W,H}(r)\right) \in \R/\Z\;.
\end{equation}
\end{Theorem}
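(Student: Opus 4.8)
The plan is to follow the general strategy of adiabatic limit computations for $\eta$-invariants, adapting the method of Bismut--Cheeger and Dai to the present family of (sub)signature operators. First I would set up the precise analytic framework: on $\Omega^{\bullet}(B;\Omega^{\bullet}(E/B;p^{*}W\oplus F))$, rescale the base metric by $\varepsilon^{-1}$ and track how the operator $D_{\mathrm{sig},\varepsilon}^{W,F}(r)$ degenerates. The key point is that in the adiabatic limit the operator block-decomposes according to the filtration by fibre-degree, so that the spectrum splits into a ``large'' part, coming from the fibrewise operator $d^{M,*}-d^{M}$ with eigenvalues bounded away from zero, and a ``small'' part, whose eigenvalues tend to zero at a controlled rate and whose associated eigenspaces are modelled on $H^{\bullet}(E/B;F)$ with its $L^{2}$-metric. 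This is exactly the mechanism behind the limits \eqref{eq2.22} and \eqref{eq2.23}; here one needs the refined version at the level of spectral data rather than just supertraces.

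The second step is to show that the large eigenvalue part contributes nothing to the reduced $\eta$-invariant in the limit. This follows because the spectral flow and the $\eta$-function of the large part are, up to terms that vanish as $\varepsilon\to 0$, symmetric about zero: the fibrewise chirality operator (the analogue of $(-1)^{N^{V}}$ paired with fibrewise Hodge $*$) anticommutes with the leading symbol on the orthogonal complement of the harmonic forms, which forces the large-eigenvalue $\eta$-contribution to be an even function in a way that kills it modulo $\Z$. One must be careful that the off-diagonal coupling terms in $\tilde{\X}$ (the $\iota_{\Omega}$, $\eps_{\Omega}$ and curvature pieces, and the $ir\hat{D}_{\mathrm{sig}}^{W,F}$ perturbation) are of lower order with respect to the adiabatic scaling and hence do not disturb this cancellation; this is where a careful Getzler-type rescaling argument, already invoked for \eqref{eq2.22}, is needed.

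The third step is to identify the small eigenvalue part with the operator $D_{\mathrm{sig}}^{W,H}(r)$ built directly from the finite-dimensional flat bundle $H(E/B;F)$ equipped with $g_{L^{2}}^{H}$ and its Gau\ss-Manin connection. One projects onto the approximate kernel of the fibrewise operator and shows that the induced operator converges, after the appropriate normalisation, to precisely the subsignature-type operator on $\Omega^{\mathrm{even}}(B;H^{\bullet}(E/B;F))$; the alternating sum over $k$ in \eqref{eq8.7} appears because the fibrewise cohomology enters as the virtual bundle \eqref{eq1.10}. Continuity of the reduced $\eta$-invariant under such convergence — here one must control possible jumps coming from eigenvalues crossing zero, i.e.\ a spectral-flow bookkeeping argument in $\R/\Z$ — then yields \eqref{eq8.8}.

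I expect the main obstacle to be precisely this last point: reduced $\eta$-invariants are only continuous in $\R/\Z$ away from spectral flow, so one has to rule out that the small eigenvalues which are collapsing to zero contribute spurious integer jumps, and simultaneously that the decoupling of large and small parts is uniform enough near $\varepsilon=0$. Handling this requires a uniform gap estimate separating the two spectral regimes for all small $\varepsilon$ (not just in the limit), together with an explicit analysis of the $r$-dependent perturbation $ir\hat{D}_{\mathrm{sig}}^{W,F}$ to ensure selfadjointness and the absence of large spectral flow along the adiabatic family. Once the uniform spectral splitting is in place, the remaining verifications are the standard Bismut--Cheeger--Dai computations.
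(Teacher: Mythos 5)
First, a point of reference: the paper you are reading does not prove Theorem~\ref{thm8.2} at all --- it is quoted from Ma--Zhang \cite{Zss}, \cite{MZ1} and then \emph{used} to deduce Theorem~\ref{thm1.1a}. So your proposal has to be measured against the cited proof, which indeed lives in the Bismut--Cheeger--Dai adiabatic-limit tradition you invoke; at that level of generality your outline is pointed in the right direction.

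However, two of your key steps have genuine gaps. (i) The ``uniform gap estimate separating the two spectral regimes'' that your whole scheme rests on is false in general. In the adiabatic limit the small eigenvalues do not form a single cluster modelled on $H^{\bullet}(E/B;F)$: as Dai's analysis \cite{Dai} shows (and as the survey itself recalls in the transfer discussion of Section~\ref{sect3.1}), there are families of eigenvalues decaying like different powers of $\varepsilon$, governed by the higher terms $(E_k,d_k)$, $k\ge 2$, of the fibrewise Leray spectral sequence. These intermediate modes are not seen by the operator $D_{\mathrm{sig}}^{W,H}(r)$ on the cohomology bundle, and their $\eta$-contributions must be identified as \emph{integers} (signature-type invariants of finite-dimensional complexes) --- which is precisely why the statement only holds in $\R/\Z$. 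Your two-block splitting would only be available when the spectral sequence degenerates at $E_2$, which is not assumed. (ii) Your claim that the large-eigenvalue part contributes nothing because a chirality symmetry makes its $\eta$-contribution ``even'' is unsubstantiated and not correct as stated: in Bismut--Cheeger--Dai limits the nonzero modes contribute a local term of $\eta$-form type, here an integral of the shape $\int_B L(TB)\,\ch(W)\,(\cdots)$ closely related to the generalised $\eta$-form $\hat\eta_r$ of~\eqref{eq8.12}; showing that this term disappears modulo $\Z$ (or identifying it exactly) is a substantial part of Ma--Zhang's argument, not a parity triviality. By contrast, the difficulty you single out as the main obstacle --- spectral flow producing integer jumps --- is largely moot for an $\R/\Z$-valued statement, since $\overline{\eta}$ changes by integers under eigenvalue crossings; the real work lies in the singular nature of the limit and in points (i)--(ii) above.
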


\begin{proof}[Proof of Theorem~\ref{thm1.1a}] The proof for the imaginary part of~$\widehat{\ch}$ uses the identities
\begin{equation}\label{eq8.9}
\begin{aligned}
\frac{\partial}{\partial r}\bigg|_{r=0} \overline{\eta}\left(D_{\mathrm{sig},\varepsilon}^{W,F}(r)\right) &= \int_{B}L(TB)\, \ch(W)\, tr_{E/B}^{*}\sum_{k=0}^{\infty} c'_{k} \Im \widehat{\ch}(F)^{[2k+1]} \\
\mbox{and } \frac{\partial}{\partial r}\bigg|_{r=0} \overline{\eta} \left(D_{\mathrm{sig}}^{W,H}(r)\right) &= \int_{B}L(TB)\,\ch(W) \sum_{k=0}^{\infty} c'_{k}\Im \widehat{\ch}(H)^{[2k+1]}
\end{aligned}
\end{equation}
for some constants~$c'_{k} \ne 0$, where~$L(TB)$ denotes the Hirzebruch $L$-class. Because~$H^{\mathrm{even}}(B;\R)$ is spanned by the values of~$L(TB)\,\ch(W)$ for all complex vector bundles~$W$, one gets the imaginary part of~\eqref{eq1.16b} in Theorem~\ref{thm1.1a} from Theorem~\ref{thm8.2} by comparison of coefficients in~\eqref{eq8.9}.

The real part also follows from Theorem~\ref{thm8.2} because
\begin{equation}\label{eq8.10}
\overline{\eta}\bigl(D_{\mathrm{sig},\varepsilon}^{W,F}\bigr) - \rk F\, \overline{\eta}\left(D_{\mathrm{sig},\varepsilon}^{W}\right) = \int_{B}L(TB)\,\ch(W)\, tr_{E/B}^{*}\Re \widehat{\ch}(F) \in \R/\Q
\end{equation}
and a similar equation holds for the two virtual bundles~$H(E/B;F) \to B$ and~$\rk F \cdot H(E/B;\C) \to B$. To complete the proof, one needs that
\begin{equation}\label{eq8.11}
\widehat{\ch}(H(E/B;\C)) = 0 \in H^{\bullet}(B;\C/\Q)\;,
\end{equation}
which was already proved for evendimensional~$M$ by Bismut in~\cite{Bccs}.
\end{proof}

To recover Bismut-Lott torsion and Theorem~\ref{thm1.3} from this approach, one considers a generalised $\eta$-form
\begin{multline}\label{eq8.12}
\hat{\eta}_{r} = (2\pi i)^{-\frac{N^{B}+1}{2}}\int_{0}^{\infty}\Biggl(\tr_{s}\left(\left(\frac{\partial}{\partial t}\Bigl(\tilde{\A}_{t}+\frac{ir}{2}\tilde{\X}_{t}\Bigr)\right)e^{-\left(\tilde{\A}_{t} + \frac{ir}{2}\tilde{\X}_{t}\right)^{2}}\right)\\ - \frac{ir \cdot a}{\sqrt{1 + r^{2}}} t^{-\frac{3}{2}}\Biggr)\, dt
\end{multline}
for some locally computable function~$a \colon B \to \R$.

\begin{Theorem}[Ma and Zhang~\cite{MZ1}]\label{thm8.3} For certain constants~$c''_{k}\ne0$, one has
\begin{equation}\label{eq8.13}
\frac{\partial \hat{\eta}_{r}}{\partial r}\Bigr|_{r=0} = \sum_{k=0}^{\infty} c''_{k}\,d {\mathcal T}(T^{H}E, g^{TM},g^{F}).
\end{equation}
\end{Theorem}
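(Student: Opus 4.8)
The plan is to identify the $r$-derivative at $r=0$ of the generalised $\eta$-form $\hat\eta_r$ with the transgression term that appears in the heat-kernel construction of the Bismut-Lott torsion, following the pattern already visible in~\eqref{eq2.9}--\eqref{eq2.14}. Recall that $\tilde{\A}_t$ is the Bismut-type superconnection $\tfrac12(\tilde{\A}'+\tilde{\A}'')$ and $\tilde{\X}_t = \tilde{\A}''-\tilde{\A}'$ is the fibrewise-odd operator of~\eqref{eq2.21}. The first step is to expand $\bigl(\tilde{\A}_t+\tfrac{ir}{2}\tilde{\X}_t\bigr)^2$ to first order in $r$: one gets $\tilde{\A}_t^2 + \tfrac{ir}{2}\{\tilde{\A}_t,\tilde{\X}_t\} + O(r^2)$, and the anticommutator $\{\tilde{\A}_t,\tilde{\X}_t\}$ can be rewritten, using that $(\tilde{\A}')^2=(\tilde{\A}'')^2=0$ by flatness, in terms of $\tilde{\X}_t^2$. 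Differentiating $e^{-(\tilde{\A}_t+\frac{ir}{2}\tilde{\X}_t)^2}$ in $r$ at $r=0$ via Duhamel's formula and feeding it into the supertrace, one isolates a supertrace expression built out of $\tilde{\X}_t$, $\tilde{\X}_t^2$ and $\tfrac{\partial}{\partial t}(\cdots)$ acting against $e^{-\tilde{\A}_t^2}=e^{\tilde{\X}_t^2}$ (up to the usual sign and $2\pi i$ conventions), which is exactly the integrand transgressed in Definition~\ref{Def2.4}.

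The second step is to integrate by parts in $t$. The integrand of~\eqref{eq8.12} contains a factor $\tfrac{\partial}{\partial t}(\tilde{\A}_t+\tfrac{ir}{2}\tilde{\X}_t)$; after differentiating the exponential and applying $\tr_s$, the standard Bismut-Lott manipulation (as in~\cite{BL}) turns the $t$-integral of this total-derivative-type expression into boundary contributions at $t=0$ and $t=\infty$ together with a $dt/t$ integral of $\str_{\Omega^\bullet}\bigl(N^{\tilde E/\tilde B}(1+2\tilde{\X}_t^2)e^{\tilde{\X}_t^2}\bigr)$, precisely the object whose regularised $t$-integral defines $\mathcal T(T^HE,g^{TM},g^F)$ in~\eqref{eq2.24}. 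The role of the counterterm $-\tfrac{ir\cdot a}{\sqrt{1+r^2}}t^{-3/2}$ in~\eqref{eq8.12} is to cancel the short-time divergence coming from the local index density; one checks that $\tfrac{\partial}{\partial r}\big|_{r=0}$ of this counterterm produces exactly the constant correction term appearing inside~\eqref{eq2.24}, so that after differentiation the divergences match and the regularisations agree. Applying $d$ on $B$ (which is what Theorem~\ref{thm1.3} evaluates) to the resulting expression and using that the form~\eqref{eq2.25} is closed, the boundary terms at $t=0$ and $t=\infty$ reproduce, via~\eqref{eq2.22} and~\eqref{eq2.23}, the right-hand side $\int_{E/B}e(TM,\nabla^{TM})\cho(F,g^F) - \cho(H,g_{L^2}^H)$, i.e.\ $d\mathcal T(T^HE,g^{TM},g^F)$, up to an overall nonzero constant depending on the normalisation, which is absorbed into $c_k''$.

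The main obstacle will be the bookkeeping of the transgression in the presence of the extra operator $\hat{D}_{\mathrm{sig}}$, equivalently the extra $\tfrac{ir}{2}\tilde{\X}_t$ term: unlike the pure family index setting, the perturbed superconnection $\tilde{\A}_t+\tfrac{ir}{2}\tilde{\X}_t$ is not a flat superconnection for $r\ne 0$, so one must track how $\tfrac{\partial}{\partial r}\big|_{r=0}$ interacts with the non-flatness and verify that exactly the first-order term survives and has the claimed form. One also has to be careful that the zeta-type regularisation implicit in the definition of $\hat\eta_r$ (absolute convergence of the $t$-integral after the counterterm) is preserved under $\tfrac{\partial}{\partial r}$, which requires uniform control of the small- and large-$t$ asymptotics of the heat-type kernels in a neighbourhood of $r=0$; this is where Getzler rescaling and the finite-dimensional large-$t$ analysis from~\cite{BL} enter, now applied to the $r$-differentiated kernel. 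Once these analytic points are settled, identifying the constants $c_k''$ is a routine comparison of the $(2\pi i)$-powers and combinatorial factors between~\eqref{eq8.12} and~\eqref{eq2.24}, and nonvanishing of each $c_k''$ follows because the relevant universal polynomial coefficient is a nonzero rational multiple of a power of $2\pi i$, exactly as for the constants in~\eqref{eq1.7c}.
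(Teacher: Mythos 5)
The paper itself offers no proof of Theorem~\ref{thm8.3}: it is quoted from Ma and Zhang~\cite{MZ1}, so there is no argument in the text to compare yours with. Judged on its own, your sketch has the right global philosophy (the ``double transgression'' described in Section~\ref{sect3.5}: exhibit $\partial_r\hat\eta_r|_{r=0}$, after regularisation, as a transgression in $t$ related to the Bismut-Lott integrand), but its computational core fails. Since $\tilde{\A}'_t$ and $\tilde{\A}''_t$ are flat, one has, with $\tilde{\A}_t=\tfrac12(\tilde{\A}'_t+\tilde{\A}''_t)$ and $\tilde{\X}_t=\tilde{\A}''_t-\tilde{\A}'_t$, that the supercommutator of the two odd operators is $\{\tilde{\A}_t,\tilde{\X}_t\}=(\tilde{\A}''_t)^2-(\tilde{\A}'_t)^2=0$, so that $(\tilde{\A}_t+\tfrac{ir}{2}\tilde{\X}_t)^2=\tilde{\A}_t^2-\tfrac{r^2}{4}\tilde{\X}_t^2$ depends on $r$ only to second order. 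Hence your first step --- Duhamel applied to the exponential to produce a first-order term that is then ``rewritten in terms of $\tilde{\X}_t^2$'' --- produces nothing at all: the entire derivative at $r=0$ comes from the prefactor and equals, up to the counterterm, $\tfrac{i}{2}\int_0^\infty\tr_s\bigl((\partial_t\tilde{\X}_t)\,e^{\tilde{\X}_t^2/4}\bigr)\,dt$. Note also that $e^{-\tilde{\A}_t^2}=e^{\tilde{\X}_t^2/4}$, not $e^{\tilde{\X}_t^2}$; the factor $4$ is a genuine rescaling to be tracked, not a sign or $2\pi i$ convention.

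The essential work is then to identify this regularised integral with $\sum_k c''_k\,d\mathcal T$, and your second step does not accomplish it. The integrand of~\eqref{eq2.24} does not arise as boundary terms in $t$ of the $\hat\eta$-integrand; it is the $dt$-component $\iota_{\partial/\partial t}$ of the closed odd form~\eqref{eq2.25}, cf.~\eqref{eq2.12}, and in particular it contains the number operator $N^{\tilde E/\tilde B}$, whose appearance your sketch never accounts for. What actually has to be proved is an identity of the shape $\tr_s\bigl((\partial_t\tilde{\X}_t)e^{\tilde{\X}_t^2/4}\bigr)=\partial_t(\cdots)+d_B(\cdots)$, in which the $t$-boundary contributions reproduce the limits~\eqref{eq2.22}, \eqref{eq2.23} and the remaining piece integrates, after the counterterm (whose $r$-derivative is $ia\,t^{-3/2}$ and is not ``exactly'' the $(1-2t)e^{-t}$ corrections of~\eqref{eq2.24}) has cancelled the small-$t$ divergence, to the degreewise renormalised torsion form. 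Finally, the closing move ``apply $d$ on $B$ and recover the right-hand side of Theorem~\ref{thm1.3}'' cannot yield~\eqref{eq8.13}: at best it compares exterior derivatives of both sides, which is strictly weaker than the asserted identity of forms, and as written it is not even consistent, since $d(d\mathcal T)=0$. These are precisely the points where the genuine Ma--Zhang computation, with uniform small- and large-$t$ control of the $r$-differentiated kernels (a need you do correctly flag), has to be carried out.
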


Dai and Zhang will give a more explicit construction in~\cite{DZ}.
These last results seem to indicate a strong relation between Bismut-Lott torsion and $\eta$-forms that still has to be explored. A similar relation has been established by Braverman and Kappeler in a definition of complex-valued Ray-Singer torsion in~\cite{BK} for single manifolds.

\section{Igusa-Klein torsion}\label{sect4}
We have seen in sections \ref{sect1}--\ref{sect3} how to establish an index theorem for flat vector bundles using methods from local index theory for families, and how to discover Bismut-Lott torsion in a natural refinement of this index theorem. It is somewhat surprising that homotopy theoretical methods from differential topology lead to an invariant that is very closely related to Bismut-Lott torsion. There are several slightly different approaches to this topological higher torsion by Igusa and Klein~\cite{Kphd}, \cite{Ibuch}, \cite{IKpict}, \cite{IKfilt}.
In this section, we focus on Igusa-Klein torsion as described in~\cite{Ibuch}.
In section~\ref{sect6},
we discuss the approach by Dwyer, Weiss and Williams~\cite{DWW}.

\subsection{Generalised Morse functions and filtered complexes}\label{sect4.1}
It is well-known that smooth manifolds admit Morse functions. If~$p \colon E \to B$ is a smooth proper submersion, then in general, there is no function~$h \colon E \to \R$ that is a Morse function on every fibre of~$p$. However, by results of Igusa~\cite{Imorse} and Eliashberg and Mishachev~\cite{EM}, there always exist generalised Morse functions.

By a birth-death singularity of~$h \colon E \to \R$, we mean a fibrewise critical point of type A2 that is unfolded over~$B$. In other words, there exist~$k$,
a function~$h_0$ on~$B$
and coordinates~$u_1$, \dots\ on~$B$ and~$x_1$, \dots, $x_n$ along the fibres
such that locally,
	$$h(x,u)=h_0(u)+\frac{x_n^3}3-u_1x_n
	-\frac{x_1^2+\dots+x_k^2}2+\frac{x_{k+1}^2+\dots+x_{n-1}^2}2\;.$$
Birth-death singularities occur over a two-sided immersed submanifold~$B_{0} \subset B$ given by~$u_1=0$ in the coordinates above. Two fibrewise Morse critical points of adjacent indices over the ``positive'' side of~$B$ come together in a fibrewise cubical singularity. In a neighbourhood over the ``negative'' side, the function is regular.

Let~$C = C_{\mathrm M} \cup C_{\mathrm{bd}} \subset E$ denote the submanifold of fibrewise critical points of~$h$. Note that the submanifold~$C_{\mathrm M}$ of Morse fibrewise critical points of~$h$ locally covers~$B$, and that the submanifold~$C_{\mathrm{bd}}$ of birth-death critical points locally bounds two com\-ponents of~$C_{\mathrm M}$. After fixing a fibrewise metric~$g^{TM}$, the negative eigenspaces of the Hessian of~$h$ form a vector bundle~$T^{u}M \subset TM|_{C_{\mathrm M}}$ over~$C_{\mathrm M}$, whose rank is given by the Morse index~$\operatorname{ind}h$. At the birth-death singularities~$C_{\mathrm{bd}}$, the natural extension of~$T^{u}M$ of the two adjacent components of~$C_{\mathrm M}$ differ by an oriented trivial line bundle, the ``cubical direction''.

\begin{Definition}\label{Def4.1} A {\em generalised fibrewise Morse function} on~$p \colon E \to B$ is a function~$h \colon E \to \R$ that has only Morse and birth-death type fibrewise singularities. A {\em framed function} is a generalised fibrewise Morse function together with trivialisations of~$T^{u}M$ over each connected component of~$C_{\mathrm M}$ that extend up to the boundary, such that the two frames at each point of~$C_{\mathrm{bd}}$ differ only by the preferred generator of the cubical direction.
\end{Definition}

\begin{Theorem}[Igusa~\cite{Imorse}] \label{thm4.2} Let~$p \colon E \to B$ be a smooth fibre bundle with typical fibre~$M$. If~$\dim M \ge \dim B$, there exists a framed function, and if~$\dim M > \dim B$, it is unique up to homotopy.
\end{Theorem}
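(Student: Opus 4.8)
The plan is to establish the existence of framed functions by combining the $h$-principle for generalised Morse functions with an elementary obstruction-theoretic argument for the framing. The statement is really two results glued together: first, the existence (and, in the appropriate range, uniqueness) of a fibrewise generalised Morse function $h\colon E\to\R$; and second, the possibility of equipping such an $h$ with the extra framing data of Definition~\ref{Def4.1}. The first part is the content of the Igusa--Eliashberg--Mishachev $h$-principle, which I would invoke essentially as a black box: generalised Morse functions (functions with only $A_1$ and $A_2$ fibrewise singularities) satisfy a one-parameter $h$-principle, so their existence up to homotopy is governed by the existence of a formal solution, i.e.\ a suitable fibrewise jet section. One then checks that when $\dim M\ge\dim B$ the relevant jet space is highly enough connected (the fibrewise singularity set has codimension $\ge\dim M$ in $E$, hence codimension $\ge\dim M-\dim B$ over $B$) for the obstructions to formal existence to vanish, and when $\dim M>\dim B$ the space of formal solutions is connected, giving uniqueness up to homotopy.

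Next I would address the framing. Over each component of $C_{\mathrm M}$ the unstable bundle $T^uM$ is a real vector bundle of rank $=\operatorname{ind}h$ over a manifold that locally covers $B$; we want to trivialise it compatibly across the birth-death locus $C_{\mathrm{bd}}$, where the unstable bundles of the two adjacent sheets differ by the oriented trivial ``cubical'' line bundle. The obstruction to trivialising $T^uM$ over a given component lives in $H^{*}(C_{\mathrm M};\pi_{*-1}O(\operatorname{ind}h))$, and the compatibility condition at $C_{\mathrm{bd}}$ says that the two chosen frames agree after adjoining the preferred generator of the cubical direction. The key observation — and this is the point of Igusa's stabilisation trick — is that one is always free to stabilise: replacing $h$ by $h+$ (a quadratic form in new fibre directions), or more precisely passing to a stabilisation in the sense of adding cancelling pairs, increases $\operatorname{ind}h$ and hence allows one to land in the stable range where $O(k)\simeq O$ and the relevant homotopy groups stabilise, at which point the obstructions become computable and, after further stabilisation if necessary, can be killed. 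In the hypotheses of Theorem~\ref{thm4.2} as stated, one does not even need to change $h$: the range $\dim M\ge\dim B$ already forces the base of the relevant bundles to be low-dimensional relative to the fibre of the frame bundle, so the obstruction groups vanish and the framings exist; uniqueness in the range $\dim M>\dim B$ follows by the same count applied one dimension up.

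The remaining technical point is the behaviour at $C_{\mathrm{bd}}$: one must check that the framing can be chosen to \emph{extend up to the boundary} of each component of $C_{\mathrm M}$ and to satisfy the prescribed jump condition there. Here I would argue locally in the birth-death normal form written in the excerpt — near a birth-death point the two sheets of $C_{\mathrm M}$ and the cubical direction are given by explicit coordinates $x_1,\dots,x_n$ — so that a local framing compatible with the cubical-direction jump exists by inspection, and then patch the local choices together using a partition of unity on $B_0\subset B$ together with the (stable-range) vanishing of the obstruction to extending a framing defined near $C_{\mathrm{bd}}$ over all of $C_{\mathrm M}$.

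The main obstacle I expect is keeping the two $h$-principles in play straight: the existence of $h$ itself is a \emph{parametrised} $h$-principle over $B$ (Igusa's work on the space of framed functions, building on Eliashberg--Mishachev), and the framing is an additional piece of bundle data whose obstruction theory must be run \emph{relative} to the already-chosen $h$ and simultaneously over the stratified critical locus $C=C_{\mathrm M}\cup C_{\mathrm{bd}}$. The genuinely delicate bookkeeping is at the birth-death stratum, where one is asking for a framing of $T^uM$ that is not merely continuous but matches a prescribed stabilisation across a codimension-one wall in $B$; making this precise is exactly what Igusa does in~\cite{Imorse}, and I would follow that argument rather than attempt a shortcut.
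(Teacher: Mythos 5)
The paper itself offers no proof of Theorem~\ref{thm4.2} --- it is quoted from Igusa~\cite{Imorse} --- so your proposal has to be judged against Igusa's actual argument, and there it has a genuine gap. The decomposition you propose (first produce a fibrewise generalised Morse function by the Igusa/Eliashberg--Mishachev $h$-principle, then add the framing by obstruction theory relative to that fixed~$h$) fails at exactly the step you settle by a dimension count. For a fixed generalised Morse function the unstable bundle $T^uM\to C_{\mathrm M}$ need not be trivialisable at all: its rank is the Morse index, which has nothing to do with $\dim B$, and already the first obstruction $w_1(T^uM)\in H^1(C_{\mathrm M};\Z/2)$ lies in a group that vanishes for no dimension reason, since $C_{\mathrm M}$ locally covers $B$ and so has dimension $\dim B$, while $\pi_0O(k)=\Z/2$ for every~$k$. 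Concretely, for the Klein bottle fibred over $S^1$ with fibre $S^1$ and the fibrewise height function, the unstable line bundle over the circle of fibrewise maxima is nonorientable, so this $h$ admits no framing whatsoever, even though $\dim M\ge\dim B$; no partition-of-unity patching or ``stable range'' vanishing can produce one. Your stabilisation remark does not repair this: adding a quadratic form in new fibre directions changes the bundle $E\to B$, which is not permitted, and stabilising $O(k)$ to $O$ does not kill the low-degree ($\pi_0$, $\pi_1$) obstructions anyway.

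The missing idea is that the function and the framing must be deformed \emph{simultaneously}: to remove the obstruction one has to change $h$ itself, creating and cancelling fibrewise critical points through birth--death singularities so as to trade away sheets of $C_{\mathrm M}$ carrying nontrivialisable unstable bundles. That is the real content of Igusa's framed function theorem --- the space of framed functions on a closed $n$-manifold is $(n-1)$-connected --- proved by a long parametrised argument in which framing data are built into the deformation from the start; the fibrewise statement of Theorem~\ref{thm4.2} then follows by a section-extension argument over $B$, the hypotheses $\dim M\ge\dim B$ and $\dim M>\dim B$ entering through this connectivity compared with $\dim B$ (existence, respectively uniqueness up to concordance of sections), not through vanishing of $H^{*}(C_{\mathrm M};\pi_{*-1}O(\operatorname{ind}h))$. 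The wrinkling results of~\cite{EM} that you invoke give the generalised Morse function; the framed refinement is the genuinely hard step, and deferring it, as you do at the end, to ``exactly what Igusa does'' concedes the substance of the theorem rather than proving it.
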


Here, uniqueness up to homotopy means that if~$h_{0},h_{1} \colon E \to \R$ are two framed functions, then there exists a framed function~$h \colon E \times [0,1] \to \R$ that restricts to~$h_{j}$ at~$E \times \{j\}$ for~$j = 0$, $1$.

If the dimension of the fibres is too small to apply Theorem~\ref{thm4.2}, one can take cross products with manifolds of Euler number 1, for example~$\R P^{2n}$. One can check that this will not alter the torsion classes of Igusa and Klein that we are going to introduce, so that the following constructions are valid for fibre bundles of arbitrary dimensions.

\subsection{Filtered chain complexes and the Whitehead space}\label{sect4.2}
We assume that we are given a smooth fibre bundle~$p \colon E \to B$ and a proper framed function~$h \colon E \to \R$ with finitely many fibrewise critical points over small subsets of~$B$. Let~$F \to E$ be a flat vector bundle. 

Over a small open subset~$U \subset B$, one can use~$h$ to filter the singular chain complexes of the fibres over~$U$. The filtered chain complexes are quasiisomorphic to a filtered chain complex on the vector space
\begin{equation}\label{eq4.1}
V'_{x} = \bigoplus_{C \in C'_{M}|_{x}} F_{c}.
\end{equation}
Here~$C'_{M}|_{x}$ is a subset of~$C_{\mathrm M}|_{x}$, where some pairs of components of~$C_{\mathrm M}$ near birth-death singularities are omitted. Both the filtration and the quasiisomorphism are natural and unique up to contractible choice.   

Moreover, the two leaves of~$C_{\mathrm M}$ near a birth-death singularity generate a direct summand isomorphic to
\begin{equation}\label{eq4.2}
  \begin{CD}
    0@>>>F@>\id>>F@>>>0
  \end{CD}
\end{equation}
after applying another quasiisomorphism that is again unique up to contractible choice. Adding or deleting a subcomplex of the form~\eqref{eq4.2} is called an {\em elementary expansion} or {\em elementary collapse}.

Suppose now that the flat bundle~$F$ is fibrewise acyclic and comes with an $R$-structure for a suitable ring~$R$ as in Section~\ref{sect3.2} above. Also assume that the holonomy of~$F$ if contained in some group~$G \subset GL_{r}(R)$, with~$r = \rk F$. A typical choice would be~$R = M_{r}(\C)$ and~$G = U(r)$ with~$r \in {\mathbb N}$. In~\cite{Ibuch}, Igusa constructs a classifying space for acyclic locally filtered finite dimensional chain complexes over~$R$ with holonomy~$G$, up to filtered quasiisomorphisms and elementary expansions and collapses. This space is called the {\em acyclic Whitehead space}~$Wh^{h}(R,G)$.
We give a slightly more explicit description in Section~\ref{sect5.2}.

\begin{Theorem}[Igusa~\cite{Ibuch}]\label{thm4.3} 
Each generalised fibrewise Morse function~$h \colon E \to \R$ gives rise to a classifying map
\begin{equation}\label{eq4.3}
\xi_{h}(E/B;F) \colon B \longrightarrow  Wh^{h}(R,G)
\end{equation}
that is unique up to homotopy.
\end{Theorem}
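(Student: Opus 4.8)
The plan is to construct $\xi_h(E/B;F)$ locally over a good cover of $B$ and then glue, exploiting that $Wh^h(R,G)$ is by construction a classifying space for the relevant algebraic data. I would fix a locally finite cover $\{U_\alpha\}$ of $B$ by contractible open sets small enough that over each $U_\alpha$ the function $h$ has only finitely many fibrewise critical points and the critical locus $C\cap p^{-1}(U_\alpha)$ is standard. Using the framing of $T^uM$ from Definition~\ref{Def4.1} to trivialise the fibrewise unstable bundles, the constructions recalled just before the theorem produce over $U_\alpha$ a locally filtered, fibrewise acyclic complex of flat $R$-module bundles on the bundle $V'_\alpha\to U_\alpha$ with structure group $G$, together with a filtered quasiisomorphism to a fixed model of the fibrewise singular chains. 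By the defining property of $Wh^h(R,G)$, this datum is classified by a map $\xi_\alpha\colon U_\alpha\to Wh^h(R,G)$, canonical up to the contractible space of auxiliary choices.

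Next I would check that on an overlap $U_\alpha\cap U_\beta$ the two local data differ only by moves that $Wh^h(R,G)$ already identifies. There are exactly two sources of discrepancy. Near a birth-death singularity one must choose which adjacent pair of sheets of $C_{\mathrm M}$ to delete in passing from $C_{\mathrm M}$ to $C'_{\mathrm M}$; changing the choice alters $V'$ by a summand of the form~\eqref{eq4.2}, i.e.\ by an elementary expansion or collapse. Apart from that, the filtration and the quasiisomorphism to the fibrewise chains are pinned down only up to contractible choice; but since both local complexes over $U_\alpha\cap U_\beta$ are filtered quasiisomorphic to the same fibrewise singular complex, they are filtered quasiisomorphic to one another, canonically up to contractible choice. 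Hence $\xi_\alpha$ and $\xi_\beta$ agree on $U_\alpha\cap U_\beta$ up to a preferred homotopy, which itself is preferred up to a preferred higher homotopy, and so on.

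The main obstacle will be turning this coherent system of local maps into an honest map $B\to Wh^h(R,G)$. This is a descent problem: one must package the filtered complexes, the quasiisomorphisms, the elementary expansions and collapses, and all the contractible spaces of choices into a sheaf of categories (equivalently, a simplicial presheaf) over $B$ for which a global section is the same datum as a homotopy class of maps into $Wh^h(R,G)$. In practice one triangulates $B$ subordinate to the cover $\{U_\alpha\}$, assigns to each simplex its filtered complex together with the identifications to the complexes of its faces, and extends the classifying map over the skeleta of $B$ by induction; at each stage the obstruction to extending lies in the weakly contractible space of choices, hence vanishes. This inductive construction, and the verification that it is well posed, is precisely the bookkeeping carried out in~\cite{Ibuch}, and it is the technical core of the statement.

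Finally, I would deduce uniqueness up to homotopy of $\xi_h(E/B;F)$ for a fixed $h$ from the same contractibility: any two runs of the construction differ by choices lying in a contractible space, so interpolating them by a one-parameter family of choices and applying the descent argument above to the pulled-back bundle over $B\times[0,1]$ yields a homotopy between the two classifying maps. (That $\xi_h$ in addition depends only on the homotopy class of $h$ through framed functions would follow by the same mechanism applied to the connecting framed function supplied by Theorem~\ref{thm4.2}, but this goes beyond the assertion of Theorem~\ref{thm4.3} itself.)
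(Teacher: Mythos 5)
Your outline follows essentially the same route as the paper's discussion in Section~\ref{sect4.2}: filter the fibrewise singular chains by $h$ over small subsets of $B$ to get complexes on $V'$ that are natural up to contractible choice, absorb the birth-death ambiguities as elementary expansions and collapses of the form~\eqref{eq4.2}, and invoke the universal property of $Wh^h(R,G)$ as Igusa's classifying space, with uniqueness coming from the contractibility of the spaces of choices. The paper likewise defers the simplicial/descent bookkeeping to Igusa's construction in~\cite{Ibuch}, so your proposal is a faithful rendering of the intended argument.
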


Together with Theorem~\ref{thm4.2}, one can associate to a smooth fibre bundle~$p \colon E \to B$ as above and a flat, fibrewise acyclic vector bundle~$F \to E$ with $R$-structure and holonomy group~$G$ a unique homotopy class of maps
\begin{equation}\label{eq4.4}
\xi(E/B;F) = \xi_{h}(E/B;F) \colon B \longrightarrow Wh^{h}(R,G),
\end{equation}
by choosing~$h$ to be a framed function.

Assume that~$G$ preserves a Hermitian metric, in other words, that~$F$ carries a parallel metric.
Then Igusa constructs cohomology classes
\begin{equation}\label{eq4.5}
  \tau=\sum_{k=1}^{\infty}\tau_{2k}\qquad\text{with}\qquad
  \tau_{2k}\in H^{4k}(Wh^{h}(R,G);\R)
\end{equation}
that are related to the Kamber-Tondeur classes of Section~\ref{sect1.1}.
These classes are natural under pairs of compatible ring and group homomorphisms~$(R,G)\to(S,H)$.
In particular,
it is enough to construct them for~$R=M_r(\C)$ and~$G=U(n)$.

\begin{Definition}\label{Def4.4} The {\em Igusa-Klein torsion} of a smooth fibre bundle~$p \colon E \to B$ as above and a flat fibrewise acyclic vector bundle~$F \to E$ with a parallel Hermitian metric is defined as
\begin{equation}\label{eq4.5b}
  \tau(E/B;F) = \xi(E/B;F)^{*}\,\tau \in H^{4\bullet}(B;\R)\;.
\end{equation}
\end{Definition}

Igusa also explains how to define~$\xi(E/B;F)$ and~$\tau(E/B;F)$ if~$H^{\bullet}(E/B;F) \to B$ is a trivial bundle, or more generally, a globally filtered flat vector bundle such that the associated graded vector bundle is trivial.
In other words, the flat cohomology bundle~$H^\bullet(E/B;F)\to B$
is then given by a unipotent representation of~$\pi_1B$.

\begin{Remark}\label{Rem4.4a}
  The map~$\xi(E/B;F)$ of~\eqref{eq4.4} is a higher torsion invariant
  in its own right. In fact, most of the properties of~$\tau(E/B;F)$
  in the next subsection  already hold at the level of~$\xi(E/B;F)$.
  Moreover, $\xi(E/B;F)$ is well-defined even if~$F$ carries no parallel
  metric.
  However, the cohomology class~$\tau(E/B;F)$ makes it possible to compare
  Igusa-Klein torsion with Bismut-Lott torsion.
\end{Remark}

\subsection{Properties of the Igusa-Klein torsion}\label{sect4.3}
Assume that the fibre bundle~$p \colon E \to B$ arises by gluing two families~$p_{i} \colon E_{i} \to B$ for~$i = 1$, $2$ along their fibrewise boundary~$\partial_{B}E_{1} = \partial_{B}E_{2}$. Then there exist a framed function~$h \colon E \to \R$ such that~$h|_{E_1} \ge 0$ and~$h|_{E_2} \le 0$. Igusa proves in~\cite{Ibuch} that the corresponding classifying map~$\xi(E/B;F) \colon B \to Wh^{h}(R,G)$ ``splits'' in an appropriate sense, at least if~$R$ is a field and the cohomology bundles~$H^{\bullet}(E_{i}/B;F|_{E_i}) \to B$ are unipotent as above.

Let~$DE_{i} := E_{i} \cup E_{i} \to B$ denote the fibrewise double of~$E_{i}$, and let~$F_{i} \to DE_{i}$ denote the flat vector bundle induced by~$F|_{E_i}$. Then the splitting above has the following consequence, in a wording suggested by Bunke.

\begin{Theorem}[Additivity, Igusa~\cite{Iax}]\label{thm4.5}
If~$E = E_{1} \cup E_{2} \to B$ is as above and the bundles~$H^{\bullet}(E_{i}/B;F|_{E_i}) \to B$ are unipotent, then
\begin{equation}\label{eq4.6}
2\tau(E/B;F) = \tau(DE_{1}/B;F_{1}) + \tau(DE_{2}/B;F_{2}).
\end{equation}
\end{Theorem}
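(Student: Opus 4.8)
The plan is to deduce~\eqref{eq4.6} from Igusa's splitting theorem for the classifying map~$\xi(E/B;F)$, applied both to~$E$ and to the two doubles.

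First I would use Theorem~\ref{thm4.2} and the discussion preceding the statement to pick a framed fibrewise Morse function~$h\colon E\to\R$ adapted to the decomposition, with~$h|_{E_1}\ge0$, $h|_{E_2}\le0$, and~$h^{-1}(0)=\partial_BE_1=\partial_BE_2$ a regular fibrewise level set containing no fibrewise critical point. Then the fibrewise critical set splits as~$C_1\dotcup C_2$ with~$C_1\subset\operatorname{int}(E_1)$ (where~$h>0$) and~$C_2\subset\operatorname{int}(E_2)$ (where~$h<0$), and filtering the fibrewise complex of Section~\ref{sect4.2} by the value of~$h$ yields a two-step filtration: the subcomplex is the fibrewise Morse complex of~$h|_{E_2}$, which computes the absolute cohomology~$H^\bullet(M_2;F)$, and the quotient is the fibrewise Morse complex of~$h|_{E_1}$ relative to the boundary, which computes~$H^\bullet(M_1,\partial M_1;F)$. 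Since~$\tau(E/B;F)$ is assumed to be defined, $H^\bullet(E/B;F)$ is unipotent (in particular if~$F$ is fibrewise acyclic); a Mayer--Vietoris argument then forces~$H^\bullet(\partial_BE_i;F)$, and with the hypothesis also the relative cohomologies of~$(E_i,\partial_BE_i)$, to be unipotent, so that all complexes involved are classified by maps into~$Wh^h(R,G)$. Igusa's splitting theorem now identifies~$\xi(E/B;F)$ with the sum, in the $H$-space~$Wh^h(R,G)$, of the classifying maps of the subcomplex and of the quotient, and pulling back the universal class~$\tau$ gives
\[\tau(E/B;F)=\sigma_2+\rho_1\;,\]
where~$\sigma_i$ is the Igusa--Klein torsion of the absolute Morse complex of~$E_i$ and~$\rho_i$ that of the Morse complex of~$E_i$ relative to~$\partial_BE_i$.

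Next I would run the same argument on~$DE_i=E_i\cup_\partial E_i$. A framed fibrewise Morse function on~$DE_i$ exists and is unique up to homotopy (Theorem~\ref{thm4.2}), so~$\tau(DE_i/B;F_i)$ may be computed from the convenient choice that is~$h|_{E_i}$ on one copy and (fibrewise Morse equivalent to)~$-h|_{E_i}$ on the other. Its critical set splits along the two copies, and the induced filtration has quotient the complex of~$(E_i,\partial_BE_i)$ relative to the boundary, exactly as for~$E$, and subcomplex the fibrewise Morse complex of~$-h|_{E_i}$, which is the absolute complex of~$E_i$. Hence
\[\tau(DE_1/B;F_1)=\sigma_1+\rho_1\;,\qquad\tau(DE_2/B;F_2)=\sigma_2+\rho_2\;,\]
and adding these and comparing with the formula for~$\tau(E/B;F)$ reduces~\eqref{eq4.6} to the single identity~$\sigma_i=\rho_i$ for~$i=1,2$: the absolute and the boundary-relative Igusa--Klein torsions of each piece must agree.

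This last identity is the main obstacle; it is a Poincar\'e--Lefschetz duality statement for Igusa--Klein torsion. The unstable manifolds of~$h|_{E_i}$ are the stable manifolds of~$-h|_{E_i}$, so the absolute Morse complex of~$-h|_{E_i}$ is, degreewise and with its differential transposed, the conjugate dual of the boundary-relative Morse complex of~$h|_{E_i}$, the parallel Hermitian metric on~$F$ being used to identify~$F$ with~$\overline{F}{}^{*}$. On classifying spaces this operation is induced by the automorphism~$g\mapsto\bar g$ of~$U(r)$, and since the universal torsion class~$\tau\in H^{4\bullet}(Wh^h(M_r(\C),U(r));\R)$ is real and assembled from the Kamber--Tondeur classes, which satisfy~$\cho(\overline{F})=\cho(F)$, it is invariant under this operation; there is no parity discrepancy because~$\tau$ has no component in degree~$0$. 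Therefore~$\sigma_i=\rho_i$, so in fact~$\tau(DE_i/B;F_i)=2\rho_i$, and~\eqref{eq4.6} follows. The one subsidiary point I would settle first is the verification that all the relative and doubled complexes lie in the category on which~$Wh^h(R,G)$ and~$\tau$ are defined --- this is precisely where the hypothesis that \emph{both}~$H^\bullet(E_1/B;F|_{E_1})$ and~$H^\bullet(E_2/B;F|_{E_2})$ are unipotent, and not just~$H^\bullet(E/B;F)$, is used.
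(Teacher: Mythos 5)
Your overall strategy --- choose a framed function adapted to the decomposition, split the fibrewise filtered complex into the absolute complex of one half and the boundary-relative complex of the other, and read off torsions from Igusa's splitting of the classifying map --- is indeed the mechanism behind the theorem. But your execution has a genuine gap at the step where you reduce everything to the identity $\sigma_i=\rho_i$ (absolute torsion of $E_i$ equals boundary-relative torsion of $E_i$) and justify it by ``Poincar\'e--Lefschetz duality for Igusa--Klein torsion''. That identity is false in general. Dualising the Morse complex does not merely conjugate the coefficient bundle: it reverses the Morse indices $j\mapsto\dim M-j$ and exchanges stable and unstable bundles, $T^u(-h)=T^s(h)$. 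The degree reversal produces a sign $(-1)^{\dim M}$ on the torsion of the underlying filtered complex (your remark that $\tau$ has no degree-$0$ component addresses the wrong parity --- the issue is the fibre dimension, not the cohomological degree of $\tau$), and the exchange of $T^u$ with $T^s$ changes the framing-principle correction $2\,\hat p_*\Jnull(T^uM)\,\rk F$ of Theorem~\ref{thm4.7} by a nontrivial amount. Concretely, take $E_1=D(V)$, the unit disc bundle of an oriented Euclidean bundle $V\to B$ with $\Jnull(V)\ne0$ in positive degrees. The absolute complex comes from the radial Morse function with a single framed index-$0$ critical point, so $\sigma_1=0$; your conclusion $\tau(DE_1/B;\C)=2\sigma_1$ would force the torsion of the double $DE_1=S(V\oplus\R)$ to vanish, contradicting Theorem~\ref{thm4.6}, which gives $2\Jnull(V\oplus\R)=2\Jnull(V)\ne0$. (Equivalently, the relative torsion $\rho_1$ carries the correction $\pm2\Jnull(V)$ from the top-index critical point, whose unstable bundle is $V$, so $\rho_1\ne\sigma_1$.)

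The good news is that no duality statement is needed, and the repair also explains the symmetric ``averaged'' form of~\eqref{eq4.6}. Split $E$ twice: once with $h$ ($h|_{E_1}\ge0$), giving $\tau(E/B;F)=\rho_1^{(h)}+\sigma_2^{(h)}$, and once with the roles of $E_1,E_2$ exchanged (use $-h$), giving $\tau(E/B;F)=\sigma_1^{(-h)}+\rho_2^{(-h)}$. For the doubles, choose the functions that are $h|_{E_1}$ on one copy and $-h|_{E_1}$ on the other (and symmetrically for $E_2$), so that their splittings produce exactly the same four half-torsions: $\tau(DE_1/B;F_1)=\rho_1^{(h)}+\sigma_1^{(-h)}$ and $\tau(DE_2/B;F_2)=\rho_2^{(-h)}+\sigma_2^{(h)}$. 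Adding and regrouping yields~\eqref{eq4.6} with no claim that absolute and relative torsions of a single piece agree, and no appeal to invariance of these half-torsions under change of Morse function. Your subsidiary point about unipotence of the boundary and relative cohomology bundles is fine (unipotent bundles are closed under subobjects, quotients and extensions, so the Mayer--Vietoris and long exact sequences do the job), and this is indeed where the hypothesis on both $H^{\bullet}(E_i/B;F|_{E_i})$ enters.
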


Suppose that~$p \colon E \to B$ is a $(2n-1)$-sphere bundle with structure group~$U(1)^{n} \subset O(2n)$. Then~$E$ is the fibrewise join of~$n$ circle bundles over~$B$.
The Igusa-Klein torsion of circle bundles has been computed explicitly in~\cite{Ibuch}, and Theorem~\ref{thm4.5} gives the Igusa-Klein torsion of~$p$. By the splitting principle for vector bundles and naturality of the Igusa-Klein torsion, one can now compute the higher torsion of all unit sphere bundles in Euclidean vector bundles. We use the same normalisation as for Bismut-Lott torsion.
Let~${}^0\!J$ denote the characteristic class defined in equation~\eqref{eq3.20}.

\begin{Theorem}[Sphere bundles, Igusa~\cite{Ibuch}]\label{thm4.6}
Let~$V \to B$ be an oriented Euclidean vector bundle with unit sphere bundle~$p \colon E \to B$. Then
\begin{equation}\label{eq4.7}
\tau(E/B;F) = 2\,{}^0\!J(V)\;.
\end{equation}
\end{Theorem}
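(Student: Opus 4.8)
The plan is to propagate the known Igusa-Klein torsion of circle bundles along fibrewise joins, using the additivity of Theorem~\ref{thm4.5} and the splitting principle. By the splitting principle there is a map~$\pi\colon\tilde B\to B$ with~$\pi^{*}$ injective on~$H^{\bullet}(\punkt;\R)$ and~$\pi^{*}V\cong L_{1}\oplus\dots\oplus L_{n}$ if~$\rk V=2n$, resp.~$L_{1}\oplus\dots\oplus L_{n}\oplus\underline{\R}$ if~$\rk V=2n+1$, with the~$L_{i}$ oriented~$2$-plane bundles viewed as complex line bundles. Since~$S(\pi^{*}V)=\pi^{*}S(V)$ and both the classifying map~$\xi_{h}(\punkt/\punkt;F)$ and the characteristic class~${}^0\!J$ of~\eqref{eq3.20} are natural under pullback, it suffices to prove~\eqref{eq4.7} for such split~$V$; and because~${}^0\!J$ is additive under direct sums (it is built from the additive Chern character~$\ch$) and~${}^0\!J(\underline{\R})$ sits in degree~$0$, the statement reduces to~$\tau(S(V)/B;F)=\sum_{i}2\,{}^0\!J(L_{i})$.

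Next I would peel off the line summands one at a time. If~$V=L\oplus W$ with~$L$ an oriented~$2$-plane bundle, then~$S(V)=S(L)\ast_{B}S(W)$ and the standard decomposition
\[
  S(L\oplus W)=\bigl(S(L)\times_{B}D(W)\bigr)\cup_{S(L)\times_{B}S(W)}\bigl(D(L)\times_{B}S(W)\bigr)
\]
exhibits~$E=S(V)$ in the shape required by Theorem~\ref{thm4.5}; the cohomology bundles over~$B$ that occur are those of spheres and stay unipotent, so the theorem applies. The fibrewise doubles it produces are the products~$S(L)\times_{B}S(W\oplus\underline{\R})$ and~$S(L\oplus\underline{\R})\times_{B}S(W)$, whose torsion I would evaluate using (and, if not already on record, first establishing) the product formula
\[
  \tau\bigl(X\times_{B}Y/B;F\bigr)=\chi\bigl(\text{fibre of }Y\bigr)\,\tau(X/B;F)+\chi\bigl(\text{fibre of }X\bigr)\,\tau(Y/B;F)
\]
for Igusa-Klein torsion of fibrewise products of sphere bundles, the topological counterpart of the multiplicativity built into the transfer formula of Section~\ref{sect3.1}. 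Since~$\chi(S^{\mathrm{odd}})=0$ and~$\chi(S^{\mathrm{even}})=2$, almost every term drops out and Theorem~\ref{thm4.5} collapses to
\[
  \tau\bigl(S(L\oplus W)/B;F\bigr)=
  \begin{cases}
    \tau\bigl(S(L)/B;F\bigr)+\tau\bigl(S(W)/B;F\bigr), & \rk W\text{ even},\\
    \tau\bigl(S(L\oplus\underline{\R})/B;F\bigr)+\tau\bigl(S(W)/B;F\bigr), & \rk W\text{ odd}.
  \end{cases}
\]
Iterating, the even-rank case is reduced to the circle bundles~$S(L_{i})$ and the odd-rank case to the fibrewise suspensions~$S(L_{i}\oplus\underline{\R})$, the leftover trivial summand~$S(\underline{\R})=B\sqcup B$ contributing the vanishing torsion of a product bundle.

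Finally I would quote from~\cite{Ibuch} the explicit description of~$\xi$ and~$\tau$ for a circle bundle, coming from the fibrewise height function with one critical point of index~$0$ and one of index~$1$; in the Chern normalisation of the universal classes~$\tau_{2k}$ this gives~$\tau(S(L)/B;F)=2\,{}^0\!J(L)$, and the analogous two-critical-point computation for the fibrewise suspension (an oriented~$S^{2}$-bundle) yields~$\tau(S(L\oplus\underline{\R})/B;F)=2\,{}^0\!J(L\oplus\underline{\R})=2\,{}^0\!J(L)$ in positive degrees. Feeding this into the reduction above and using additivity of~${}^0\!J$ gives~$\tau(S(V)/B;F)=\sum_{i}2\,{}^0\!J(L_{i})=2\,{}^0\!J(V)$ for split~$V$, and the splitting principle carries it to arbitrary oriented~$V$.

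The main obstacle is the pair of inputs used at the end: the product formula for the torsion of fibrewise products of sphere bundles, and the explicit computation of the building blocks inside the acyclic Whitehead space~$Wh^{h}(R,G)$. The second is also where the normalisation gets pinned down --- the constants~$c_{k}$ of~\eqref{eq1.7c}, equivalently the classes~$\tau_{2k}$, must be taken in the Chern normalisation for the circle-bundle torsion to come out equal to~$2\,{}^0\!J(L)$, and hence for the clean identity~\eqref{eq4.7} to hold at all, so this is another of the ``few other results'' for which only that normalisation works.
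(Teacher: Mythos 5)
Your strategy is essentially the one the paper itself sketches before Theorem~\ref{thm4.6}: reduce by the splitting principle and naturality to a sum of oriented $2$-plane bundles, decompose the resulting fibrewise join via Theorem~\ref{thm4.5}, and feed in Igusa's explicit circle-bundle computation. Two remarks on the details you add. First, the product formula you invoke for the fibrewise doubles is exactly the transfer relation~\eqref{eq5.16} combined with naturality and $tr_{E/B}^*\circ p^*=\chi(M)\,\mathrm{id}$; it is on record (Igusa~\cite{Iax}, quoted in Section~\ref{sect5.3}), but be aware that it is a substantially bigger input than Theorem~\ref{thm4.5} alone -- the paper's one-line ``Theorem~\ref{thm4.5} gives the Igusa-Klein torsion of~$p$'' glosses over the same point, and in~\cite{Ibuch} the join is in fact handled by a direct analysis of the filtered Morse complex rather than by the later axioms.

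Second, and more seriously, your value for the even-dimensional building block does not follow from the tools you cite. For the fibrewise suspension~$S(L\oplus\underline\R)$ the height function has critical points of index~$0$ and~$2$, with~$T^uM\cong p^*L$ at the maxima and trivial classifying map, so the framing principle as stated in Theorem~\ref{thm4.7} gives
\begin{equation*}
  \tau\bigl(S(L\oplus\underline\R)/B;\C\bigr)
  = -2\,\hat p_*\Jnull(T^uM)
  = -2\,(-1)^2\,\Jnull(L) = -2\,\Jnull(L)\;,
\end{equation*}
the opposite sign to the~$+2\,\Jnull(L)$ you assert; your recursion would then produce~$-2\,\Jnull(V)$ for odd-rank~$V$, i.e.\ for even-dimensional fibres. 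This is precisely the parity regime where the survey itself is guarded: the remark after Theorem~\ref{thm4.6} claims agreement with Corollary~\ref{cor3.7} only for odd-dimensional fibres, and the even/odd dichotomy of Theorem~\ref{thm5.7} likewise singles out the even-dimensional case. So the even-rank (odd-dimensional fibre) half of your argument, which only ever needs circle bundles, is fine and matches the paper; but for odd-rank~$V$ you must either recheck the sign conventions of the framing principle in~\cite{Ibuch}, \cite{Icpx} against the normalisation used here, or accept that~\eqref{eq4.7} acquires a parity-dependent sign -- as written, your suspension computation assumes the answer rather than deriving it.
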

Note that this agrees with the computations of the Bismut-Lott torsion in Corollary~\ref{cor3.7} if the fibres are odd-dimensional.

Assume that~$h \colon E \to \R$ is a generalised fibrewise Morse function for~$p \colon E \to B$ that is not framed.
Because at the birth-death singularities~$C_{\mathrm{bd}}$, the natural extensions of~$T^{u}M$ at the two adjacent components of~$C_{\mathrm M}$ are stably isomorphic,
we have a class~${}^0\!J(T^uM)\in H^\bullet(C;\R)$.
Let~$\hat p=p|_C$ and let~$C^j_{\mathrm M}$ denote the fibrewise Morse critical
points of Morse index~$j$,
then there exists a well-defined push-down map
\begin{equation}\label{eq4.8}
\hat{p}_{*}\alpha = \sum_{j=0}^{\dim M}(-1)^{j}(p|_{C_{\mathrm M}^{j}})_{*}\,(\alpha|_{C_{\mathrm M}^{j}})
\in H^{\bullet}(B)
\end{equation}
for all~$\alpha\in H^\bullet(C)$.
One can compute the Igusa-Klein torsion of~$p \colon E \to B$ using the classifying map~$\xi_{h}(E/B;F)$ even though~$h$ is not framed. 

\begin{Theorem}[Framing principle, Igusa~\cite{Ibuch}, \cite{Icpx}]\label{thm4.7}
In the situation above,
\begin{equation}\label{eq4.9}
\tau(E/B;F) = \xi_{h}(E/B;F)^{*}\tau -
2\,\hat p_*{}^0\!J(T^uM)\, \operatorname{rk}F.
\end{equation}
\end{Theorem}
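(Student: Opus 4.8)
The plan is to isolate the dependence on the framing and reduce to a universal computation supported on the Morse locus~$C_{\mathrm M}$. By Theorem~\ref{thm4.2} any two framed functions are joined by a framed function over~$E\times[0,1]$, so $\tau(E/B;F)=\xi(E/B;F)^{*}\tau$ does not depend on which framed function is used. Hence
\[
  D(h)\;:=\;\xi_{h}(E/B;F)^{*}\tau-\tau(E/B;F)\;\in\;H^{4\bullet}(B;\R)
\]
depends only on how the trivialisations of~$T^{u}M$ entering the definition of~$\xi_{h}$ fail to fit together into a framing in the sense of Definition~\ref{Def4.1} --- that is, on the framing defect of~$h$. First I would show that $D(h)$ is additive over the connected components of~$C_{\mathrm M}$ and local there: by Igusa's splitting argument (the mechanism behind Theorem~\ref{thm4.5}), the classifying map~$\xi_{h}$ decomposes up to homotopy according to the fibrewise handles attached along the leaves of~$C_{\mathrm M}$, and altering the trivialisation along one leaf~$C_{\mathrm M}^{j}$ of Morse index~$j$ changes~$\xi_{h}$ only through a map that factors through the part of~$Wh^{h}(R,G)$ recording stable framing data. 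Concretely, such a reframing changes~$\xi_{h}$ by the composite of the classifying map~$C_{\mathrm M}^{j}\to BO$ of~$T^{u}M|_{C_{\mathrm M}^{j}}$ with a universal map~$\Phi\colon BO\to Wh^{h}(R,G)$, pushed forward to~$B$ with a sign~$(-1)^{j}$ coming from the position of~$C_{\mathrm M}^{j}$ in the filtered fibrewise Morse complex and a multiplicity~$\operatorname{rk}F$ coming from the fibre~$F_{c}\cong\C^{\operatorname{rk}F}$ attached at each critical point.

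The heart of the matter is then to identify the pull-back of~$\tau$: one must show $\Phi^{*}\tau=2\,{}^{0}\!J\in H^{4\bullet}(BO;\R)$, where ${}^{0}\!J$ is the class of~\eqref{eq3.20}. This is exactly the cohomological content of Igusa's construction of~$\tau$ out of the algebraic $K$-theory of~$\Z$: the Borel regulator classes, in Igusa's normalisation, produce precisely the coefficients~$\tfrac12\zeta'(-2k)$ of~\eqref{eq3.20}, and the overall factor~$2$ is the same one appearing in the sphere-bundle formula of Theorem~\ref{thm4.6}. Granting this, summing the leafwise contributions and invoking the definition~\eqref{eq4.8} of~$\hat p_{*}$ gives $D(h)=2\,\hat p_{*}{}^{0}\!J(T^{u}M)\,\operatorname{rk}F$, which is~\eqref{eq4.9}. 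Finally one notes that crossing the fibre with a closed manifold of Euler number~$1$ changes neither side (as already used after Theorem~\ref{thm4.2}), so there is no loss in assuming $\dim M>\dim B$ and that framed functions exist.

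The main obstacle is the universal identity $\Phi^{*}\tau=2\,{}^{0}\!J$ of the second paragraph: tracing the exact rational class --- including the factor~$2$ and the signs forced by the grading of the Morse complex --- through Igusa's definition of~$\tau$ on~$Wh^{h}(R,G)$. This requires a careful description of how the Borel classes sit inside~$H^{\bullet}(Wh^{h}(R,G);\R)$ and of the stabilisation maps relating the Whitehead spaces attached to different framing conventions; the homotopy-theoretic bookkeeping of orientations there, though elementary in principle, is the delicate step.
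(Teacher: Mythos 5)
The paper itself contains no proof of Theorem~\ref{thm4.7}: it is a survey statement quoted from Igusa's book and his memoir on the framing principle, so your proposal can only be judged on its own merits against Igusa's actual argument. Your overall shape --- localise the discrepancy $D(h)=\xi_h(E/B;F)^*\tau-\tau(E/B;F)$ on the fibrewise critical locus, reduce it to a universal class pulled back from $BO$ via the classifying map of $T^uM$, and identify that universal class with $2\,\Jnull$ --- is the right intuition, but there is a genuine gap exactly where the theorem lives. The identity $\Phi^*\tau=2\,\Jnull$ is not something one can read off from the normalisation of the Borel regulator: one must first produce the map $\Phi$ at all, i.e.\ show that changing the tangential data of $T^uM$ changes $\xi_h$ through a map $B\to Wh^h(R,G)$ that factors through $BO$, and then compute the pullback of $\tau$. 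Note that $Wh^h(R,G)$ as described in Section~\ref{sect4.2} classifies filtered acyclic chain complexes up to filtered quasiisomorphism and expansion/collapse; no framing data is recorded there, and indeed $\xi_h$ of Theorem~\ref{thm4.3} is defined for an arbitrary generalised Morse function without any choice of trivialisation, so your framing of $D(h)$ as measuring ``how the trivialisations entering $\xi_h$ fail to fit together'' is not accurate. The entire content of the framing principle is that the bundle $T^uM$, which is invisible in the filtered chain complex, nonetheless controls $D(h)$; establishing this requires the stabilisation and Hatcher-handle/sphere-bundle computations (compare Theorem~\ref{thm4.6} and the relation between the kernel of the $J$-homomorphism and the Whitehead space) that occupy Igusa's memoir. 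Deferring this as ``the delicate step'' leaves the proof essentially unstarted.

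A second, smaller gap: your locality and additivity step ignores the birth-death locus. The leaves of $C_{\mathrm M}$ are not closed over $B$; they merge along $C_{\mathrm{bd}}$, where the two extensions of $T^uM$ differ by the cubical line, and the splitting mechanism behind Theorem~\ref{thm4.5} is stated under hypotheses (unipotent cohomology bundles, $R$ a field) that you would have to arrange. Making the ``sum over leaves with sign $(-1)^j$ and multiplicity $\rk F$'' consistent across birth-death strata --- which is precisely what makes $\hat p_*\Jnull(T^uM)$ well defined, as noted just before the theorem --- is part of the work, not a formality.
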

As an example, suppose that~$p \colon E \to B$ is the fibrewise suspension of the unit sphere bundle in a vector bundle~$V \to B$. Then there exists a fibrewise Morse function with only two fibrewise critical points, and the unstable tangent bundle at the fibrewise maximums is isomorphic to the pullback of~$V$. In this case, Theorems~\ref{thm4.6} and~\ref{thm4.7} give the same Igusa-Klein torsion for~$E \to B$.

\section{Bismut-Lott = Igusa-Klein}\label{sect5}
The higher analytic torsion of Bismut and Lott and the higher Franz-Reidemeister torsion of Igusa and Klein are defined using rather different methods. Nevertheless, it was noticed that both torsions assign special values of the Bloch-Wigner dilogarithm to acyclic flat line bundles over circle bundles over~$S^{2}$~\cite{IKpict}, \cite{BL}. In this section, we describe two approaches to prove that both torsions agree. The first, due to Bismut and the author, is inspired both by the proof of a general Cheeger-M\"uller theorem in~\cite{BZ1}, \cite{BZ2} and by the constructions of Igusa-Klein torsion using Morse theory~\cite{Ibuch}, \cite{Kphd}. The second approach classifies all invariants of smooth fibre bundles satisfying two simple axioms~\cite{Iax}. It is also suitable to compare Igusa-Klein torsion with the Dwyer-Weiss-Williams construction in~\cite{DWW},
see~\cite{BDKW} and Theorem~\ref{BDKWthm} below.
We also give some consequences of the equality of both torsions. 

\subsection{The Witten deformation}\label{sect5.1}
Let~$p \colon E \to B$ be a smooth proper submersion, and let~$F \to E$ be a flat vector bundle. We assume that there exists a fibrewise Morse function~$h \colon E \to \R$ such that the fibrewise gradient field~$\nabla h$ satisfies the Thom-Smale transversality condition on every fibre of~$p$. A nontrivial example is given by the fibrewise suspension of a unit sphere bundle at the end of section~\ref{sect4.3}.

Let~$o(T^{u}M) \to C$ denote the orientation bundle of~$T^{u}M \to C_{\mathrm M}$, which extends naturally to the birth-death singularities. Recall that~$C_{\mathrm M} = \bigcup_{j}C_{\mathrm M}^{j}$, where~$C_{\mathrm M}^{j}$ is the set of fibrewise critical points of Morse index~$j$. We define a finite-dimensional $\Z$-graded vector bundle
\[ V = \bigoplus_{j}V^{j} \longrightarrow B, \]
with
\begin{equation}\label{eq5.1}
V^{j} = \bigl(p|_{C_{\mathrm M}^{j}}\bigr)_{*}(F \otimes o(T^{u}M)).
\end{equation}
This bundle carries a flat connection~$\nabla^{V}$ induced by~$\nabla^{F}$, and a fibrewise Thom-Smale differential~$a$. Then~$a$ is parallel, so by~\cite{BL}, there exists a torsion form
\begin{equation}\label{eq5.2}
T\bigl(\nabla^{V}+a,g^{V}\bigr) \in H^{\bullet}(B;\R)
\end{equation}
as in Theorem~\ref{thm2.1} for all metrics~$g^{V}$ induced by metrics~$g^{F}$ on~$F$.

We choose a horizontal subbundle~$T^{H}E$ and a fibrewise Riemannian metric~$g^{TM}$ as in sections~\ref{sect1.3} and~\ref{sect2.2}. Then there exists a Mathai-Quillen current~$\psi(\nabla^{TM},g^{TM})$ on the total space of~$TM$, such that
\begin{equation}\label{eq5.3}
d\left((\nabla h)^{*}\psi(\nabla^{TM},g^{TM})\right) = e\bigl(TM,\nabla^{TM}\bigr) - \delta_{C},
\end{equation}
where~$\delta_{C}$ denotes the alternating sum of the currents of integration over~$C_{\mathrm M}^{j}$.

Recall that we have defined two metrics~$g_{V}^{H}$ and~$g_{L^{2}}^{H}$ on the flat vector bundle
\begin{equation}\label{eq5.4}
H = H^{\bullet}(M/B;F)\cong H^{\bullet}(V,a) \to B.
\end{equation}

\begin{Theorem}[Bismut and G. \cite{BGmorse}]\label{thm5.1}
Modulo exact forms on~$B$,
\begin{multline}\label{eq5.5}
{\mathcal T}\bigl(T^{H}M,g^{TM},g^{F}\bigr) = T\bigl(\nabla^{V}+a,g^{V}\bigr) + \chosl\bigl(H,g_{L^2}^{H},g_{V}^{H}\bigr) \\
+ \int_{M/B}(\nabla h)^{*}\psi\bigl(\nabla^{TM},g^{TM}\bigr) \cdot \cho\bigl(F,g^{F}\bigr) + \hat{p}_{*}\Jnull(T^{s}M-T^{u}M) \, \rk F.
\end{multline}
\end{Theorem}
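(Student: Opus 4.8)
The proof is the family-level incarnation of the Witten deformation argument by which Bismut and Zhang proved the general Cheeger--Müller theorem in~\cite{BZ1}, \cite{BZ2}. Fix the fibrewise Morse function~$h$ of the statement (with fibrewise Thom--Smale gradient), and for~$T\ge0$ replace~$g^F$ by the Witten-rescaled metric~$g^F_T=e^{-2Th}g^F$, keeping~$T^HE$ and~$g^{TM}$ fixed; write~$\mathcal T_T=\mathcal T(T^HE,g^{TM},g^F_T)$ and let~$g^{H,T}_{L^2}$ be the corresponding $L^2$-metric on~$H$. The plan is to let~$T\to\infty$ and to read off~$\mathcal T_0$ from the asymptotics. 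By the anomaly formula, Theorem~\ref{thm2.6} (applied with fixed~$T^HE$ and~$g^{TM}$, so that the Chern--Simons Euler term drops out), for every~$T$ one has, modulo exact forms on~$B$,
\[
  \mathcal T_0=\mathcal T_T
  -\int_{M/B}e\bigl(TM,\nabla^{TM}\bigr)\,\chosl\bigl(F,g^F,g^F_T\bigr)
  +\chosl\bigl(H,g^H_{L^2},g^{H,T}_{L^2}\bigr)\;;
\]
the three terms on the right each acquire, as~$T\to\infty$, an asymptotic expansion whose unbounded parts cancel in this fixed combination, and the task is to identify the stable parts with the four terms of~\eqref{eq5.5}.

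The soft inputs concern the last two terms. Since~$\omega(F,g^F_T)=\omega(F,g^F)-2T\,dh$ and~$dh\wedge dh=0$, the transgression~$\chosl(F,g^F,g^F_T)$ is a polynomial of degree at most~$1$ in~$T$ whose coefficient structure, combined with~$e(TM,\nabla^{TM})$ and the defining property~\eqref{eq5.3} of the Mathai--Quillen current~$\psi$, shows that the~$T$-independent part of~$\int_{M/B}e(TM,\nabla^{TM})\,\chosl(F,g^F,g^F_T)$ equals~$-\int_{M/B}(\nabla h)^{*}\psi(\nabla^{TM},g^{TM})\,\cho(F,g^F)$. For the cohomology term one invokes the family version of the Helffer--Sjöstrand / Bismut--Zhang asymptotics of Witten-deformed harmonic forms: under the Thom--Smale quasi-isomorphism the harmonic representatives concentrate near the fibrewise critical locus~$C$, so that the~$T$-independent part of~$\chosl(H,g^H_{L^2},g^{H,T}_{L^2})$ is~$\chosl(H,g^H_{L^2},g^H_V)$, where~$g^H_V$ is the Milnor-type metric on~$H\cong H^\bullet(V,a)$. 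The degree-$0$ part of this bookkeeping is exactly the scalar Cheeger--Müller anomaly of~\cite{BZ1}.

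The substantial point is the~$T$-independent part of the analytic torsion form~$\mathcal T_T$ itself, which should be
\[
  T\bigl(\nabla^V+a,g^V\bigr)+\hat p_{*}\,\Jnull\bigl(T^sM-T^uM\bigr)\,\rk F\;.
\]
To obtain this one establishes, uniformly over~$B$, a spectral gap for the fibrewise Witten Laplacian~$-\tilde{\X}^2$ (with metric~$g^F_T$), separating a cluster of~$O(e^{-cT})$ eigenvalues --- realising~$H^\bullet(V,a)$ in the limit by the Thom--Smale transversality hypothesis --- from a gap above~$cT$. Correspondingly the superconnection heat operator~$\tilde{\X}e^{\tilde{\X}^2}$ splits into a small-eigenvalue part and a large-eigenvalue part. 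The small-eigenvalue part converges, as~$T\to\infty$, to the finite-dimensional superconnection heat operator attached to the parallel family~$(V,\nabla^V+a,g^V)$, so after the~$\int_0^\infty\frac{dt}{t}$ integration it contributes~$T(\nabla^V+a,g^V)$ as in Theorem~\ref{thm2.1}; the large-eigenvalue part localises in a tubular neighbourhood of~$C$ and, after a Getzler-type rescaling, is governed by a model harmonic-oscillator superconnection on the normal directions of~$C$, whose torsion is computed by exactly the method used for the sphere-bundle / symmetric-space computation of Corollary~\ref{cor3.7} and produces~$\hat p_{*}\Jnull(T^sM-T^uM)\,\rk F$.

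The genuine obstacle is uniformity: one must control the heat-kernel supertrace, as a form on~$B$, simultaneously for~$t$ small (Getzler rescaling near~$C$), for~$t$ of order one (resolvent estimates exploiting the gap together with Agmon-type exponential localisation away from~$C$), and for~$t\to\infty$ (decay onto the small-eigenvalue subspace), with all constants independent of~$T$ and locally uniform on~$B$, so that the~$T\to\infty$ asymptotics may be interchanged with~$\int_0^\infty\frac{dt}{t}$. This is the family analogue of the hardest estimates in~\cite{BZ1}, \cite{BZ2} and is where essentially all the work goes. Substituting the three stable parts into the displayed anomaly identity then gives~\eqref{eq5.5}.
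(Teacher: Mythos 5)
Your overall strategy is indeed the one behind the theorem as proved in~\cite{BGmorse}: Witten deformation of the fibrewise de Rham complex (equivalently $g^F_T=e^{-2Th}g^F$), the anomaly formula of Theorem~\ref{thm2.6}, a uniform spectral gap isolating the small eigenvalues whose limit is the Thom--Smale family $(V,\nabla^V+a)$ and hence produces $T(\nabla^V+a,g^V)$, and a harmonic-oscillator model near~$C$ responsible for the class~$\Jnull(T^sM-T^uM)$. But one of your ``soft inputs'' is wrong, and it is not cosmetic. Since $dh$ acts as a central one-form and $dh\wedge dh=0$, one has $\omega(F,g^F_T)=\omega(F,g^F)-2T\,dh$ with $\omega(F,g^F_T)^2=\omega(F,g^F)^2$, so by~\eqref{eq1.7c} the transgression along the path $e^{-2sTh}g^F$ is
\[
  \chosl\bigl(F,g^F,g^F_T\bigr)
  =-2T\sum_{k}\tilde c_k\,\tr_F\bigl(h\,\omega(F,g^F)^{2k}\bigr)
\]
for suitable constants $\tilde c_k$; it is homogeneous of degree one in~$T$ and has \emph{no} $T$-independent part. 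Hence $\int_{M/B}e(TM,\nabla^{TM})\,\chosl(F,g^F,g^F_T)$ contributes nothing stable, and the Mathai--Quillen term of~\eqref{eq5.5} cannot be read off from it; nor can~\eqref{eq5.3} be applied to this term as you describe, because nothing in it compares a global $e$-paired fibre integral with a $\delta_C$-paired sum over the critical locus.

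The current term actually emerges from the $T\to\infty$ asymptotics of $\mathcal T_T$ itself (in the contour argument in the $(t,T)$-plane used in~\cite{BZ1}, \cite{BZ2} and~\cite{BGmorse}): in the intermediate regime the local-index/Berezin-integral contributions localise along $T\nabla h$, and the rescaled pullbacks $(s\nabla h)^*$ of the Mathai--Quillen form, integrated over $s$, reconstruct $(\nabla h)^*\psi(\nabla^{TM},g^{TM})$ paired with $\cho(F,g^F)$. So your claimed stable part of $\mathcal T_T$, namely $T(\nabla^V+a,g^V)+\hat p_*\Jnull(T^sM-T^uM)\,\rk F$, is incomplete: it must also carry the Mathai--Quillen term, together with the divergent counterterms (linear in $T$ and in $\log T$, built from the critical values and the index data) that cancel against those of $\chosl\bigl(H,g^H_{L^2},g^{H,T}_{L^2}\bigr)$ --- note that $g^{H,T}_{L^2}$ does not converge to $g^H_V$ but degenerates with factors of the type $e^{-2Th(x)}(T/\pi)^{\pm\ast}$, and only after removing these does its transgression stabilise to $\chosl\bigl(H,g^H_{L^2},g^H_V\bigr)$. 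With the bookkeeping reassigned in this way, and with the uniform $(t,T)$-estimates you correctly identify as the hard analytic core, your outline becomes the proof of~\cite{BGmorse}.
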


This theorem is proved using the Witten deformation of the fibrewise de Rham complex by~$h$ as in~\cite{BZ1}, \cite{BZ2}. By Theorems~\ref{thm1.3} and~\ref{thm2.1} and~\eqref{eq1.7} and~\eqref{eq5.3}, taking the exterior derivative in Theorem~\ref{thm5.1} gives a trivial identity. The first three terms on the right hand side can be guessed that way. On the other hand, the last term contains topological information related to Igusa's framing principle.

In fact, if~$F$ carries a parallel metric, then~$T(\nabla^{V}+a,g^{V})^{[\ge 2]} = 0$ by the axiomatic description of~$T$ in~\cite{BL}, and the metric~$g_{V}^{H}$ is parallel, too. Recall the Becker-Gottlieb transfer~$tr_{E/B}^* \colon H^{\bullet}(E) \to H^{\bullet}(B)$ of~\eqref{eq1.11}. In this case, Theorem~\ref{thm5.1} reduces to
\begin{equation}\label{eq5.6}
\begin{aligned}
{\mathcal T}(E/B;F) &= \hat{p}_{*}\Jnull(T^{s}M - T^{u}M) \,\rk F \\
&= \hat{p}_{*}\Jnull(TM|_{C})\,\rk F - 2\hat{p}_{*}\Jnull(T^{u}M)\,\rk F \\
&= \tau(E/B;F) + tr_{E/B}^* \Jnull(TM)\,\rk F,
\end{aligned}
\end{equation}
where we have used a families version of the Poincar\'{e}-Hopf theorem, the framing principle of Theorem~\ref{thm4.7}, and the triviality of the classifying map~$\xi_{h}(E/B;F) \colon\penalty0 B \to Wh^{h}(R,G)$. This already explains the similarity of Corollary~\ref{cor3.7} and Theorem~\ref{thm4.6} for suspended unit sphere bundles.

\subsection{Analytic Igusa-Klein torsion}\label{sect5.2}
Let us assume again that~$h \colon E \to \R$ is a fibrewise Morse function. We still consider the~$\Z$-graded flat vector bundle~$V \to B$ of~\ref{sect5.1} with connection~$\nabla^{V}$ and metric~$g^{V}$ induced from~$F$. The function~$h$ acts by multiplication on~$F|_{C}$, giving rise to a selfadjoint endomorphism~$h$ of~$V$. An endomorphism of~$V$ is called $h${\em-upper triangular} if it maps each~$\lambda$-eigenvector of~$h$ to the sum of the $\mu$-eigenspaces with~$\mu > \lambda$.

For a generic fibrewise Riemannian metric~$g^{TM}$, the fibrewise gradient~$\nabla h$ will satisfy the Smale transversality condition over an open dense subset of~$B$. Over this subset, the Thom-Smale cochain differential is a parallel, $h$-upper triangular endomorphism~$a$ of~$V$. The various differentials~$a$ over different points along a path in~$B$ are conjugated by endomorphisms of~$V$ of the type~$\id + b$, where~$b$ is again $h$-upper triangular. As one moves around in a small circle on~$B$, these endomorphisms compose to an automorphism of~$(V,a)$ that is homotopic to the identity by an $h$-upper triangular homotopy. These various homotopies are again related by $h$-upper triangular higher homotopies, and so on. If the cohomology bundle~$H$ is unipotent, then all these structures are encoded in Igusa's map~$\xi_{h}(E/B;F) \colon B \to Wh^{h}(R,G)$ of Theorem~\ref{thm4.3}. 

One may also consider these algebraic structures as a singular superconnection on~$V$. If~$R = M_{r}(\R)$ or~$R = M_{r}(\C)$, there exists a smooth flat superconnection
\begin{equation}\label{eq5.7}
A' = \nabla^{V} + a_{0} + a_{1} + \dots 
\end{equation}
of total degree one with $h$-upper triangular
\begin{equation}\label{eq5.8}
a_{j} \in \Omega^{j}\bigl(B;\operatorname{End}^{1-j}(V)\bigr)
= \Omega^{j}\Bigl(B;\bigoplus_{k}\operatorname{Hom}\bigl(V^{k},V^{k+1-j}\bigr)\Bigr)
\end{equation}
and an $\Omega^\bullet(B)$-linear quasiisomorphism
\begin{equation}\label{eq5.9}
I\colon\bigl(\Omega^{\bullet}(E;F),\nabla^{F}\bigr) \to (\Omega^{\bullet}(B;V),A')
\end{equation}
by~\cite{G1}. The map~$I$ arises as a modification of the classical ``integration over the unstable cells'', and it maps forms supported on~$h^{-1}(\lambda,\infty)$ to the sum of the $\mu$-eigenspaces of~$h \in \operatorname{End} V$ with~$\mu \ge \lambda$. Moreover, the pair~$(A',I)$ is uniquely determined up to contractible choice by~$h$ and~$g^{TM}$. It is shown in~\cite{G2} that for acyclic~$F$, Igusa's map~$\xi_{h}(E/B;F)\colon B\to Wh^h(R,G)$ also classifies~$(A',I)$ up to a natural notion of homotopy.

The finite-dimensional torsion form of Definition~\ref{Def2.2} is only well-defined for flat superconnections of the form~$\nabla^{V}+a_{0}$. In~\cite{G1}, \cite{G2} a torsion form~$T(A',\nabla^{V},g^{V})$ is constructed using the fact that~$A'-\nabla^{V}$ is a form on~$B$ with values in a nilpotent subalgebra of End~$V$, which may vary over~$B$. We can still construct a metric~$g_{V}^{H}$ on
\begin{equation}\label{eq5.10}
H = H^{\bullet}(E/B;F) = H^{\bullet}(V,a_{0}) \to B 
\end{equation}
as in section~\ref{sect2.1}. Then we still have
\begin{equation}\label{eq5.11}
dT\bigl(A',\nabla^{V},g^{V}) = \cho\bigl(V,g^{V}) - \cho\bigl(H,g_{V}^{H}).
\end{equation}

\begin{Theorem}[\cite{G1}, \cite{G2}]\label{thm5.2}
Modulo exact forms on~$B$,
\begin{multline}\label{eq5.12}
{\mathcal T}\bigl(T^{H}E,g^{TM},g^{F}\bigr) = T\bigl(A',\nabla^{V},g^{V}\bigr) + \cho\bigl(H,g_{L^2}^{H},g_{V}^{H}\bigr) \\
+ \int_{E/B}(\nabla h)^{*}\psi\bigl(\nabla^{TM},g^{TM}\bigr) \cho\bigl(F,g^{F}\bigr) + \hat{p}_{*}\Jnull(T^{u}M-T^{s}M) \,\rk F.
\end{multline}
\end{Theorem}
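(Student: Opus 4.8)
The plan is to adapt the Witten-deformation proof of Theorem~\ref{thm5.1} given by Bismut and the author in~\cite{BGmorse}, which transports the Bismut--Zhang comparison of Ray--Singer and Milnor torsion~\cite{BZ1}, \cite{BZ2} to the family of fibrewise de Rham complexes attached to~$p\colon E\to B$. The genuinely new input, from~\cite{G1}, \cite{G2}, is that on the locus of~$B$ where~$\nabla h$ fails to be Thom--Smale transverse one cannot use an honest Morse complex~$(V,\nabla^{V}+a)$; one works instead with the smooth flat superconnection~$A'=\nabla^{V}+a_{0}+a_{1}+\cdots$ of~\eqref{eq5.7}, the $\Omega^{\bullet}(B)$-linear quasi-isomorphism~$I$ of~\eqref{eq5.9}, and the generalised torsion form~$T(A',\nabla^{V},g^{V})$ obeying~\eqref{eq5.11}.

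First I would dispose of the formal bookkeeping. Combining~\eqref{eq5.11}, the transgression property~\eqref{eq1.7}, Theorem~\ref{thm1.3}, the Mathai--Quillen equation~\eqref{eq5.3}, and the fact that~$\Jnull$ of the stably defined virtual bundle~$T^{u}M-T^{s}M$ is a closed form, one checks that both sides of~\eqref{eq5.12} have the same exterior derivative on~$B$ --- here one uses that the~$C^{j}_{\mathrm M}$ cover~$B$, so that pushforward along~$\delta_{C}$ turns~$\cho(F,g^{F})$ into~$\cho(V,g^{V})$. Hence the difference of the two sides is closed, and ``modulo exact forms'' is equivalent to the vanishing of its de Rham class. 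This pins down the shape of the four terms, but not the identity.

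To get the identity I would run the fibrewise Witten deformation, replacing~$g^{F}$ by~$g^{F}_{T}=e^{-2Th}g^{F}$ and letting~$T\to\infty$. Theorem~\ref{thm2.6} relates~$\mathcal T(T^{H}E,g^{TM},g^{F}_{T})$ to~$\mathcal T(T^{H}E,g^{TM},g^{F})$ through~$\chosl(F,g^{F},g^{F}_{T})$, the analogous transgression for the $L^{2}$-metrics on~$H$, and~$\int_{E/B}e(TM,\nabla^{TM})\,\chosl(F,g^{F},g^{F}_{T})$; these grow at most linearly in~$T$ with explicit leading parts. One then analyses the~$T\to\infty$ asymptotics of~$\mathcal T(T^{H}E,g^{TM},g^{F}_{T})$ by the Bismut--Zhang localisation technique, carried out uniformly in the base parameter and compatibly with~$I$: the small eigenvalues of the fibrewise Witten Laplacian concentrate near~$C$, and their superconnection heat operator converges, on the level of torsion forms and modulo exact forms, to the finite-dimensional object~$(\Omega^{\bullet}(B;V),A')$ with the metric induced by~$g^{F}_{T}$. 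The Mathai--Quillen current enters through~\eqref{eq5.3}, since~$\int_{E/B}(\nabla h)^{*}\psi(\nabla^{TM},g^{TM})$ interpolates between~$e(TM,\nabla^{TM})$ and~$\delta_{C}$ and so produces the term~$\int_{E/B}(\nabla h)^{*}\psi(\nabla^{TM},g^{TM})\,\cho(F,g^{F})$. The remaining discrepancy is concentrated near each~$C^{j}_{\mathrm M}$ and is evaluated by a local index computation for the model problem there; this is the computation underlying K\"ohler's equivariant torsion of spheres~\cite{Ksym} (compare Corollary~\ref{cor3.7} and Theorem~\ref{thm4.6}), and it contributes~$\hat p_{*}\Jnull(T^{u}M-T^{s}M)\,\rk F$. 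As~$T\to\infty$ the divergent contributions cancel, the transgression terms for~$F$ cancel, those for~$H$ combine into the metric-comparison term for~$H$ in~\eqref{eq5.12}, and~\eqref{eq5.12} remains.

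I expect the main obstacle to be exactly this uniform large-$T$ analysis over all of~$B$, in particular over the strata where Thom--Smale transversality degenerates. Fibrewise one still has a uniform spectral gap separating the small eigenvalues of the Witten Laplacian from the rest, but over those strata the small-eigenvalue subbundle is no longer the Thom--Smale complex; it must be matched with~$(V,A')$, and one must check that the resulting parametrised Morse complex is the one classified by~$\xi_{h}(E/B;F)$ in Theorem~\ref{thm4.3}. Making the heat-kernel estimates uniform in the base, and extracting the precise local term~$\hat p_{*}\Jnull(T^{u}M-T^{s}M)\,\rk F$ rather than only its cohomology class, is the technical heart carried out in~\cite{G1}, \cite{G2}.
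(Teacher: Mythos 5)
Your plan is essentially the route the paper points to: the survey gives no argument for Theorem~\ref{thm5.2} beyond citing \cite{G1}, \cite{G2} and noting (for Theorem~\ref{thm5.1}) that the proof is the fibrewise Witten deformation as in \cite{BZ1}, \cite{BZ2}, and those references proceed exactly as you describe --- deforming by $e^{-2Th}g^{F}$, localising the small-eigenvalue part of the superconnection onto $(\Omega^{\bullet}(B;V),A')$ via the map $I$ of~\eqref{eq5.9}, using the Mathai--Quillen transgression~\eqref{eq5.3}, and evaluating the remaining local contribution near $C$, with the identification of the limiting complex over the non-transversal strata as the technical heart. The only caveat is attributive, not mathematical: the local ${}^0\!J$-term arises from the torsion of the bundle of harmonic-oscillator models at the fibrewise critical points (whence the values $\zeta'(-2k)$), rather than literally from K\"ohler's equivariant computation \cite{Ksym}, although the two are related in the way the survey indicates when comparing Corollary~\ref{cor3.7} and Theorem~\ref{thm4.6}.
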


If both~$F$ and~$H$ carry parallel metrics, we can construct a cohomology class as in Definition~\ref{Def2.8}. Let~$g^{V}$ be the induced parallel metric on~$V$.

\begin{Definition}\label{Def5.3} The analytic Igusa-Klein torsion is defined as
\begin{equation}\label{eq5.13}
T(E/B;F) = T\bigl(A',\nabla^{V},g^{V}\bigr)^{[\ge 2]} + \chosl\bigl(H,g^{H},g^{H}_{V}\bigr) \in H^{\mathrm{even},\ge 2}(B;\R).
\end{equation}
\end{Definition}

To justify the name, assume that~$g^{F}$ is parallel and the bundle~$H \to B$ is a trivial flat bundle. Then both~$\tau(E/B;F)$ and~$T(E/B;F)$ are defined.

\begin{Theorem}[\cite{G2}]\label{thm5.4}Under these assumptions,
\begin{equation}\label{eq5.14}
T(E/B;F) = \xi_{h}(E/B;F)^{*}\,\tau \in H^{4\bullet,\ge 4}(B;\R).
\end{equation}
\end{Theorem}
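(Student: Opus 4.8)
The plan is to realise both sides of~\eqref{eq5.14} as the pull-back under $\xi_h(E/B;F)$ of one and the same universal class on the Whitehead space, and then to identify that universal class with Igusa's $\tau$ by a normalisation argument. Throughout, $g^F$ is parallel and $H=H^{\bullet}(E/B;F)\to B$ is trivial, so there is a parallel metric $g^H$ on $H$ and $\xi_h(E/B;F)$ is defined as in the remark after Definition~\ref{Def4.4}; let $g^V$ be the parallel metric on $V$ induced by $g^F$.

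\emph{Step 1: the left-hand side is a pull-back.} By Definition~\ref{Def5.3}, $T(E/B;F)=T(A',\nabla^V,g^V)^{[\ge 2]}+\chosl(H,g^H,g_V^H)$. Using~\eqref{eq5.11}, the transgression identity~\eqref{eq1.7}, and the construction of $T(A',\nabla^V,g^V)$ in~\cite{G1,G2}, one first checks that $T(E/B;F)$ is closed and that its de Rham class depends only on the homotopy class of the flat superconnection $(A',I)$ of~\eqref{eq5.7}--\eqref{eq5.9}, the choices of $g^{TM}$, $g^F$ and of $(A',I)$ itself contributing only exact forms --- exactly as for $\mathcal{T}(E/B;F)$ in Definition~\ref{Def2.8}. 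Since $\xi_h(E/B;F)\colon B\to Wh^h(R,G)$ classifies $(A',I)$, by (the unipotent-cohomology version of) the result of~\cite{G2} recalled in Section~\ref{sect5.2}, the higher torsion form of the tautological flat superconnection on $Wh^h(R,G)$, equipped with its canonical combinatorially-trivialised (here even acyclic) cohomology, defines a universal class $\mathbf T\in H^{\mathrm{even},\ge 2}\bigl(Wh^h(R,G);\R\bigr)$ with $T(E/B;F)=\xi_h(E/B;F)^{*}\mathbf T$ for every $(p,F,h)$ as above.

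\emph{Step 2: $\mathbf T=\tau$.} Both $\mathbf T$ and $\tau$ are natural under compatible ring/group homomorphisms $(R,G)\to(S,H)$, so it suffices to treat $R=M_r(\C)$, $G=U(r)$. There the tautological superconnection carries a parallel metric, so $\cho(V,g^V)=0=\cho(H)$ and $\mathbf T$ is genuinely closed by~\eqref{eq5.11}; a degree/symmetry argument, as for the class $\Jnull$ built from $\ch^{[4k]}$ in~\eqref{eq3.20}, places $\mathbf T$ in $H^{4\bullet,\ge 4}$, matching $\tau=\sum_{k\ge1}\tau_{2k}$ with $\tau_{2k}\in H^{4k}$. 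It remains to pin down the constants: $\tau$ is built from the relative Borel regulator on $Wh^h(M_r(\C),U(r))$, and $\mathbf T$ from the analytic torsion form, and both are double transgressions of the Kamber--Tondeur classes of Section~\ref{sect1.1}. By Igusa's computations, $H^{4\bullet}\bigl(Wh^h(M_r(\C),U(r));\R\bigr)$ is in each degree detected on a family of standard examples; in the lowest degree these are acyclic flat line bundles over circle bundles over $S^{2}$, on which $\mathbf T$ and $\tau$ take the same special value of the Bloch--Wigner dilogarithm (the computations of~\cite{IKpict} and~\cite{BL} recalled at the start of this section), and for this agreement one must use the Chern normalisation~\eqref{eq1.5} --- precisely the normalisation singled out in Remark~\ref{rem3.6a}. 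Carrying out the comparison in all degrees gives $\mathbf T=\tau$ in $H^{4\bullet,\ge 4}$.

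\emph{Step 3: conclusion.} Steps 1 and 2 yield $T(E/B;F)=\xi_h(E/B;F)^{*}\mathbf T=\xi_h(E/B;F)^{*}\tau$ in $H^{4\bullet,\ge 4}(B;\R)$, which is~\eqref{eq5.14}; in particular $T(E/B;F)$, a priori only in $H^{\mathrm{even},\ge 2}$, lies in $H^{4\bullet,\ge 4}$. One can cross-check this identity against Theorem~\ref{thm5.2} (where $\cho(F,g^F)=0$ and the Mathai--Quillen term drops out), Definitions~\ref{Def2.8} and~\ref{Def5.3}, the cocycle property of $\chosl$, and the families Poincar\'e--Hopf identity $\hat p_*(\alpha|_C)=tr_{E/B}^{*}\alpha$ (for closed $\alpha$ on $E$), which together relate $T(E/B;F)$, $\mathcal{T}(E/B;F)$ and $\xi_h(E/B;F)^{*}\tau$ in a way consistent with the framing principle, Theorem~\ref{thm4.7}. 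The main obstacle is Step 2: matching the topologically defined universal torsion $\tau$ on the Whitehead space, built from Borel regulators, with the analytically defined universal torsion form $\mathbf T$, including fixing every constant $c_k$ of~\eqref{eq1.7c}. This reduces to a finite computation in each degree on Igusa's generating families, but performing it requires the explicit form of both constructions on those families together with the (poly)logarithm identities that make them agree in low degrees.
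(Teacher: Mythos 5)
The survey states Theorem~\ref{thm5.4} as a citation from~\cite{G2} and contains no proof of it, so your proposal can only be judged on its own merits and against the expected argument of~\cite{G2}. Your overall shape --- use that $\xi_h(E/B;F)$ classifies the pair~$(A',I)$, so the analytically defined class is pulled back from a universal class~$\mathbf T$ on~$Wh^h(R,G)$, then identify~$\mathbf T$ with Igusa's~$\tau$ --- is the natural one. But Step~2, which you yourself identify as the crux, contains a genuine gap rather than a deferred computation. The claim that~$H^{4k}\bigl(Wh^h(M_r(\C),U(r));\R\bigr)$ is ``detected'' by standard families such as acyclic flat line bundles over circle bundles over~$S^2$ is unjustified and cannot be expected to hold: the real cohomology of the acyclic Whitehead space is governed by the algebraic $K$-theory of~$\C$, and~$K_{2k+1}(\C)\otimes\Q$ is an uncountable-dimensional vector space, so evaluating classes on any manageable family of bundles over spheres cannot separate arbitrary elements of~$H^{4k}$. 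What has to be shown is that the particular difference~$\mathbf T-\tau$ vanishes, and that requires a structural argument --- e.g.\ exhibiting both classes as one and the same (simplicial) transgression of the Kamber--Tondeur/Borel classes over the simplices of the Whitehead space, with the Chern normalisation of~\eqref{eq1.5} fixing the constants~$c_k$ --- not detection by examples. The dilogarithm coincidence on circle bundles over~$S^2$ can at best calibrate the degree-$4$ constant, and even that only after one already knows the two classes are proportional.

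Step~1 is also more than a ``check'': defining a torsion form of a tautological flat superconnection over the simplicial, infinite-dimensional space~$Wh^h(R,G)$, showing it gives a well-defined closed universal class independent of all auxiliary choices, showing the degree-$(4k+2)$ parts vanish for unitary holonomy, and showing its pullback under the classifying map reproduces~$T(A',\nabla^V,g^V)$ of Definition~\ref{Def5.3} (including the correction term~$\chosl(H,g^H,g^H_V)$) is essentially the content of~\cite{G2}; as written you assert it. So your proposal is an outline of the right overall shape, but both key steps are unproved, and the detection mechanism you propose for the decisive identification~$\mathbf T=\tau$ would fail as stated.
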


In contrast to the situation in the previous section~\ref{sect5.1}, these cohomology classes will be nontrivial in general. Also note that~$T(A',\nabla^{V},g^{V})$ can still be constructed for generalised fibrewise Morse functions~$h$ as in Definition~\ref{Def4.1}. In this context, Theorem~\ref{thm5.4} still holds.
A generalisation of Theorem~\ref{thm5.2} will be proved in~\cite{G3}.
As in~\eqref{eq5.6}, one can now compare Bismut-Lott torsion and Igusa-Klein torsion.

\begin{Theorem}[\cite{G2}, \cite{G3}] \label{thm5.5}
If~$F$ carries a parallel metric and~$H \to B$ is a trivial flat bundle, then
\begin{equation}\label{eq5.15}
{\mathcal T}(E/B;F) = \tau(E/B;F) + tr_{E/B}^* \Jnull(TM)\,\rk F.
\end{equation}
\end{Theorem}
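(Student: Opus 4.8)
The plan is to imitate the computation in~\eqref{eq5.6}, feeding in the full Morse-theoretic identity of Theorem~\ref{thm5.2} — in the version valid for generalised fibrewise Morse functions announced in~\cite{G3} — in place of the special case used there. Under the hypotheses of the theorem all the invariants involved are defined: $g^{F}$ is parallel by assumption, $H\to B$ carries a flat metric since it is trivial, and these give $\mathcal T(E/B;F)$ via Definition~\ref{Def2.8}, $\tau(E/B;F)$ via Definition~\ref{Def4.4}, and $T(E/B;F)$ via Definition~\ref{Def5.3}. First I would fix a generalised fibrewise Morse function $h\colon E\to\R$ on $p$. Such an $h$ exists by Theorem~\ref{thm4.2} as soon as $\dim M\ge\dim B$, and one reduces to that case by replacing $p$ by its product with the projection from a factor $\R P^{2n}$, both sides of~\eqref{eq5.15} being unchanged by this operation since $\chi(\R P^{2n})=1$, the Becker--Gottlieb transfer is multiplicative, and both higher torsions obey product formulae with a fixed closed manifold. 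Starting from Definition~\ref{Def2.8} I would then substitute for $\mathcal T\bigl(T^{H}E,g^{TM},g^{F}\bigr)$ the right hand side of~\eqref{eq5.12} and restrict attention to positive degrees.

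Two simplifications then occur. Since $g^{F}$ is parallel, $\cho(F,g^{F})=0$ by~\eqref{eq1.6}, so the Mathai--Quillen current term in~\eqref{eq5.12} disappears entirely. The two transgression classes of the cohomology bundle, $\chosl\bigl(H,g_{L^{2}}^{H},g_{V}^{H}\bigr)$ from~\eqref{eq5.12} and $\chosl\bigl(H,g^{H},g_{L^{2}}^{H}\bigr)$ from Definition~\ref{Def2.8}, add up modulo exact forms to $\chosl\bigl(H,g^{H},g_{V}^{H}\bigr)$ by the cocycle relation for $\chosl$; and this last class vanishes in positive degrees because $g^{H}$ and $g_{V}^{H}$ are both $\nabla^{H}$-parallel ($g_{V}^{H}$ because it is the restriction to $H$ of the parallel metric $g^{V}$, and the Hodge splitting is parallel), by the argument of~\eqref{eq2.31}. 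Hence, in positive degrees, $\mathcal T(E/B;F)$ equals $T\bigl(A',\nabla^{V},g^{V}\bigr)^{[\ge2]}$ plus the Euler-form correction term of~\eqref{eq5.12}. By Definition~\ref{Def5.3} — its own $\chosl$-term again being of degree $0$ — the first summand is $T(E/B;F)$, which by Theorem~\ref{thm5.4} equals $\xi_{h}(E/B;F)^{*}\tau$.

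It then remains to turn the Euler-form correction term into $tr_{E/B}^{*}\Jnull(TM)\,\rk F$. For this I would apply the framing principle of Theorem~\ref{thm4.7} to rewrite $\xi_{h}(E/B;F)^{*}\tau$ in terms of $\tau(E/B;F)$ and a push-down of $\Jnull(T^{u}M)$ (the constant in degree $0$ being irrelevant, as $\tau$ lives in degrees $4\bullet$ by~\eqref{eq4.5}); the $T^{u}M$- and $T^{s}M$-contributions then combine, using the additivity of $\Jnull$ together with $T^{u}M\oplus T^{s}M=TM|_{C}$, into $\hat p_{*}\bigl(\Jnull(TM)|_{C}\bigr)\,\rk F$; and the families Poincar\'e--Hopf theorem, which expresses $\hat p_{*}(\alpha|_{C})$ as $tr_{E/B}^{*}\alpha$, identifies this with $tr_{E/B}^{*}\Jnull(TM)\,\rk F$, exactly as in the last line of~\eqref{eq5.6}. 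This yields~\eqref{eq5.15}.

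The genuine content lies in Theorem~\ref{thm5.2} and its generalisation in~\cite{G3}: constructing the flat superconnection $(A',I)$ from $h$ and $g^{TM}$, identifying its torsion form with a Bismut--Lott torsion form up to the explicit local terms, and evaluating the Euler-form correction. In the deduction above, the delicate points are (i) reconciling the various orientation and normalisation conventions so that the signs of the $T^{u}M/T^{s}M$ term in~\eqref{eq5.12}, of the Mathai--Quillen current, and of the correction in the framing principle~\eqref{eq4.9} fit together, along with the bookkeeping of the discarded degree-$0$ parts on both sides; and (ii) the analytic half of the $\R P^{2n}$-stabilisation, which requires a multiplicativity statement for Bismut--Lott torsion under products with a fixed closed manifold.
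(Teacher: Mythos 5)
Your argument is essentially the paper's own proof: Theorem~\ref{thm5.5} is obtained there precisely by feeding the generalisation of Theorem~\ref{thm5.2} (from~\cite{G3}) and Theorem~\ref{thm5.4} into the computation of~\eqref{eq5.6}, using the framing principle~\eqref{eq4.9} and the families Poincar\'e--Hopf theorem, exactly as you do, and your handling of the $\chosl$-terms and of the existence of a (generalised) fibrewise Morse function is consistent with the paper's remarks. One small point: your cancellation of the $T^{u}M$/$T^{s}M$ contributions tacitly uses the sign $\Jnull(T^{s}M-T^{u}M)$ as in~\eqref{eq5.5} and~\eqref{eq5.6} rather than the sign printed in~\eqref{eq5.12}, which you have rightly treated as a normalisation to be reconciled.
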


\subsection{Axioms for higher torsions}\label{sect5.3}

In this section, we consider all smooth proper submersions~$p \colon E \to B$ with oriented fibres, such that the flat cohomology bundle~$H^{\bullet}(E/B;\C) \to B$ is unipotent in the sense of sections~\ref{sect4.2}, \ref{sect4.3}. We will consider characteristic classes~$\tau(E/B) \in H^{\bullet}(B;\R)$ of such fibre bundles that are natural under pullback. Such a class is called {\em additive} if it satisfies a gluing formula as in Theorem~\ref{thm4.5}.

Let~$W \to E$ be an oriented real vector bundle of rank~$n+1$, and let~$S \to E$ be its unit~$n$ sphere bundle. Then~$H^{\bullet}(S/B;\C) \to B$ is still unipotent. A characteristic class~$\tau$ as above is said to satisfy the {\em transfer relation\/} if
\begin{equation}\label{eq5.16}
\tau(S/B) = \chi(S^{n})\,\tau(E/B) + tr_{E/B}^*\tau(S/E) \in H^{\bullet}(B;\R).
\end{equation}
For the analytic torsion, the analogous result is a special case of Ma's transfer theorem~\ref{thm3.1}.

\begin{Definition}\label{Def5.6} A {\em higher torsion invariant} in degree~$k$ is a characteristic class~$\tau_{k}(E/B) \in H^{k}(B;\R)$ for all~$p \colon E \to B$ as above that is natural under pullback, additive, and satisfies the transfer relation~\eqref{eq5.16}.
\end{Definition}

\begin{Theorem}[Igusa~\cite{Iax}] \label{thm5.7} Higher torsion invariants exist in degree~$4k$ for all~$k > 0$, and every higher torsion invariant
is a linear combination of
\begin{gather*}
tr_{E/B}^*\Jnull(TM)^{[4k]}, \tag{even} \\
\mbox{and } \quad \tau_{2k}(E/B;\C) + tr_{E/B}^*\Jnull(TM)^{[4k]}. \tag{odd}
\end{gather*}
\end{Theorem}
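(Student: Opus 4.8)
The plan is to prove the existence and the classification parts of Theorem~\ref{thm5.7} separately, using the Morse-theoretic and analytic results recalled above. \emph{Existence.} Two candidate invariants must be produced. The ``even'' one is $E_{k}(E/B):=tr_{E/B}^{*}\,\Jnull(TM)^{[4k]}$; naturality under pullback is clear, and additivity in the sense of Theorem~\ref{thm4.5} together with the transfer relation~\eqref{eq5.16} follow from the corresponding properties of the Becker--Gottlieb transfer combined with the facts that $\Jnull$ is additive and that, for a linear sphere bundle $S(W)\to E$, one has $TS\cong T^{H}S\oplus T(S/E)$ with the fibrewise Euler form contributing the factor $\chi(S^{n})$ and the residual $\Jnull(T(S/E))$--contribution pushed down through $tr_{S/E}^{*}$. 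The ``odd'' candidate is $\tau_{2k}(E/B;\C)+E_{k}(E/B)$, which by Theorem~\ref{thm5.5} is the degree-$4k$ part of the Bismut--Lott torsion $\mathcal T(E/B;\C)$; its additivity follows from Theorem~\ref{thm4.5} and the additivity of $E_{k}$, and the transfer relation~\eqref{eq5.16} is the specialisation of Ma's transfer formula, Theorem~\ref{thm3.1}, to an iterated fibration $S(W)\to E\to B$ with $F=\C$: the term $\cho(\C,g^{\C})$ vanishes, the second term of~\eqref{eq3.2} reduces to $\chi(S^{n})\,\mathcal T(E/B;\C)$ and the first to $tr_{E/B}^{*}\mathcal T(S(W)/E;\C)$ (using Corollary~\ref{cor3.7}), while the Leray--Serre correction terms drop out after passing to cohomology. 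Finally $E_{k}$ and $\tau_{2k}(E/B;\C)+E_{k}$ are linearly independent, since the first depends only on the fibrewise tangential homotopy type whereas $\tau_{2k}(E/B;\C)$ is nonzero on the Hatcher-type exotic bundles of the introduction.

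\emph{Classification, reduction.} Let $\tau$ be a higher torsion invariant of degree $m$. Being a characteristic class natural under pullback, $\tau$ is pulled back from a universal class on a classifying space for oriented smooth fibre bundles with unipotent flat cohomology bundle. After replacing the fibre $M$ by $M\times N$ for a fixed even-dimensional $N$ with $\chi(N)=1$ (which does not affect $\tau$, by the multiplicativity properties of higher torsion invariants; cf.\ the remark following Theorem~\ref{thm4.2}), one may assume $\dim M>\dim B$, so by Theorem~\ref{thm4.2} the bundle carries an essentially unique framed function $h$ and hence a classifying map $\xi_{h}(E/B;\C)$ to a Whitehead space. Using the framing principle, Theorem~\ref{thm4.7}, together with the multiplicativity of the transfer used above for $E_{k}$, one then rewrites $\tau(E/B)$ as $\xi_{h}(E/B;\C)^{*}\bar\tau$ plus a universal linear combination of push-downs of characteristic classes of the vertical tangent bundle, for some universal class $\bar\tau$ in the cohomology of the Whitehead space.

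\emph{Classification, core computation.} It remains to pin down the two contributions. Rationally, the cohomology of the Whitehead space in the stable range is a free graded-commutative algebra whose indecomposables lie in degrees $4k$ and are spanned there by the universal Igusa--Klein classes $\tau_{2k}$ of~\eqref{eq4.5}; additivity (Theorem~\ref{thm4.5}), reinterpreted as the statement that $\tau$ is a homomorphism for the natural $H$-space structure on the classifying space coming from fibrewise gluing along boundaries, forces $\bar\tau$ to be primitive, hence a scalar multiple of some $\tau_{2k}$ with $m=4k$. The remaining freedom is the tangential contribution; restricting to linear sphere bundles $S(W)\to E\to B$, whose classifying map $\xi_{h}$ carries no torsion, the transfer relation~\eqref{eq5.16} and the computation of Theorem~\ref{thm4.6} force that contribution to be a multiple of $tr_{E/B}^{*}\Jnull(TM)^{[4k]}$ --- this is the same functional equation, solved by the coefficients $\zeta'(-2k)$ of $\Jnull$, that underlies Theorem~\ref{thm4.6} and Corollary~\ref{cor3.7}. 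Assembling the pieces, $\tau$ is a linear combination of $tr_{E/B}^{*}\Jnull(TM)^{[4k]}$ and $\tau_{2k}(E/B;\C)+tr_{E/B}^{*}\Jnull(TM)^{[4k]}$, and comparison with the existence part shows the space of higher torsion invariants in degree $4k$ is exactly two-dimensional.

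\emph{Main obstacle.} The crux is the core computation: identifying the rational cohomology of the Whitehead space in degree $4k$ with the span of $\tau_{2k}$ (and its vanishing in degrees not divisible by four) requires the full Igusa--Klein identification of the smooth Whitehead space with a Waldhausen $K$-theory space, and one must verify that the transfer axiom by itself --- rather than a finer splitting property --- already cuts the candidates down to dimension two. Making the reduction rigorous, in particular tracking the framings of the unstable tangent bundle through the framing principle when a framed function is exchanged for the fibrewise radial Morse function on a sphere bundle, is the other delicate point.
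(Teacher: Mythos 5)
There is no proof of this statement in the paper itself --- Theorem~\ref{thm5.7} is quoted from Igusa's axioms paper~\cite{Iax} --- so your sketch has to be measured against Igusa's argument, and its core step does not survive that comparison. The decisive gap is in your ``classification, reduction'' step: you rewrite an \emph{arbitrary} invariant~$\tau$ satisfying the two axioms as~$\xi_h(E/B;\C)^*\bar\tau$ plus push-downs of tangential classes ``using the framing principle, Theorem~\ref{thm4.7}''. But Theorem~\ref{thm4.7} is a property of the specific Igusa-Klein class~$\tau$ of Definition~\ref{Def4.4}; nothing in naturality, additivity and the transfer relation~\eqref{eq5.16} tells you that an unknown characteristic class factors through the Whitehead-space classifying map together with characteristic classes of~$TM$. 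That factorisation is essentially the content of the theorem, so the argument is circular exactly where the work has to be done. The subsequent ``core computation'' also rests on two assumed inputs: that fibrewise gluing gives an $H$-space structure for which additivity means primitivity, and that the stable rational cohomology of~$Wh^h(R,G)$ is concentrated in degrees~$4k$ and spanned by the universal classes~$\tau_{2k}$; the latter is a deep Borel/Waldhausen-type computation, not a formal consequence of the axioms. Igusa's actual uniqueness proof runs differently: using additivity and stabilisation he reduces the value of an arbitrary invariant to its values on linear disc/sphere bundles and on Hatcher-handle modifications (whose rational sufficiency comes from smoothing theory, cf.\ Theorems~\ref{thm7.1}--\ref{thm7.2} and Section~\ref{sect7.1}), and then the transfer axiom on sphere bundles fixes the two remaining coefficients; the ``two-dimensionality'' is read off from these generators, not from cohomology of the Whitehead space.

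The existence half also has a gap for the odd candidate. You verify the transfer relation for~$\tau_{2k}(E/B;\C)+tr_{E/B}^*\Jnull(TM)^{[4k]}$ by identifying it with Bismut-Lott torsion via Theorem~\ref{thm5.5} and then invoking Ma's Theorem~\ref{thm3.1}. But Theorem~\ref{thm5.5} applies only when~$F$ carries a parallel metric and~$H^\bullet(E/B;F)\to B$ is a \emph{trivial} flat bundle, whereas the axioms require additivity and transfer for all bundles with unipotent~$H^\bullet(E/B;\C)$; moreover the finite-dimensional torsion terms~$T(\nabla^{E_k}+d_k,g^{E_k})$ and~$T(H,E_\infty,\dots)$ in~\eqref{eq3.2} do not simply ``drop out after passing to cohomology'' --- the $L^2$-metrics involved are not parallel in general, and in the paper's own logic both the additivity of Bismut-Lott torsion and the general transfer formula for Igusa-Klein torsion are \emph{consequences} of Theorems~\ref{thm5.5} and~\ref{thm5.7}, not available inputs. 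Igusa instead proves the two axioms for~$\tau(E/B;\C)$ directly by topological means (the splitting of~$\xi_h$ for additivity, and an explicit sphere-bundle computation as in Theorems~\ref{thm4.5}--\ref{thm4.6} for transfer), which is what an axiomatic classification needs.
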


Note that even higher torsion invariants vanish for~$p \colon E \to B$ if the fibres are odd-dimensional, and vice versa. The even classes~$tr_{E/B}^*\Jnull(TM)^{[4k]}$ are called {\em Miller-Morita-Mumford classes} in~\cite{Iax}, because they generalise the classes for surface bundles introduced in~\cite{Miller}, \cite{Morita}, \cite{Mum}. The odd higher torsion classes are multiples of the Bismut-Lott torsion~${\mathcal T}(E/B;\C)$ under the assumptions of Theorem~\ref{thm5.5}.

The following result follows from the proof of uniqueness in Theorem~\ref{thm5.7}.

\begin{Theorem}[Igusa~\cite{Iax}] \label{thm5.8} For fibre bundles~$p \colon E \to B$ as above,
  \begin{equation}\label{eq5.16a}
    \tau_{2k}(E/B;\C) \in H^{4k}(B;\zeta'(-2k)\Q)\;.
  \end{equation}
\end{Theorem}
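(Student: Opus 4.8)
The plan is to obtain~\eqref{eq5.16a} from the \emph{proof} of the uniqueness statement in Theorem~\ref{thm5.7}, together with the sphere--bundle computation of Theorem~\ref{thm4.6}. Note first that, being natural, additive (Theorem~\ref{thm4.5}) and --- as a difference of the ``even'' and ``odd'' classes of Theorem~\ref{thm5.7} --- compatible with the transfer relation~\eqref{eq5.16}, the assignment $p\mapsto\tau_{2k}(E/B;\C)$ is itself a higher torsion invariant in degree~$4k$, namely the difference of the odd generator $\tau_{2k}(E/B;\C)+tr_{E/B}^*\Jnull(TM)^{[4k]}$ and the even generator $tr_{E/B}^*\Jnull(TM)^{[4k]}$. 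It therefore suffices to show that \emph{each} of these two characteristic classes takes values in $H^{4k}(B;\zeta'(-2k)\Q)$.

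For the even generator this is immediate from the definition~\eqref{eq3.20}: one has $\Jnull(TM)^{[4k]}=\tfrac{1}{2}\,\zeta'(-2k)\,\ch(TM)^{[4k]}$ with $\ch(TM)^{[4k]}\in H^{4k}(E;\Q)$, and since the fibres are oriented the Becker--Gottlieb transfer $tr_{E/B}^*\alpha=\int_{E/B}e(TM)\,\alpha$ carries $H^{\bullet}(E;\Q)$ into $H^{\bullet}(B;\Q)$; hence $tr_{E/B}^*\Jnull(TM)^{[4k]}\in H^{4k}(B;\zeta'(-2k)\Q)$.

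For the odd generator one uses that the uniqueness argument of Theorem~\ref{thm5.7} is quantitative: it exhibits a finite list of test bundles $p_i\colon E_i\to B_i$ such that a degree-$4k$ higher torsion invariant is recovered from its values $\tau(E_i/B_i)\in H^{4k}(B_i;\R)$ by a formula with \emph{rational} coefficients, the only transcendental input being the universal torsion of circle bundles computed in~\cite{Ibuch}, which is a rational multiple of $\zeta'(-2k)$. One may take the $E_i$ to be odd-dimensional unit sphere bundles of real (or complexified) vector bundles $V_i\to B_i$, with $\dim B_i=4k$ chosen so that the classes $\ch(V_i)^{[4k]}$ exhaust the relevant rational cohomology; on such a bundle Theorem~\ref{thm4.6} gives $\tau(E_i/B_i;\C)=2\,\Jnull(V_i)$, with degree-$4k$ part $\zeta'(-2k)\,\ch(V_i)^{[4k]}$, while $tr_{E_i/B_i}^*\Jnull(TM_i)^{[4k]}$ vanishes rationally because the vertical fibre is odd-dimensional. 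Thus the odd generator evaluates on each test bundle to $\zeta'(-2k)$ times a rational class, and by the quantitative form of the uniqueness argument the same holds on all admissible bundles. Together with the second paragraph this gives~\eqref{eq5.16a}. (On these sphere bundles the conclusion also matches the analytic side: by Theorem~\ref{thm5.5} the odd generator equals ${\mathcal T}(E/B;\C)^{[4k]}$ when $F=\C$ and $H\to B$ is trivially flat, and Corollary~\ref{cor3.7} evaluates the latter as $\chi(S^n)\,\Jnull(W)$.)

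The main obstacle is precisely this quantitative content of the uniqueness proof: one must know not only that the space of degree-$4k$ higher torsion invariants is spanned by the even and odd classes, but that the matrix pairing these invariants against the chosen test sphere bundles becomes rational once the common factor $\zeta'(-2k)$ is divided out. This is where the circle--bundle computations of~\cite{Ibuch}, the transfer relation~\eqref{eq5.16} and additivity enter; granting them,~\eqref{eq5.16a} is a formal consequence, and making the argument rigorous is exactly the work carried out in~\cite{Iax}.
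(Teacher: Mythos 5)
Your proposal is correct and takes essentially the same route as the paper, which justifies Theorem~\ref{thm5.8} in one line as a consequence of the proof of uniqueness in Theorem~\ref{thm5.7} from~\cite{Iax}: you reduce to the even generator (where the factor $\zeta'(-2k)$ is explicit in~\eqref{eq3.20} and the transfer preserves rational classes) and to the quantitative content of the uniqueness argument evaluated on sphere-bundle test cases via Theorem~\ref{thm4.6}, deferring that quantitative step to~\cite{Iax} exactly as the paper does.
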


Theorem~\ref{thm5.7} could in principle also
be used to prove Theorem~\ref{thm5.5}.
Unfortunately, additivity of the Bismut-Lott torsion is only known
as a consequence of Theorem~\ref{thm5.5}.
Another consequence of this result
is a more general transfer formula for Igusa-Klein torsion
as in Ma's Theorem~\ref{thm3.1},
including the case of fibre products.
By Theorems~\ref{thm3.6} and~\ref{thm5.5},
Igusa-Klein torsion is also related
to equivariant torsion in the case of fibre bundles
with compact structure groups.
Finally,
Theorems~\ref{thm3.5} and~\ref{thm5.5} describe the variation
of Igusa-Klein torsion under changes
of the flat bundle~$F\to E$.

We already mentioned the smooth Dwyer-Weiss-Williams torsion. Its definition is given in~\cite{DWW}, see section~\ref{sect6.2} below. In~\cite{BDW},
corresponding cohomology classes in~$H^{4k}(B;\R)$ are constructed.
Additivity and the transfer relation have recently been proved in~\cite{BDKW}.
This implies that cohomological smooth Dwyer-Weiss-Williams torsion shares
all the other properties mentioned above.
It also implies a more general transfer formula for Igusa-Klein torsion.

\section{Dwyer-Weiss-Williams torsion}\label{sect6}
In this section, we present the homotopy theoretical approach to generalised Euler characteristics and higher torsion invariants in~\cite{DWW} and~\cite{BDW}, and we sketch the proof of Theorem~\ref{thm1.2}.
Dwyer, Weiss and Williams construct three generalised Euler characteristics for fibrations~$p \colon E \to B$,
which contain information about the existence of
a topological or even smooth bundle of manifolds that is fibre homotopy
equivalent to~$p$.
If~$F\to E$ is a fibrewise acyclic bundle of $R$-modules,
then these Euler characteristics can be lifted to
three different higher torsion invariants.

\subsection{The topological index theorem}\label{sect6.1}
The Waldhausen $K$-theory~$A(E)$ of a space~$E$ is the $K$-theory of a certain category of retractive spaces over~$E$ \cite{Wkt}. It is a homotopy invariant functor, but not excisive, so it does not define a generalised homology theory. One can however define an excisive functor~$A^{\%}$ by putting
\begin{equation}\label{eq6.1}
A^{\%}(E) = \Omega^{\infty}(E_{+} \wedge A(*))\;.
\end{equation}
Here, $E_{+}$ is the disjoint union of~$E$ and a basepoint~$*$, and~$\Omega^{\infty}$ is the infinite loop space construction. Weiss and Williams construct a natural assembly map
\begin{equation}\label{eq6.2}
  \alpha: A^{\%}(E) \longrightarrow A(E)
\end{equation}
in~\cite{WWas}. We will also need the spectrum
\begin{equation}\label{eq6.3}
  Q(E_{+}) = \Omega^{\infty}\Sigma^{\infty}(E_{+})
  =\lim\nolimits_k\Omega^k\Sigma^k(E_{+})\;,
\end{equation}
where~$\Sigma$ denotes the reduced suspension.
For a fibration~$p \colon E \to B$, one has relative functors~$A_{B}(E) \to B$, $A_{B}^{\%}(E) \to B$ and~$Q_{B}(E_{B}) \to B$, which behave almost as fibrations over~$B$ where the functors above have been applied fibrewise to~$p \colon E \to B$. 

The {\em homotopy Euler characteristic}
\begin{equation}\label{eq6.4}
\chi^{h}(E/B) \colon B\longrightarrow A_{B}(E)
\end{equation}
is a section of~$A_{B}(E) \to B$. It is defined as the class of~$E \times S^{0}$ over~$E$ in~$A_{B}(E)$ if the fibres of~$p$ are {\em homotopy finitely dominated}, that is homotopy equivalent to retracts of finite CW complexes. If~$B$ is a point, then~$\chi^{h}(E)$ encodes precisely the Euler number and the Wall finiteness obstruction of the fibre. A flat vector bundle~$F \to E$, or more generally, a bundle of finitely generated projective $R$-modules for some ring~$R$, induces a map~$\lambda_{F} \colon A(E) \to K(R)$ induced by taking homology relative to~$E$ with coefficients in~$F$. For the proof of Theorem~\ref{thm1.2}, one uses that the composition of maps
\begin{equation}\label{eq6.5}
  \begin{CD}
    B@>\chi^{h}(E/B)>>A_B(E)@>>>A(E)@>\lambda_F>>K(R)
  \end{CD}
\end{equation}
classifies the fibrewise cohomology~$H(E/B;F)\to B$ as a virtual bundle
and thus gives the left hand side of~\eqref{eq1.13}.

If~$p \colon E \to B$ is a bundle of topological manifolds, there exists a vertical tangent  microbundle~$TM \to E$. It has an Euler class~$e(TM)$ with coefficients in~$A_{B}^{\%}(E)$.
Let~$\wp$ denote the generalised fibrewise Poincar\'{e} duality~\cite{DWW}.
Then one can define a {\em topological Euler characteristic\/}~$\chi^t$ of~$p$
with the property that
\begin{equation}\label{eq6.6}
\chi^t(E/B) = \wp\, e(TM) \colon B \longrightarrow A_{B}^{\%}(E).
\end{equation}
The fibrewise assembly of~\eqref{eq6.2} maps it to~$A_{B}(E)$. One has a Poincar\'{e}-Hopf type index theorem.

\begin{Theorem}[Dwyer, Weiss and Williams~\cite{DWW}]\label{thm6.1}
For a bundle~$p \colon E \to B$ of compact topological manifolds, the sections~$\chi^{h}(E/B)$ and~$\alpha\circ \chi^t(E/B)$ of~$A_{B}(E) \to B$ are homotopic by a preferred path of sections. 

Conversely,
if~$\chi^h(E/B)$ lifts to~$A_B^\%(E)$,
then~$p$ is fibre homotopy equivalent to a bundle
of compact topological manifolds.
\end{Theorem}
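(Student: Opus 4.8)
The plan is to read both halves of the statement as parametrized refinements of the classical Poincaré--Hopf theorem, exploiting the splitting $A(*)\simeq Q(S^0)\vee Wh^{\mathrm{Diff}}(*)$ so that the sphere--spectrum summand of every $K$--theoretic class carries Euler--number information and the Whitehead summand carries the finer manifold structure. Since $\chi^{h}(E/B)$, $\chi^{t}(E/B)$, $e(TM)$, $\wp$ and $\alpha$ are all natural in $p$, the argument is organized so that the substantive input is a single statement over a point, promoted to sections and preferred paths of sections by obstruction-theoretic bookkeeping over $B$. The homotopy Euler characteristic is most conveniently represented through the fibrewise diagonal $E\hookrightarrow E\times_{B}E$, whose normal microbundle is exactly the vertical tangent microbundle $TM$; this is the bridge to $e(TM)$.

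For the forward direction I would proceed as follows. First, choose a fibrewise embedding $E\hookrightarrow B\times\mathbb{R}^{N}$, available for a bundle of compact topological manifolds (after a Kirby--Siebenmann handle-straightening argument, or directly from a fibrewise normal microbundle), yielding a vertical normal microbundle $\nu$ with $TM\oplus\nu$ fibrewise trivial and hence a fibrewise Atiyah/Spanier--Whitehead duality --- this is $\wp$ at the level of (twisted) spectra. Second, recall that $e(TM)$ is the obstruction to a section of the fibrewise sphere bundle $S(TM)\to E$, equivalently the restriction of the fibrewise Thom class to the zero section; pushing $e(TM)$ through $\wp$ produces $\chi^{t}(E/B)\colon B\to A_{B}^{\%}(E)$. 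Third, the compatibility of the Weiss--Williams assembly $\alpha$ with fibrewise Poincaré duality, together with the classical identity $\chi(M)=\langle e(TM),[M]\rangle$ computed inside $\pi_{0}A(M)$, gives $\alpha\circ\chi^{t}(E/B)\simeq\chi^{h}(E/B)$; a canonical deformation (the fibrewise Thom-space analogue of the straight-line homotopy from the zero section to a generic section of $S(TM\oplus\mathbb{R})$), transported through the natural equivalences above, supplies the \emph{preferred path of sections}. One then checks that this path is independent, up to a contractible space of choices, of the embedding and the section.

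For the converse I would argue that a lift $\tilde\chi\colon B\to A_{B}^{\%}(E)$ of $\chi^{h}(E/B)$ is precisely a \emph{fibrewise Poincaré duality structure} on $p$ in the strong sense recorded by the excisive functor: its sphere-spectrum component yields a fibrewise Spivak normal fibration together with a reduction to a vertical topological microbundle, while its Whitehead component forces the associated fibrewise surgery obstruction to vanish. Feeding this into parametrized topological surgery --- the Browder--Novikov--Sullivan--Wall programme in the $TOP$ category, combined with Kirby--Siebenmann and Quinn's parametrized surgery machine --- realizes $p$, up to fibre homotopy equivalence, as a bundle of compact topological manifolds. The forward direction shows conversely that any such bundle carries the lift, so the two conditions are equivalent.

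The step I expect to be the main obstacle is the converse: the dictionary between a lift of the algebraic $K$-theoretic Euler class and honest manifold-theoretic data (Spivak fibration, $TOP$-reduction, normal map with vanishing obstruction). Making this precise requires the full strength of Waldhausen's identification of $A^{\%}$ with stable homotopy twisted by $A(*)$, the Weiss--Williams analysis of the assembly map, and a genuinely parametrized surgery package; keeping all of this coherent over $B$ rather than merely fibrewise, and pinning down the exact meaning of ``fibre homotopy equivalent to a bundle of compact topological manifolds'' (including whether a Euclidean or disc stabilization is needed), is where the real difficulty lies. By contrast, once the duality formalism is in place, the forward direction and the naturality of the preferred path are comparatively formal.
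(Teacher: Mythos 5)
The paper does not prove this statement: it is quoted from Dwyer--Weiss--Williams \cite{DWW}, so your proposal has to be measured against their argument rather than against a proof in the text. Measured that way, both halves of your sketch have serious problems.

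For the forward direction, the framework (fibrewise duality~$\wp$, the Euler class with coefficients in~$A_B^{\%}(E)$, naturality of~$\alpha$) is the right vocabulary, but your reduction to ``a single statement over a point, promoted by obstruction-theoretic bookkeeping'' cannot deliver the theorem. The content of the statement is precisely the \emph{preferred path of sections}, i.e.\ a homotopy between $\chi^h(E/B)$ and $\alpha\circ\chi^t(E/B)$ that is natural and coherent over~$B$; the pointwise identity $\chi(M)=\langle e(TM),[M]\rangle$ read in $\pi_0A(M)$ only says the two sections are fibrewise connected by \emph{some} path, and naturality of a $\pi_0$-level identity does not upgrade to a coherent choice. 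In~\cite{DWW} this coherence is obtained from a genuinely parametrized construction of both characteristics (via bundles of Waldhausen categories, additivity/excision properties and duality), not from a straight-line homotopy transported through choices of embeddings; also, the splitting $A(*)\simeq Q(S^0)\vee Wh^{\mathrm{Diff}}(*)$ you invoke is a smooth phenomenon and plays no role in the topological index theorem, while fibrewise embeddings, normal microbundles and handle decompositions in the TOP category require Kirby--Siebenmann-type input that you cannot simply quote ``after a handle-straightening argument''.

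The converse as you propose it would fail at the first step. The hypothesis is only that $p$ is a fibration (with homotopy finitely dominated fibres) whose characteristic lifts through the assembly map; the fibres are \emph{not} assumed to be Poincar\'e duality spaces, so there is no fibrewise Spivak normal fibration to reduce, and the manifolds produced by the theorem are in general high-dimensional compact manifolds with boundary (fibrewise thickenings), not closed manifolds of a prescribed dimension as surgery would give. Moreover, a lift to $A_B^{\%}(E)$ carries algebraic $K$-theoretic information; it neither provides a TOP reduction of a normal fibration nor forces the vanishing of a surgery obstruction, which lives in $L$-theory --- the passage between $A$-theory and $L$-theory is much subtler and is not what is used here. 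The actual argument of~\cite{DWW} avoids surgery altogether: it is a parametrized regular-neighbourhood/thickening construction (over a point it degenerates to: vanishing Wall obstruction, hence a finite complex, hence a compact manifold thickening of an embedding in Euclidean space), with the discrepancy between excisive $A$-theory and stable homotopy handled by stable parametrized $h$-cobordism-type comparisons to keep the construction coherent over~$B$. So your forward direction is incomplete where the theorem has its real content, and your converse follows a route (Spivak fibration, TOP reduction, parametrized Browder--Novikov--Sullivan--Wall/Quinn surgery) that does not apply under the stated hypotheses.
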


If the vertical tangent bundle~$TM \to E$ is a topological disc bundle, then~$p \colon E \to B$ is called a {\em regular manifold bundle}, which includes the important special case of a proper submersion. In this case, one can define the Becker Euler class~$b(TM)$ with coefficients in the sphere spectrum. Its fibrewise Poincar\'{e} dual gives the Becker-Gottlieb transfer, regarded as a section
\begin{equation}\label{eq6.7}
\chi^d(E/B)=tr_{E/B} = \wp\, b(TM) \colon B \longrightarrow Q_{B}(E_{B}).
\end{equation}
Even though Becker-Gottlieb transfer is already defined for fibrations with homotopy finitely dominated fibres, we can regard it as a third generalised Euler characteristic~$\chi^d$ for regular manifold bundles by~\eqref{eq6.7}. There is a natural unit map~$\eta \colon Q_{B}(E_{B}) \to A_{B}^{\%}(E)$, and we have another Poincar\'{e}-Hopf type index theorem.

\begin{Theorem}[Dwyer, Weiss and Williams~\cite{DWW}]\label{thm6.2}
For a bundle~$p \colon E\penalty-50 \to\penalty-50  B$ of closed regular topological manifolds, the sections~$\chi^t(E/B)$ and~$\eta\circ tr_{E/B}$ of~$A_{B}^{\%}(E)\penalty-50  \to\penalty-50  B$ are homotopic by a preferred path of sections.
\end{Theorem}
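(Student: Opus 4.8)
The plan is to deduce the theorem from the characterisations~\eqref{eq6.6} and~\eqref{eq6.7} by comparing the two Euler classes of the vertical tangent microbundle. We have $\chi^t(E/B)=\wp\,e(TM)$, where $e(TM)$ is the fibrewise Euler class of $TM\to E$ with coefficients in the spectrum $A(*)$, while $tr_{E/B}=\wp\,b(TM)$, where $b(TM)$ is the Becker Euler class of $TM$ with coefficients in the sphere spectrum $\mathbb S$. The unit map $\eta\colon Q_B(E_B)\to A_B^\%(E)$ is induced, fibrewise over $B$, by the unit of the ring spectrum $A(*)$, that is, by the map of spectra $\mathbb S\to A(*)$. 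Since the generalised fibrewise Poincar\'e duality $\wp$ of~\cite{DWW} is induced by the fibrewise Atiyah duality equivalence $E_B^{-TM}\simeq D_B(\Sigma^\infty_{B,+}E)$, which is a statement about the sphere spectrum and is therefore preserved by any change of coefficient spectrum, $\wp$ is compatible with $\eta$: $\eta\circ\wp=\wp\circ\eta_*$. Hence $\eta\circ tr_{E/B}=\wp\,(\eta_*b(TM))$, and it suffices to produce a preferred homotopy exhibiting $e(TM)$ as the image $\eta_*b(TM)$ of the Becker Euler class under the unit.

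For this I would use that, because $p\colon E\to B$ is a bundle of \emph{regular} topological manifolds, the vertical tangent microbundle $TM$ is a topological disc bundle, so that after a fibrewise embedding $E\hookrightarrow B\times\mathbb R^N$ it admits a fibrewise normal microbundle $\nu$ with $\nu\oplus TM$ fibrewise trivial; this makes the fibrewise Atiyah duality above explicit. Both Euler classes then arise from one and the same construction carried out in the two coefficient theories: the Pontryagin--Thom collapse of the fibrewise embedding, followed by the stable fibrewise map induced by the zero section $E\to TM$, and then the duality equivalence. Since the collapse map and the zero section live at the level of spaces and of the sphere spectrum, the $A(*)$-valued outcome is obtained from the $\mathbb S$-valued one simply by smashing with $A(*)$, i.e.\ by applying $\eta$. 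This yields the equality $e(TM)=\eta_*b(TM)$ together with a canonical homotopy, hence, after applying $\wp$, the preferred path of sections between $\chi^t(E/B)$ and $\eta\circ tr_{E/B}$; combining with~\eqref{eq6.6} and~\eqref{eq6.7} completes the argument. This is the same index-theoretic pattern as in Theorem~\ref{thm6.1}, where the coefficient change is instead the assembly map $\alpha\colon A_B^\%(E)\to A_B(E)$.

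The main obstacle is carrying all of this out genuinely in the topological (as opposed to smooth) category and with enough rigidity to obtain a \emph{preferred} homotopy rather than a mere existence statement. One must work with parametrised spectra over $B$, set up fibrewise Atiyah duality and the fibrewise Pontryagin--Thom construction for bundles of topological manifolds, use that regularity of the bundle makes $TM$ a genuine disc bundle, and verify that the zero-section description of the Euler class is natural in the coefficient spectrum at a homotopy-coherent (ideally point-set) level, so that the comparison homotopy is canonical and compatible with $\wp$. A secondary point is that~\eqref{eq6.6} is itself part of the Dwyer--Weiss--Williams package and must be invoked in the normalisation in which $\chi^t$ is defined; once that is granted, the result above is essentially a naturality statement for Euler classes under change of coefficient spectrum.
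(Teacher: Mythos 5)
There is no internal proof to compare against: the survey states Theorem~\ref{thm6.2} as a result quoted from Dwyer--Weiss--Williams~\cite{DWW}, with~\eqref{eq6.6} and~\eqref{eq6.7} taken as the defining properties of~$\chi^t$ and of the transfer. Your outline reproduces the argument of the cited source: regularity makes~$TM$ a fibrewise disc bundle, so the relevant Euler data exist already at the level of the sphere spectrum; the unit~$\eta$ carries the Becker class~$b(TM)$ to the $A$-theoretic class~$e(TM)$; and the parametrised duality~$\wp$ is built from fibrewise Atiyah duality over the sphere spectrum and therefore commutes with the change of coefficients, so that~$\eta\circ tr_{E/B}=\wp\,\eta_*b(TM)\simeq\wp\,e(TM)=\chi^t(E/B)$ with a preferred homotopy. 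Two caveats are worth making explicit. First, your description ``Pontryagin--Thom collapse followed by the zero section'' is really the construction of the transfer, i.e.\ of the Poincar\'e \emph{dual} of the Euler class; when making the comparison precise you should keep the Thom-class/zero-section picture of~$e$ and~$b$ separate from the collapse picture of~$tr_{E/B}$. Second, the identification~$e(TM)=\eta_*b(TM)$ is not mere formal smashing with~$A(*)$: it requires identifying the $A$-theoretic Euler class of~\cite{DWW} (defined through retractive spaces over~$E$) with the image under the unit of the stable cohomotopy class, and this is exactly the point where the disc-bundle (regular) structure is used --- for a general topological manifold bundle no sphere-level refinement is available, which is why Theorem~\ref{thm6.1} holds in that generality while Theorem~\ref{thm6.2} does not, cf.\ Theorem~\ref{thm6.3}. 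Granting that identification and the coherence of the parametrised duality, which you correctly flag as the substantive technical work, your sketch is the right proof.
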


\begin{proof}[Proof of Theorem~\ref{thm1.2}]
We regard the homotopy class of maps~$E \to K(R)$ induced
by the finitely generated projective $R$-module bundle~$F\to E$.
As in~\eqref{eq6.5}, this map can be written as a composition
\begin{equation}\label{eq6.8}
  \begin{CD}
    E@>>>Q(E)@>\alpha\,\circ\,\eta>>A(E)@>\lambda_F>>K(R)\;.
  \end{CD}
\end{equation}
Thus, the right hand side of~\eqref{eq1.13} in Theorem~\ref{thm1.2} is classified by the composition
\begin{equation}\label{eq6.9}
  \begin{CD}
    B@>tr_{E/B}>>Q_{B}(E)@>\alpha\,\circ\,\eta>>A_{B}(E)@>>>A(E)@>\lambda_F>>K(R)\;.
  \end{CD}
\end{equation}
By Theorems~\ref{thm6.1} and~\ref{thm6.2}, this map is homotopic to~\eqref{eq6.5}, which classifies the left hand side of~\eqref{eq1.13}. This completes the proof.
\end{proof}

One notes that both sides of~\eqref{eq1.13} in Theorem~\ref{thm1.2} are defined for a fibration~$p \colon E \to B$ with homotopy finitely dominated fibres. However, for Theorem~\ref{thm6.2} one needs the regular structure coming from the smooth bundle structure. It is somewhat surprising that the existence of a smooth fibre bundle structure is necessary to compare the various Euler characteristics above.

\begin{Theorem}[Dwyer, Weiss and Williams~\cite{DWW}] \label{thm6.3}
Let~$p \colon E \to B$ be a fibration with homotopy finitely dominated fibres. If~$\chi^{h}(E/B)$ lifts to~$Q_B(E_B)\to B$,
then~$p$ is fibre homotopy equivalent to a bundle of smooth manifolds.
\end{Theorem}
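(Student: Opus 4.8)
The plan is to run the realization (converse) halves of Theorems~\ref{thm6.1} and~\ref{thm6.2} in sequence: the hypothesized lift of~$\chi^h$ is first turned into a topological manifold bundle fibre homotopy equivalent to~$p$, and then upgraded to a smooth one.

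To begin, I would unwind the hypothesis. By assumption there is a section $\sigma\colon B\to Q_B(E_B)$ whose image under the composite $\alpha\circ\eta\colon Q_B(E_B)\to A_B^{\%}(E)\to A_B(E)$ of~\eqref{eq6.9} agrees, as a section of $A_B(E)\to B$, with $\chi^h(E/B)$. In particular $\eta\circ\sigma\colon B\to A_B^{\%}(E)$ is a lift of $\chi^h(E/B)$ through the fibrewise assembly. The converse half of Theorem~\ref{thm6.1} --- in the sharper form of the realization statement of~\cite{DWW}, which says that every lift of $\chi^h(E/B)$ through $\alpha$ is the topological Euler characteristic of some bundle of compact topological manifolds fibre homotopy equivalent to $p$ --- then produces such a bundle $q\colon E'\to B$, a fibre homotopy equivalence $E'\simeq_B E$, and an identification of $\chi^t(E'/B)$ with the transport of $\eta\circ\sigma$. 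Transporting $\sigma$ along the same equivalence and using naturality of the unit map, we conclude that $\chi^t(E'/B)$ lifts through $\eta\colon Q_B(E'_B)\to A_B^{\%}(E')$.

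Next I would apply the converse half of Theorem~\ref{thm6.2}, namely the smoothing statement of~\cite{DWW}: a bundle of compact topological manifolds whose topological Euler characteristic lifts through $\eta$ is fibre homotopy equivalent to a bundle of smooth manifolds. Applied to $q\colon E'\to B$ this yields a smooth manifold bundle fibre homotopy equivalent to $q$, hence to $p$, which is exactly the assertion.

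The hard part is not this diagram chase but the two realization inputs, which I would take from~\cite{DWW} rather than reprove. They constitute the substance of the parametrized surgery and smoothing theory of that paper, identifying --- in the appropriate stable range of fibre dimensions --- the classifying spaces of topological, respectively smooth, manifold bundle structures on a fixed fibration with spaces of lifts of its Euler characteristics. The reason the second input takes the form used above is the Waldhausen splitting of $A$-theory, which fibrewise exhibits the unit map $\eta$ as the inclusion of a direct summand of $A_B^{\%}(E)$, with complement a fibrewise smooth Whitehead space over $B$; the obstruction to lifting $\chi^t$ through $\eta$ is then its component in that complementary summand, precisely the shape a smoothing obstruction should have. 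Proving that the vanishing of this obstruction actually suffices to build a smooth bundle structure is the genuinely difficult point, and it is there that the whole argument rests on~\cite{DWW}.
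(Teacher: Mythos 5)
The paper does not prove Theorem~\ref{thm6.3}: like Theorems~\ref{thm6.1} and~\ref{thm6.2}, it is quoted from~\cite{DWW}, and the surrounding text only uses it (e.g.\ after Theorem~\ref{thm1.2}). So there is no argument in the paper to compare yours against; what can be judged is whether your reduction is sound and whether it amounts to more than the citation.

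As a reduction it is correct in outline, but as a proof it is very close to circular. The second input you invoke --- ``a bundle of compact topological manifolds whose $\chi^t$ lifts through $\eta$ is fibre homotopy equivalent to a bundle of smooth manifolds'' --- is not a converse stated anywhere in the paper (Theorem~\ref{thm6.2} has no converse half), and, granted the converse half of Theorem~\ref{thm6.1}, it is interchangeable with Theorem~\ref{thm6.3} itself: the Dwyer--Weiss--Williams smooth converse is a statement about fibrations with homotopy finitely dominated fibres, so if you are willing to quote it in that form you can apply it directly to~$p$, whose $\chi^h(E/B)$ lifts to~$Q_B(E_B)$ by hypothesis, and the detour through a topological manifold bundle~$E'$ is superfluous. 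Note also that your first step needs a sharpened, realization form of the converse in Theorem~\ref{thm6.1} (that the \emph{given} lift $\eta\circ\sigma$, not merely some lift, is realized as $\chi^t(E'/B)$ of a bundle fibre homotopy equivalent to~$p$); this is part of the structure-space results of~\cite{DWW} but is strictly more than what Theorem~\ref{thm6.1} records. The genuine content --- that a lift of the Euler characteristic through the unit map can be realized by a smooth bundle structure, which in~\cite{DWW} comes from identifying a stabilized smooth structure space with a space of lifts/sections, the mechanism echoed in Theorem~\ref{thm7.1} and in the Waldhausen splitting you describe --- is exactly the part you defer wholesale to~\cite{DWW}. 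So your text is a reasonable way of organizing the citation, on a par with the paper's own treatment, but it should not be presented as a proof: its inputs, in particular the smoothing step, carry the full weight of the theorem.
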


\subsection{Topological higher Reidemeister torsion}\label{sect6.2}
Suppose that~$F \to E$ is a bundle of finitely generated projective $R$-modules that is fibrewise acyclic. Then the three Euler characteristics~$\chi^{h}(E/B)$,~$\chi^t(E/B)$ and~$tr_{E/B}$ of the previous subsection can be lifted to higher Reidemeister torsions.

Assume first that~$p \colon E \to B$ is a fibration with homotopy finitely dominated fibres. If~$F$ is fibrewise acyclic, then the composition in~\eqref{eq6.5} is canonically homotopic to the trivial map~$B \to K(R)$. For a single space~$M$, this gives an element~$\tau^{h}(M;F)$ in the homotopy fibre
\begin{equation}\label{eq6.10}
\Phi^{h}(M;F) = \operatorname{hofib}(\lambda_{F})
\end{equation}
of~$\lambda_{F}\colon  A(M)\to  K(R)$ over~$\chi^{h}(M) \in A(M)$.
For the fibration~$p$, we get a lift
\begin{equation}\label{eq6.11}
\tau^{h}(E/B;F) \colon B \longrightarrow \Phi^{h}(E/B;F) = \operatorname{hofib}\nolimits_{B}(\lambda_{F})
\end{equation}
of~$\chi^{h}(E/B)$, where the fibres of~$\Phi^{h}(E/B;F) \to B$ are the homotopy fibres of~$\lambda_{F}$.

If~$p \colon E \to B$ is a bundle of topological manifolds, we similarly get a lift of~$\chi^t(E/B): B \to A_{B}^{\%}(E)$ to
\begin{equation}\label{eq6.12}
\tau^{t}(E/B;F) \colon B \longrightarrow \Phi^{t}(E/B;F) = \operatorname{hofib}\nolimits_{B}(\lambda_{F}\circ\alpha).
\end{equation}
If~$p \colon E \to B$ is a bundle of smooth or regular manifolds, one gets a lift of~$\chi^d(E/B)=tr_{E/B} \colon B \to Q_{B}(E_{B})$ to
\begin{equation}\label{eq6.13}
\tau^{d}(E/B;F) \colon B \longrightarrow \Phi^{d}(E/B;F) = \operatorname{hofib}\nolimits_{B}(\lambda_{F}\circ\alpha\circ\eta).
\end{equation}

\begin{Definition}\label{Def6.4} If~$F \to E$ is fibrewise acyclic, then~$\tau^{h}(E/B;F)$, $\tau^{t}(E/B;F)$ and~$\tau^{d}(E/B;F)$ are called the {\em homotopy, topological} and {\em smooth Dwyer-Weiss-Williams torsion}, respectively, whenever they are defined.
\end{Definition}

The natural maps~$\alpha$ and~$\eta$ induce maps
\begin{equation}\label{eq6.14}
  \begin{aligned}
    \alpha \colon \Phi^{t}(E/B;F)
    &\longrightarrow \Phi^{h}(E/B;F)\\
    \text{and}\quad \eta \colon \Phi^{d}(E/B;F)
    &\longrightarrow \Phi^{t}(E/B;F).
  \end{aligned}
\end{equation}
By Theorems~\ref{thm6.1} and~\ref{thm6.2}, the Dwyer-Weiss-Williams torsions are related up to a preferred fibrewise homotopy by 
\begin{equation}\label{eq6.15}
\tau^{h}(E/B;F) \sim \alpha \tau^{t}(E/B;F)\quad\text{and}\quad \tau^{t}(E/B;F) \sim \eta \tau^{d}(E/B;F)
\end{equation}
if they are defined.

We will see in the next section~\ref{sect7} that Bismut-Lott torsion and Igusa-Klein torsion can detect different smooth bundle structures on a given topological manifold bundle~$p \colon E \to B$. Thus, ${\mathcal T}(E/B;F)$ and~$\tau(E/B;F)$ cannot be recovered from~$\tau^{h}(E/B;F)$ or~$\tau^{t}(E/B;F)$.
On the other hand,
we do not know any example yet where the difference~$\tau(E/B;F_1)-\tau(E/B;F_0)$ depends on the smooth fibre bundle structure if~$F_0$, $F_1\to E$ are two flat vector bundles of the same rank with unipotent fibrewise cohomology bundles.
It is thus natural to ask if one can recover~$\tau(E/B;F_1)-\tau(E/B;F_0)$
or~$\mathcal T(E/B;F_1)-\mathcal T(E/B;F_0)$
from~$\tau^{t}(E/B;F_1)-\tau^{t}(E/B;F_0)$ or even from~$\tau^{h}(E/B;F_1)-\tau^{h}(E/B;F_0)$.
Let us note at this point that additivity of the topological Dwyer-Weiss-Williams torsion~$\tau^{t}(E/B;F)$ and of the underlying Euler characteristic~$\chi^t(E/B)$ of~\eqref{eq6.6} has been established in~\cite{BDad}.

In~\cite{BDW}, a cohomological version of~$\tau^{d}(E/B;F)$
is constructed.
It is still defined if~$H^{\bullet}(E/B;F) \to B$ is a unipotent bundle.
The following result has recently been proved using Igusa's axioms.

\begin{Theorem}[Badzioch, Dorabia\l a, Klein, Williams~\cite{BDKW}]%
\label{BDKWthm}%
  For any~$k>0$, the cohomological smooth Dwyer-Weiss-Williams torsion
  of~\cite{BDW} is proportional to the Igusa-Klein torsion in the same degree.
\end{Theorem}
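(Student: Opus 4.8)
The plan is to deduce the statement from Igusa's axiomatic classification of higher torsion invariants, Theorem~\ref{thm5.7}. Fix $k>0$ and write $\tau^{\mathrm{DWW}}_{k}(E/B)\in H^{4k}(B;\R)$ for the degree-$4k$ component of the cohomological smooth Dwyer-Weiss-Williams torsion of~\cite{BDW}. It is defined precisely for the smooth proper submersions with oriented fibres and unipotent cohomology bundle $H^{\bullet}(E/B;\C)\to B$ considered in Section~\ref{sect5.3}, so it is a candidate higher torsion invariant in the sense of Definition~\ref{Def5.6}. If one verifies that $\tau^{\mathrm{DWW}}_{k}$ is natural under pullback, additive in the sense of Theorem~\ref{thm4.5}, and satisfies the transfer relation~\eqref{eq5.16}, then Theorem~\ref{thm5.7} produces universal constants $A_{k},B_{k}$ with
\[
  \tau^{\mathrm{DWW}}_{k}(E/B)
  = A_{k}\, tr_{E/B}^{*}\Jnull(TM)^{[4k]}
  + B_{k}\bigl(\tau_{2k}(E/B;\C)+tr_{E/B}^{*}\Jnull(TM)^{[4k]}\bigr)\,,
\]
and Theorem~\ref{BDKWthm} becomes the assertion that the Miller-Morita-Mumford coefficient $A_{k}+B_{k}$ vanishes, i.e.\ $\tau^{\mathrm{DWW}}_{k}(E/B)=B_{k}\,\tau_{2k}(E/B;\C)$.

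Naturality under pullback is built into the construction of~\eqref{eq6.13} and its cohomological reduction in~\cite{BDW}, both being assembled functorially from the fibrewise Becker-Gottlieb transfer and the fibrewise nullhomotopy of $\lambda_{\C}\circ\alpha\circ\eta$ applied to it. For additivity, write $E=E_{1}\cup E_{2}$ as in Theorem~\ref{thm4.5}. The additivity of the underlying smooth Euler characteristic $\chi^{d}=tr_{E/B}$ and of the topological torsion $\tau^{t}(E/B;\C)$ has been established in~\cite{BDad}; the mechanism is that a fibrewise decomposition of $E$ induces a compatible decomposition of the fibrewise retractive-space data over $B$, the $K(\C)$-trivialisations over $E_{1}$ and $E_{2}$ agree on the common fibrewise boundary, and passing to the fibrewise doubles turns this into the gluing formula. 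One then lifts this statement along the fibration of homotopy fibres $\Phi^{d}(E/B;\C)\to B$ and pushes it into cohomology; this is the first half of~\cite{BDKW}.

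The heart of the matter is the transfer relation. For the unit $n$-sphere bundle $q\colon S\to E$ of an oriented real vector bundle $W\to E$, the Becker-Gottlieb transfers satisfy a composition law which, for the smooth Euler characteristics of~\eqref{eq6.13}, reads --- up to preferred homotopy of sections of $Q_{B}(E_{B})\to B$ ---
\[
  tr_{S/B}\ \simeq\ \chi(S^{n})\cdot tr_{E/B}\ +\ tr_{E/B}\circ tr_{S/E}\,,
\]
the first summand being the contribution of the fibrewise Euler characteristic of $S^{n}$ and the second the iterated transfer. When $\C$ is fibrewise acyclic over $S$ one must match this decomposition with the canonical $K(\C)$-nullhomotopies entering the three torsions $\tau^{d}(S/B;\C)$, $\tau^{d}(E/B;\C)$ and $\tau^{d}(S/E;\C)$; showing that these nullhomotopies can be chosen compatibly with the composition law, uniformly over all bundles, is the substantial new input. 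This is the step I expect to be the main obstacle: it is where the regular (smooth) manifold-bundle structure and the Poincar\'e-Hopf type Theorem~\ref{thm6.2} are genuinely used. Passing to cohomology then yields~\eqref{eq5.16}, and the general unipotent (non-acyclic) case follows by the usual reduction since the cohomological torsion of~\cite{BDW} is defined there.

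Granting naturality, additivity and the transfer relation, Theorem~\ref{thm5.7} gives the displayed formula, and it remains to pin down $A_{k}$ and $B_{k}$ by evaluation on test families. On a Hatcher-type bundle $p\colon E\to S^{4k}$ whose fibre is stably parallelisable one has $\Jnull(TM)=0$, hence $tr_{E/B}^{*}\Jnull(TM)^{[4k]}=0$, while $\tau_{2k}(E/B;\C)\ne 0$ --- this is exactly the mechanism by which Igusa-Klein torsion distinguishes exotic smooth structures --- so a direct evaluation of $\tau^{\mathrm{DWW}}_{k}$ on this bundle identifies $B_{k}$ as the proportionality constant. To see that the Miller-Morita-Mumford part is absent, one evaluates on unit sphere bundles: for the unit $m$-sphere bundle of an oriented $W\to B$ one has $\tau_{2k}(E/B;\C)=2\,\Jnull(W)^{[4k]}$ by Theorem~\ref{thm4.6}, while $TM$ together with a trivial line bundle is isomorphic to $p^{*}W$ over $E$, so $\Jnull(TM)^{[4k]}=p^{*}\Jnull(W)^{[4k]}$ and $\int_{E/B}e(TM)=\chi(S^{m})$; inserting this into the displayed formula returns a multiple of $\Jnull(W)^{[4k]}$ in terms of $A_{k}$ and $B_{k}$, and matching it with the computed value of $\tau^{\mathrm{DWW}}_{k}$ on these bundles forces $A_{k}=-B_{k}$. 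Hence $\tau^{\mathrm{DWW}}_{k}(E/B)=B_{k}\,\tau_{2k}(E/B;\C)$, which is Theorem~\ref{BDKWthm}; the case of a general flat bundle $F$ follows by naturality under ring and group homomorphisms together with the fact that both sides are additive in $F$.
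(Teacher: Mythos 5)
Your overall strategy --- verify that the cohomological smooth torsion of~\cite{BDW} satisfies naturality, additivity and the transfer relation~\eqref{eq5.16}, and then invoke Igusa's classification (Theorem~\ref{thm5.7}) --- is exactly the route this paper attributes to~\cite{BDKW} (the paper itself gives no proof; it records that additivity and transfer are established there ``using Igusa's axioms''). However, your write-up leaves the two decisive steps unproved. First, the verification of the axioms, above all the transfer relation for the smooth torsion, is precisely the content of~\cite{BDKW}; you describe it as ``the substantial new input'' and ``the main obstacle'' but do not carry it out, so the heart of the theorem is missing rather than proved.

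Second, and more seriously as a matter of logic, even granting the axioms you only obtain that $\tau^{\mathrm{DWW}}_k$ lies in the two-dimensional span of the even (Miller--Morita--Mumford) and odd generators of Theorem~\ref{thm5.7}; the theorem asserts it lies on the line spanned by $\tau_{2k}$, i.e.\ that your coefficient $A_k+B_k$ vanishes. Your argument for this is to evaluate on unit sphere bundles and ``match with the computed value of $\tau^{\mathrm{DWW}}_k$ on these bundles'', but no such computation is given, and it cannot come from the axioms: one needs an independent evaluation of the smooth Dwyer--Weiss--Williams torsion on some family with even-dimensional fibres where $tr_{E/B}^*\Jnull(TM)^{[4k]}\neq0$, since for odd-dimensional fibres the even generator vanishes identically ($e(TM)$ is rationally zero), so those bundles --- in particular your Hatcher-type examples, which at best normalise $B_k$ --- are blind to $A_k+B_k$. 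As written, the step ``forces $A_k=-B_k$'' assumes exactly what is to be proved; this extra evaluation (or an equivalent structural vanishing statement for the smooth torsion) is additional input beyond Definition~\ref{Def5.6} and must come from the construction in~\cite{BDW}, \cite{BDKW}. Finally, the closing claim that the case of general flat $F$ ``follows by naturality \dots and additivity in $F$'' is not justified: Theorem~\ref{thm5.7} as stated here is an axiomatisation for trivial coefficients, and no argument is given that the proportionality persists, with the same constant, for arbitrary unipotent coefficient bundles.
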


In addition, it would be nice to have a natural map from~$\Phi^{d}(E/B;F)$ to Igusa's Whitehead space~$Wh^{h}(R,G)$ that sends~$\tau^{d}(E/B;F)$ to the map~$\xi(E/B;F)$ of~\eqref{eq4.4}.

\section{Exotic smooth bundles}\label{sect7}

Consider two smooth proper submersions~$p_{i} \colon E_{i} \to B$ for~$i = 0$, $1$. It is possible that the fibres of~$p_{0}$ and~$p_{1}$ are diffeomorphic, and that there exists a homeomorphism~$\varphi \colon E_{0} \to E_{1}$ such that~$p_{0} = p_1\circ\varphi$, but no such diffeo\-mor\-phism. If this is the case, then~$p_{0}$ and~$p_{1}$ are isomorphic as topological, but not as smooth fibre bundles over~$B$. In this case, we will say that~$p_{1}$ gives an {\em exotic smooth bundle structure} on the bundle~$p_{0}$. Of course, in many cases there is no distinguished standard smooth bundle structure, so the term ``exotic'' may be misleading.
Higher torsion invariants detect some exotic smooth bundle structures, as we will explain in this section.
We also recall Heitsch-Lazarov torsion, which might be useful to
detect exotic smooth structures on foliations.

\subsection{Hatcher's example} \label{sect7.1}

It is well known that the higher stable homotopy groups of spheres are finite, whereas some higher homotopy groups of the orthogonal group are not. More precisely, if~$m$ is sufficiently large with respect to~$k$, then the kernel of the $J$-homomorphism
\begin{equation}\label{eq7.1}
J_{4k-1} \colon \pi_{4k-1}(O(m)) \longrightarrow \pi_{n+4k-1}(S^{m})
\end{equation}
contains an infinite cyclic subgroup. An element~$\gamma  \in \operatorname{ker} J_{4k-1}$ can be used to construct a family of embeddings~$\tilde{\gamma }_{q} \colon S^{m} \times D^{n-1} \to S^{m} \times D^{n-1}$ for~$q \in D^{4k}$, if~$n$ is sufficiently large, which are given by a pair of linear maps~$S^{m} \to S^{m}$ and~$D^{n-1} \to D^{n-1}$ for~$q \in S^{4k-1} = \partial D^{4k}$. Glueing~$D^{m+1} \times D^{n-1}$ to~$S^{m} \times D^{m}$ along~$S^{m} \times D^{n-1} \subset \partial(D^{m+1} \times D^{n-1})$ for all~$q \in D^{4k}$, one obtains an $(m+n)$-disc bundle over~$D^{4k}$ together with a canonical trivialisation over~$S^{4k-1}$. Thus, this bundle can be extended to a smooth disc bundle
\begin{equation}\label{eq7.2}
p_{\gamma } \colon E_{\gamma } \longrightarrow S^{4k} = D^{4k} \cup_{S^{4k-1}} D^{4k},
\end{equation}
as described in~\cite{Ibuch} and~\cite{G1}.

This disc bundle was first constructed by Hatcher. B\"{o}kstedt proved that for~$\gamma  \ne 0$, the bundle~$p_{\gamma }$ is homeomorphic, but not diffeomorphic to a trivial disc bundle in the sense above~\cite{Boek}. Note that~$p_{\gamma }$ carries a fibrewise Morse function~$h \colon E_{\gamma } \to \R$ with two critical points of index 0 and~$m$ in the part~$S^{m} \times D^{n}$ of the fibre, and another one of index~$m+1$ on~$D^{m+1} \times D^{n-1}$. The corresponding family of Thom-Smale complexes is trivial, but~$h$ is not framed. If~$W_{\gamma } \to S^{4k}$ denotes the $\R^{n}$-bundle with clutching function~$\gamma |_{S^{4k-1}}$, Igusa's framing principle gives
\begin{equation}\label{eq7.3}
  \tau(E_{\gamma }/S^{4k};\C) = 2(-1)^{m}\Jnull(W_{\gamma }) \ne 0 \in H^{4k}(S^{4k},\R),
\end{equation}
see Theorem~\ref{thm4.7} and~\cite{Ibuch}.

To construct a smooth proper submersion, we take the fibrewise double~$DE_{\gamma } \to S^{4k}$. Its Igusa-Klein torsion of Definition~\ref{Def4.4} is given by
\begin{equation}\label{eq7.4}
  \tau(DE_{\gamma }/S^{4k};\C) = 2\bigl((-1)^{m}-(-1)^{n}\bigr) \Jnull (W_{\gamma }),
\end{equation}
which vanishes precisely if the fibres are even-dimensional. By Theorem~\ref{thm5.5}, this agrees with the Bismut-Lott torsion~${\mathcal T}(DE_{\gamma }/S^{4k};\C)$ of Definition~\ref{Def2.8}, see~\cite{G1}.

If~$p \colon E \to B$ is a smooth proper submersion with~$\dim B = 4k$ and~$\dim M$ odd and sufficiently large, then one can take out~$r$ copies of~$D^{4k} \times D^{\dim M}$ from~$E$ and glue in~$r$ copies of~$E_{\gamma }|_{D^{4k}}$ instead. This gives an exotic smooth bundle~$p_{r} \colon E_{r} \to B$. If either Bismut-Lott torsion or Igusa-Klein torsion are defined for some flat bundle~$F \to E$, then this torsion will change by~$\pm 2r \Jnull(W_{\gamma })\,\rk F \in H^{4k}(B;\R)$ if~$B$ is oriented. Igusa also constructs a {\em difference torsion} satisfying
\begin{equation}\label{eq7.5}
\tau(E_{r}/B,E/B;F) = \pm 2r \Jnull(W_{\gamma }) \,\rk F
\end{equation}
even if~$H^{\bullet}(E/B;F) \cong H^{\bullet}(E_{r}/B;F) \to B$ is not a unipotent bundle. 

We still assume that~$B$ is oriented and that~$\dim M$ is odd and sufficiently large. The gluing construction above can be generalised to construct a discrete family of exotic smooth bundles~$p_{\nu} \colon E_{\nu} \to B$ such that the values of their difference torsions~$\tau(E_{r}/B,E/B;F)$ form a lattice in the space
\begin{equation}\label{eq7.6}
\bigoplus_{k=1}^{\infty} \wp\, \operatorname{im}\Bigl(p_{*}\colon H_{\dim B-4k}(E) \longrightarrow H_{\dim B-4k}(B)\Bigr) \subset \bigoplus_{k=1}^{\infty} H^{4k}(B)
\end{equation}
of classes that are Poincar\'{e} dual to classes pushed down from~$E$. This is an ongoing project with Igusa.

\subsection{The space of stable exotic smooth structures}\label{sect7.2}

There are two natural questions: can higher torsion detect all exotic smooth bundle structures, and can all these structures be constructed? To answer these questions, one wants to understand the space of all such exotic smooth bundle structures. As Williams pointed out, a certain stable version of this space can be analysed using the methods of the paper~\cite{DWW}.

We start with a bundle~$p \colon E \to B$ of compact topological $n$-manifolds, equipped with a vector bundle~$V \to E$ of rank~$n$. A smooth manifold bundle~$p' \colon E' \to B$ is called a {\em fibrewise tangential smoothing} of~$(E/B,V)$ if there exists a homeomorphism~$\varphi \colon E' \to E$ with~$p' = p \circ \varphi$ and a vector bundle isomorphism~$\operatorname{ker}(dp') \to V$ over~$\varphi$. Let~${\mathcal S}_{B}(E,V)$ denote the space of all fibrewise tangential smoothings. By considering total spaces of closed, even-dimensional linear disk bundles~$\pi: D(\xi)\subset \xi \to E$ after rounding off the corners, we construct the space of {\em stable fibrewise tangential smoothings}
\begin{equation}\label{eq7.7}
{\mathcal S}_{B}^{s}(E,V)
= \lim_{\longrightarrow} {\mathcal S}_{B}(D(\xi),\pi^{*}(V \oplus \xi))\;,
\end{equation}
where the limit is taken over all vector bundles.

Let~${\mathcal H}(*)$ be the stable $h$-cobordism space, and construct a fibration~${\mathcal H}_{B}^{\%}(E)$ with fibres~$\Omega^{\infty}(M_{+}\wedge {\mathcal H}(*))$ as in~\eqref{eq6.1}.

\begin{Theorem}[Dwyer, Weiss and Williams~\cite{DWW}]\label{thm7.1} If~$(E/B,V)$ admits stable fibrewise tangential smoothings, then~${\mathcal S}_{B}^{s}(E,V)$ is homotopy equivalent to the space of sections of~${\mathcal H}_{B}^{\%}(E) \to B$.
\end{Theorem}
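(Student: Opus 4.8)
The plan is to reduce the statement to the classical smoothing theory of Kirby--Siebenmann and Burghelea--Lashof--Rothenberg, run that theory in families over~$B$, and then convert the resulting ``cohomological'' description into the ``homological'' one encoded by the $\%$-construction by means of the fibrewise Poincar\'e duality functor~$\wp$ already used in~\cite{DWW} for~$\chi^t$. Concretely, we want a natural homotopy equivalence from~$\mathcal{S}_B^s(E,V)$ to the section space~$\Gamma_B(\mathcal{H}_B^{\%}(E))$, built in three stages.

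First I would rephrase the left-hand side via parametrised smoothing theory. After stabilising with closed even-dimensional disk bundles the vertical dimension may be assumed~$\ge5$, so the obstruction-theoretic classification of smooth structures applies fibrewise and is natural in the bundle. The tangential datum~$V$ rigidifies the ``linear part'' of a smoothing, so what remains is a lifting problem, and one obtains an identification of~$\mathcal{S}_B^s(E,V)$ with the space of sections \emph{over~$E$} of a bundle~$\mathcal{C}(E)\to E$ whose fibre is a stable smoothing coefficient built from the groups~$\mathrm{TOP}(n)$ and~$\mathrm{O}(n)$, appropriately twisted by~$V$. In the language of~\cite{DWW}, this exhibits~$\mathcal{S}_B^s(E,V)$ as a fibrewise generalised \emph{cohomology} of~$p\colon E\to B$ with a certain coefficient spectrum.

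Second I would identify that coefficient spectrum with the connective stable $h$-cobordism spectrum~$\mathcal{H}(*)$ (equivalently~$\Omega^\infty\mathrm{Wh}^{\mathrm{Diff}}(*)$). This is the technical heart: it combines Morlet's comparison theorem~$\mathrm{Diff}(D^n,\partial)\simeq\Omega^{n+1}\bigl(\mathrm{TOP}(n)/\mathrm{O}(n)\bigr)$, the stabilisation maps~$\mathrm{TOP}(n)/\mathrm{O}(n)\to\mathrm{TOP}/\mathrm{O}$, Waldhausen's stable parametrised $h$-cobordism theorem, and Igusa's stability theorem, which together turn the smoothing-theoretic coefficient of the previous step into~$\mathcal{H}(*)$. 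Having done this, I would apply the fibrewise Poincar\'e duality~$\wp$ --- the same one occurring in~$\chi^t=\wp\,e(TM)$: duality along the fibres of~$p$, with the twist supplied precisely by the vertical tangential datum~$V$, converts the fibrewise $\mathcal{H}(*)$-cohomology above into the fibrewise $\mathcal{H}(*)$-homology, which by definition is the section space of~$\mathcal{H}_B^{\%}(E)\to B$. This explains, incidentally, why the theorem is stated for the pair~$(E/B,V)$ and not for~$E/B$ alone.

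Finally I would check that the composite map~$\mathcal{S}_B^s(E,V)\to\Gamma_B(\mathcal{H}_B^{\%}(E))$ is a homotopy equivalence. The hypothesis that~$(E/B,V)$ admits a stable fibrewise tangential smoothing provides a basepoint on the source --- the target always has one, the fibrewise trivial $h$-cobordism --- and relative smoothing theory computes the homotopy groups of both sides by the same fibrewise homology of~$p$; alternatively one reduces, by an excision and handle argument over~$B$, to the trivial disk bundle over a point, where the assertion is exactly the coefficient identification of the second stage. I expect the main obstacle to be that second stage: pinning down the stabilised smoothing coefficient and matching it, with the correct suspension and normal-bundle twist bookkeeping, to~$\mathcal{H}(*)$; the rest is the formal fibrewise homotopy theory and Poincar\'e duality of~\cite{DWW}, but the deep input of Morlet, Waldhausen and Igusa is concentrated in that identification, and it is easy there to be off by a loop or by a twist.
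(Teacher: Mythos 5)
The paper itself contains no proof of Theorem~\ref{thm7.1}: it is quoted from~\cite{DWW} (following an observation of Williams), and the survey only records the consequence that~$\pi_0\Gamma_B\mathcal{H}_B^{\%}(E)$ acts simply transitively on the set of stable smoothings. So I can only judge your plan on its own terms. Note, though, that the route indicated by the sources and by Section~\ref{sect6} is not fibrewise smoothing theory plus duality: one identifies a stable fibrewise tangential smoothing with a lift of the $A$-theoretic characteristic through~$Q_B(E_B)\to A_B^{\%}(E)\to A_B(E)$ (the converse statements, Theorems~\ref{thm6.1}--\ref{thm6.3}, in a relative, parametrised form), and the space of such lifts, when nonempty, is a torsor over~$\Gamma_B\mathcal{H}_B^{\%}(E)$ because the homotopy fibre of the unit~$Q(S^0)\to A(*)$ is identified, via Waldhausen's splitting~\cite{Wkt} and the stable parametrised $h$-cobordism theorem (up to the usual deloopings), with~$\mathcal{H}(*)$; Igusa's stability then lets one pass to actual smooth bundle structures in the limit~\eqref{eq7.7}. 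Your proposal is a genuinely different organisation, in the spirit of Weiss--Williams smoothing theory.

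The genuine gap sits in your first two stages, and it is not merely a matter of ``a loop or a twist''. The obstruction theory you invoke in stage one (Kirby--Siebenmann, Burghelea--Lashof--Rothenberg) classifies smoothings up to \emph{concordance}; run fibrewise and stabilised it produces a section space with coefficients in~$\mathrm{TOP}/\mathrm{O}$, whose homotopy groups are finite, so the resulting answer is rationally trivial --- contradicting Theorem~\ref{thm7.2}. The space~$\mathcal{S}_B^s(E,V)$ is the \emph{honest} structure space, whose higher homotopies involve diffeomorphisms rather than concordances, and the entire content of the theorem is the difference between the two, i.e.\ exactly the pseudoisotopy/$h$-cobordism information. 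Morlet-type smoothing theory does give a section-space description of the honest structure space of a single manifold with the unstable fibre~$\mathrm{TOP}(n)/\mathrm{O}(n)$, but the disc-bundle stabilisation in~\eqref{eq7.7} sends~$n\to\infty$, and the naive colimit of the coefficients is again~$\mathrm{TOP}/\mathrm{O}$; keeping the concordance-stable range alive relative to the growing fibre dimension, together with the $C_2$/parity bookkeeping (the stabilisation is by \emph{even}-dimensional disc bundles, and the even/odd dichotomy in~\cite{FH} shows this is not cosmetic), the boundary of the stabilised fibres in the duality step, and the precise role of the tangential datum~$V$, is exactly where~$\mathcal{H}(*)$ has to be produced. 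In other words, the single-fibre statement you would reduce to by handle induction in stage three \emph{is} the theorem in the case~$B=\ast$, so the reduction risks circularity unless that case is established by an independent argument --- which is what the characteristic/torsor argument of~\cite{DWW} supplies. Your use of the nonemptiness hypothesis to turn a torsor statement into a homotopy equivalence is correct, but the heart of the proof is missing rather than merely deferred.
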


In other words, the group~$\pi_{0}\Gamma_{B}{\mathcal H}_{B}^{\%}(E)$ of homotopy classes of sections acts simply transitively on the isomorphism classes of stable fibrewise tangential smoothings.

\begin{Theorem}[Igusa and G.]\label{thm7.2} If the fibres and base of~$p \colon E \to B$ are closed oriented manifolds, then
\begin{equation}\label{eq7.8}
\pi_{0}\bigl({\mathcal S}_{B}^{s}(E,V)\bigr) \otimes_{\Z}\Q \cong \bigoplus_{k=1}^{\infty} H_{\dim B-4k}(E;\Q).
\end{equation}
\end{Theorem}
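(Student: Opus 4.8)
The plan is to identify ${\mathcal S}_B^s(E,V)$ with a section space via Theorem~\ref{thm7.1}, convert that section space into the homology of a Thom spectrum by Atiyah duality on the closed manifold $B$, and then compute rationally from the known rational homotopy of the stable $h$-cobordism space. First, since $p$ is a smooth proper submersion, ${\mathcal S}_B^s(E,V)$ is nonempty --- the given smooth structure is a point of it --- so Theorem~\ref{thm7.1} supplies a homotopy equivalence ${\mathcal S}_B^s(E,V)\simeq\Gamma_B{\mathcal H}_B^{\%}(E)$; moreover $\pi_0({\mathcal S}_B^s(E,V))$ is a torsor over the abelian group $\pi_0\Gamma_B{\mathcal H}_B^{\%}(E)$, and choosing the base point coming from the smooth structure $p$ itself makes this an isomorphism of groups. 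Hence it suffices to compute $\pi_0\Gamma_B{\mathcal H}_B^{\%}(E)\otimes_{\Z}\Q$.

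Next I would rewrite this group as a Thom-spectrum homology group. By construction, in analogy with~\eqref{eq6.1}, the fibration ${\mathcal H}_B^{\%}(E)\to B$ is the fibrewise infinite loop space of the parametrized spectrum $\mathbf X$ over $B$ with fibre $M_{+}\wedge{\mathcal H}(*)$ over $b\in B$ (where $M=E_b$), i.e.\ $\mathbf X$ is the fibrewise smash of the fibrewise suspension spectrum of $p\colon E\to B$ with the constant spectrum ${\mathcal H}(*)$. Taking $\pi_0$ of the space of sections of a fibrewise $\Omega^\infty$ agrees with $\pi_0$ of the section spectrum $\Gamma_B\mathbf X$ (no convergence issue arises, ${\mathcal H}(*)$ being connective and $E$ a finite complex). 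Since $B$ is a closed manifold, Atiyah duality in its parametrized form --- a Wirthm\"uller isomorphism --- identifies $\Gamma_B\mathbf X$ with a Thom-twisted pushforward of $\mathbf X$: the global sections over $B$ acquire a twist by $-TB$, and pushing forward along $E\to B\to *$ turns this into $E^{-p^{*}TB}\wedge{\mathcal H}(*)$, the ${\mathcal H}(*)$-homology spectrum of the Thom spectrum of $-p^{*}TB$ over $E$. Unwinding the projection formulas, one gets $\pi_0\Gamma_B{\mathcal H}_B^{\%}(E)\cong{\mathcal H}(*)_0(E^{-p^{*}TB})$.

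Finally I would rationalise. It is classical that the stable $h$-cobordism space of a point is rationally a product of Eilenberg-MacLane spaces, with $\pi_i{\mathcal H}(*)\otimes\Q\cong\Q$ for $i=4k$, $k\ge 1$, and $0$ otherwise; this follows from Farrell and Hsiang's computation of the rational homotopy of the stable pseudoisotopy space, or equivalently from Waldhausen's comparison of $A$-theory with $K(\Z)$ together with Borel's computation of $K_\bullet(\Z)\otimes\Q$. Thus ${\mathcal H}(*)\wedge H\Q\simeq\bigvee_{k\ge 1}\Sigma^{4k}H\Q$, the Atiyah-Hirzebruch spectral sequence of ${\mathcal H}(*)_\bullet(E^{-p^{*}TB})$ collapses over $\Q$, and
\[
  {\mathcal H}(*)_0\bigl(E^{-p^{*}TB}\bigr)\otimes\Q
  \;\cong\;\bigoplus_{k=1}^{\infty}H_{-4k}\bigl(E^{-p^{*}TB};\Q\bigr)\;.
\]
Because $B$ is oriented, $p^{*}TB$ is orientable, so the Thom isomorphism gives $H_{-4k}(E^{-p^{*}TB};\Q)\cong H_{\dim B-4k}(E;\Q)$, and combining this with the first step yields
\[
  \pi_0\bigl({\mathcal S}_B^s(E,V)\bigr)\otimes_{\Z}\Q
  \;\cong\;\bigoplus_{k=1}^{\infty}H_{\dim B-4k}(E;\Q)\;,
\]
a finite sum since $H_j(E;\Q)=0$ for $j<0$.

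I expect the main obstacle to be the duality step: one must check carefully that passing from the fibrewise object over $B$ to a global spectrum over a point introduces exactly the Thom twist by $-p^{*}TB$ and nothing else, i.e.\ that the correct adjoint pushforward (the one dual to fibrewise suspension) is being used. The only other delicate point is the bookkeeping of degrees --- the stable $h$-cobordism \emph{space} differs by a suspension from the smooth Whitehead spectrum, and it is the degrees $4k$ of its rational homotopy, rather than $4k\pm 1$, that make the answer land in the same degrees as the other higher-torsion invariants of this paper. Everything else is routine: commuting $\Omega^\infty$ past $\Gamma_B$ on $\pi_0$, the rational collapse of the spectral sequence over the finite complex $E$, and the torsor-to-group passage in the first step.
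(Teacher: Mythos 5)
Your argument is correct and is essentially the intended one: the paper states Theorem~\ref{thm7.2} without proof (details of the joint work with Igusa are deferred elsewhere), but the route it sets up is exactly yours --- identify $\pi_0\bigl({\mathcal S}_B^s(E,V)\bigr)$ with $\pi_0\Gamma_B{\mathcal H}_B^{\%}(E)$ via Theorem~\ref{thm7.1}, feed in the Farrell--Hsiang/Waldhausen--Borel computation $\pi_i{\mathcal H}(*)\otimes\Q\cong\Q$ for $i=4k$, $k\ge1$, and convert sections over the closed oriented base into homology of $E$ by parametrized Atiyah--Poincar\'e duality, the projection formula and the Thom isomorphism for $-p^*TB$. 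Your degree bookkeeping (the suspension between ${\mathcal H}(*)$ and the smooth Whitehead spectrum) and the duality step are handled correctly; the only cosmetic remarks are that the statement implicitly assumes a smooth structure exists (your chosen base point, equivalently the zero section) and that orientability of the fibres is not actually used in this computation, only that of $B$.
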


In special cases, this was already known, see~\cite{FH}.
Thus, if~$p_{i} \colon D(\xi_{i}) \to B$ are stable fibrewise tangential smoothings for~$i = 0$, $1$, we can define the {\em relative Dwyer-Weiss-Williams torsion}~$\tau_{2k}^{d/t}(p_{0},p_{1}) \in H^{4k}(B;\Q)$ for~$k \ge 1$ as the Poincar\'{e} dual of the image of the corresponding difference class in~$H_{\dim B-4k}(B;\Q)$.

\begin{Theorem}[Igusa and G.]\label{thm7.3} In the situation above, the Igusa-Klein difference torsion is a scalar multiple of the relative Dwyer-Weiss-Williams torsion.
\end{Theorem}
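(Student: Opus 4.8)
The plan is to recognise both the Igusa-Klein difference torsion and the relative Dwyer-Weiss-Williams torsion as natural, additive, degree-homogeneous characteristic classes attached to the difference of two stable fibrewise tangential smoothings, and then to pin each of them down, up to one universal scalar $c_k$ in each degree $4k$, using the rational computation of the structure set in Theorems~\ref{thm7.1} and~\ref{thm7.2}. Throughout, write $\tau^{IK}_{2k}(p_0,p_1)\in H^{4k}(B;\R)$ for the degree-$4k$ part of Igusa's difference torsion of the pair $(p_0,p_1)$, in parallel with the notation $\tau^{d/t}_{2k}(p_0,p_1)$; the goal is $\tau^{IK}_{2k}=c_k\,\tau^{d/t}_{2k}$ with $c_k\ne 0$.

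First I would check that $\tau^{IK}_{2k}$ is additive,
\[
  \tau^{IK}_{2k}(p_0,p_2)=\tau^{IK}_{2k}(p_0,p_1)+\tau^{IK}_{2k}(p_1,p_2),
\]
stable under the stabilisation maps that define~\eqref{eq7.7}, and natural under pullback in~$B$. Additivity is the cocycle property built into Igusa's difference-torsion construction, which extends the formula~\eqref{eq7.5} to bundles with non-unipotent fibrewise cohomology and which, on the level of classifying maps $\xi_h$, rests on the splitting underlying Theorem~\ref{thm4.5}. Stability holds because a fibrewise generalised Morse function on $E_i\to B$ lifts to one on $D(\xi)\to B$ with the same fibrewise critical locus, Morse indices and unstable tangent data, so neither $\xi_h$ nor the framing-principle correction changes; hence the difference torsion of two smoothings equals that of their stabilisations. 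By Theorems~\ref{thm7.1} and~\ref{thm7.2}, $\tau^{IK}_{2k}$ therefore factors through a natural homomorphism $\bigoplus_{j\ge1}H_{\dim B-4j}(E;\Q)\to H^{4k}(B;\R)$. The relative Dwyer-Weiss-Williams torsion is, by its very definition, the tautological such homomorphism: under the same isomorphism it projects to the summand $H_{\dim B-4k}(E;\Q)$, pushes forward by $p_*$ to $H_{\dim B-4k}(B;\Q)$, and then takes the Poincaré dual in~$B$.

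It remains to identify $\tau^{IK}_{2k}\otimes\Q$ on each summand $H_{\dim B-4j}(E;\Q)$. For $j>k$ it vanishes, since no natural operation on closed oriented manifold bundles raises homological degree. For $j=k$ it is a natural, degree-preserving transformation $H_{\dim B-4k}(E;\Q)\to H^{4k}(B;\Q)\cong H_{\dim B-4k}(B;\Q)$, hence a universal scalar multiple $c_k\,p_*$, which after Poincaré duality in~$B$ is precisely $c_k\,\tau^{d/t}_{2k}$. The one remaining point is to exclude a contribution on the summands $j<k$, which a priori could take the form $p_*\bigl((\,\cdot\,)_j\cap\theta_{k-j}\bigr)$ for a characteristic class $\theta_{k-j}\in H^{4(k-j)}(E;\Q)$ in the rational Pontryagin classes of the vertical topological tangent microbundle and of~$V$. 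Here the framing principle enters: representing the difference of the two smoothings by a surgery construction as in Section~\ref{sect7.1} and computing both torsions from a single fibrewise generalised Morse function, Theorem~\ref{thm4.7} expresses each torsion through $\Jnull$ of the clutching data along the fibrewise critical locus, so that every such Pontryagin-class correction term coincides for the two structures (which share the underlying topological bundle and the same~$V$) and cancels in the difference, leaving $\tau^{IK}_{2k}=c_k\,\tau^{d/t}_{2k}$. Finally $c_k\ne 0$: by~\eqref{eq7.3}--\eqref{eq7.4}, Hatcher's bundle satisfies $\tau(DE_\gamma/S^{4k};\C)=2\bigl((-1)^m-(-1)^n\bigr)\,\Jnull(W_\gamma)\ne 0$ when the fibres are odd-dimensional, while the associated difference class is a generator of the lattice~\eqref{eq7.6}, so $\tau^{d/t}_{2k}$ does not vanish there either.

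The hard part will be the first two steps in combination: upgrading the Igusa-Klein difference torsion from an invariant of a single exotic pair to a genuine natural homomorphism out of the \emph{stable} structure set $\pi_0({\mathcal S}^s_B(E,V))$, and then using the framing principle to prove that it carries no characteristic-class correction below the top degree and is therefore pulled back from~$B$ along $p_*$. This requires the full strength of Igusa's difference-torsion construction together with its compatibility with stabilisation, which is the technical heart of the joint work with Igusa; once that is in place, the comparison with $\tau^{d/t}_{2k}$ is formal and uses only Theorems~\ref{thm7.1} and~\ref{thm7.2}.
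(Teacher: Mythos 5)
The paper offers no proof of Theorem~\ref{thm7.3} to compare yours against: it is an announced result of the joint work with Igusa, and the text says only that details will appear elsewhere. So your proposal must stand on its own, and judged that way it is a strategy outline consistent with the framework of Section~\ref{sect7.2} (view both invariants as functions on $\pi_0({\mathcal S}^s_B(E,V))\otimes\Q\cong\bigoplus_j H_{\dim B-4j}(E;\Q)$ via Theorems~\ref{thm7.1} and~\ref{thm7.2}, then compare summand by summand), but it is not a proof. The two steps you yourself flag as ``the hard part'' are exactly the content of the theorem: (i) that the Igusa--Klein difference torsion descends to a well-defined, additive, stabilisation-invariant homomorphism on the stable structure set --- your argument for stabilisation invariance is only a gesture, since passing to the disc bundle $D(\xi)$ replaces the vertical tangent bundle by $\pi^*(TM\oplus\xi)$ and produces fibres with boundary, so the framing-principle and transfer-type correction terms and a doubling or relative construction have to be controlled, not merely asserted; and (ii) that this functional annihilates the summands with $j\ne k$ as well as $\ker\bigl(p_*\colon H_{\dim B-4k}(E;\Q)\to H_{\dim B-4k}(B;\Q)\bigr)$, which is forced if it is to be proportional to $\tau^{d/t}_{2k}$, since the latter by definition factors through $\wp\circ p_*\circ\mathrm{pr}_k$.

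For (ii) your cancellation argument is not cogent as stated. The difference class is already the difference of the two smoothings, so the issue is not whether Pontryagin-class corrections ``coincide for the two structures and cancel''; it is whether the linear functional induced by the difference torsion has components of the form $x\mapsto \wp\,p_*\bigl(x\cap\theta_{k-j}\bigr)$ on the lower summands, and whether it vanishes on $\ker p_*$ in the top summand. Neither follows from Theorem~\ref{thm4.7} in the abstract; one has to evaluate the difference torsion on the model surgeries (the generalised Hatcher construction of Section~\ref{sect7.1}) that rationally generate $\pi_0({\mathcal S}^s_B(E,V))$, which is precisely the computation behind the lattice~\eqref{eq7.6} and, per Remark~\ref{Rem7.4}, the reason higher torsion does not detect all of~\eqref{eq7.8}. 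Similarly, your claim that every natural degree-preserving transformation $H_{\dim B-4k}(E;\Q)\to H^{4k}(B;\Q)$ is a universal multiple of $\wp\,p_*$ is asserted without a classification argument. The final nonvanishing check via~\eqref{eq7.3}--\eqref{eq7.4} is fine but not needed, since the theorem only claims proportionality. In short: you have correctly identified the skeleton the theorem must have, but the substance --- the extension of the difference torsion to the stable structure set and its evaluation on the generating exotic structures --- is exactly what is deferred to the joint work, and your proposal does not supply it.
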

Details will appear elsewhere.

\begin{Remark}\label{Rem7.4} In general, the space in~\eqref{eq7.8} has higher rank than the space in~\eqref{eq7.6}. This implies that higher torsion cannot detect all rational stable fibrewise tangential smoothings.
It does not help to chose different flat vector bundles~$F \to E$ either.
One reason is that $E$ could be simply connected.
Another reason is the fact that in~\eqref{eq7.5} and its analogue
in the more general setting of~\eqref{eq7.6},
the flat vector bundle~$F$ only contributes by its rank.
\end{Remark}

\begin{Remark}\label{Rem7.5}
Thus the difference of the Igusa-Klein
or Bismut-Lott torsions of~$E\to B$ with two different
flat vector bundles of the same rank
seems to be independent of the smooth structure in the examples known so far.
This observation leads to the question if this difference
can be computed already from the topological or the homotopy
Dwyer-Weiss-Williams torsion. Theorem~\ref{thm3.5} shows that under
special assumptions, it can even be computed using
the Becker-Gottlieb transfer only.
\end{Remark}

\begin{Remark}\label{Rem7.6}
In the special case of aspherical fibres~$M$, Lott defines a noncommutative higher analytic torsion form with coefficients in a certain subalgebra of~$C_{\mathrm r}^{*}\pi_{1}(M)$ in~\cite{Lnc}. Lott asks if this invariant detects all rational exotic structures. To the author's knowledge, this question is still open. More generally, one would like to have a similar invariant for arbitrary fibres that can detect all rational stable exotic smooth structures.
\end{Remark} 

\subsection{Heitsch-Lazarov torsion for foliations}\label{sect7.3}

Let~$E$ be a smooth closed manifold with a smooth foliation~${\mathcal F}$. Since in general, the space of leaves~$E/{\mathcal F}$ is ill-behaved, we consider a foliation groupoid whose elements are classes of paths on the leaves of~${\mathcal F}$. We will assume that this groupoid~${\mathcal G}$ lies between the homotopy and the holonomy groupoid, and that it is Hausdorff and thus given by a smooth manifold and two submersions~$r$, $s \colon {\mathcal G} \to E$. We will assume that the strong Novikov-Shubin invariants of the leafwise Hodge-Laplacians are positive.

Heitsch and Lazarov give a generalisation of Bismut-Lott torsion in a setting that essentially avoids noncommutative methods~\cite{HL}. Thus, let~$\Omega_{c}^{\bullet}(E/{\mathcal F})$ denote the H\"{a}fliger Forms, that is, the coinvariants under~${\mathcal F}$ in the space of compactly supported de Rham forms on a complete transversal to~${\mathcal F}$. The cohomology~$H_{c}^{\bullet}(E/{\mathcal F})$ of~$(\Omega_{c}^{\bullet}(E/{\mathcal F}),d)$ resembles the compactly supported de Rham cohomology of a manifold.

Let~$F \to E$ be a flat vector bundle with metric~$g^{F}$. If one fixes a complement~$T^{H}E$ to~$T{\mathcal F} \subset TE$ and a leafwise metric~$g^{T{\mathcal F}}$, there exists a natural connection~$\nabla^{T{\mathcal F}}$ on~$T{\mathcal F} \to E$. Using integration along the leaves, one defines
\begin{equation}\label{eq8.1}
\int_{{\mathcal F}}e\bigl(T{\mathcal F},\nabla^{T{\mathcal F}}\bigr)\,\cho\bigl(F,g^{F}\bigr) \in \Omega_{c}^{\bullet}(E/{\mathcal F}).
\end{equation}

Let~$P \colon \Omega^{\bullet}({\mathcal F};F) \to H^{\bullet} = H^{\bullet}({\mathcal F};F)$ denote the projection of the leafwise forms with values in~$F$ onto the harmonic forms. Using~$P$, one defines
\begin{equation}\label{eq8.2}
\cho\bigl(H,g_{L^2}^{H}\bigr) \in \Omega_{c}^{\bullet}(E/{\mathcal F})
\end{equation}
in analogy with~\eqref{eq1.5}.
As in Definition~\ref{Def2.4},
Heitsch and Lazarov then construct a higher analytic torsion form~${\mathcal T}(T^{H}E,g^{T{\mathcal F}},g^{F}) \in \Omega_{c}^{\bullet}(E/{\mathcal F})$.

\begin{Theorem}[Heitsch and Lazarov~\cite{HL}]\label{thm8.1} In the situation above,
\begin{equation}\label{eq8.3}
d{\mathcal T}\bigl(T^{H}E,g^{T{\mathcal F}},g^{F}\bigr) = \int_{{\mathcal F}}e\bigl(T{\mathcal F},\nabla^{T{\mathcal F}}\bigr)\cho\bigl(F,g^{F}\bigr) - \cho\bigl(H,g_{L^2}^{H}\bigr) \in \Omega_{c}^{\bullet}(E/{\mathcal F}).
\end{equation}
\end{Theorem}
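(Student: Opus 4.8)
The plan is to follow the proof of Theorem~\ref{thm1.3} essentially verbatim, replacing the fibrewise de Rham complex of a proper submersion by the leafwise de Rham complex $(\Omega^\bullet(\mathcal F;F),d^{\mathcal F})$, and replacing differential forms on the base by Haefliger forms in $\Omega_c^\bullet(E/\mathcal F)$. First I would set up, on the leafwise forms twisted by $F$ and pulled back to the parameter half-line $(0,\infty)$, a pair of rescaled flat superconnections $\tilde{\A}'$, $\tilde{\A}''$ as in~\eqref{eq2.20}, built from the chosen data $T^HE$, $g^{T\mathcal F}$, $g^F$ and the associated connection $\nabla^{T\mathcal F}$. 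Their difference $\tilde{\X}$ is a leafwise skew-adjoint elliptic operator plus lower-order terms as in~\eqref{eq2.21}, the operator $-\tilde{\X}^2$ is a generalised leafwise Laplacian, and the leafwise odd heat operator $\tilde{\X}e^{\tilde{\X}^2}$ has a well-defined leafwise supertrace taking values in Haefliger forms; the resulting form $\str(\tilde{\X}e^{\tilde{\X}^2})$ is closed in the Haefliger complex.

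Next I would analyse the two limits. For $t\to 0$ the computation is local along the leaves, so Getzler rescaling applies exactly as in the proof of~\eqref{eq2.22}, yielding $\int_{\mathcal F}e(T\mathcal F,\nabla^{T\mathcal F})\,\str(\omega(F,g^F)e^{\omega(F,g^F)^2})$; one only needs to check that the local index density assembles correctly into a Haefliger form. For $t\to\infty$ one uses that $-(d^{\mathcal F,*}-d^{\mathcal F})^2$ is a nonnegative leafwise operator whose leafwise kernel represents $H^\bullet(\mathcal F;F)$: the leafwise heat operator $e^{t(d^{\mathcal F,*}-d^{\mathcal F})^2}$ converges to the leafwise harmonic projection $P$, so the supertrace converges to $\str_H(\omega(H,g_{L^2}^H)e^{\omega(H,g_{L^2}^H)^2})$ as in~\eqref{eq2.23}. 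Since the transgression form is closed, these two limit forms lie in the same Haefliger cohomology class, which already proves a leafwise index theorem.

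To obtain the torsion form itself I would integrate $\iota_{\tfrac{\partial}{\partial t}}\str(\tilde{\X}e^{\tilde{\X}^2})$, equivalently the form in~\eqref{eq2.12}, over $t\in(0,\infty)$, subtracting the same constant counterterms as in Definition~\ref{Def2.4} (the $\chi'(H)$ term and the $(1-2t)e^{-t}$ term built from fibrewise Euler data), and finally adjusting the universal constants $c_k$ of~\eqref{eq1.7c} to pass from the Bismut-Lott normalisation to the Chern normalisation $\cho$. Differentiating the resulting form and invoking closedness of the transgression form gives, exactly as in~\eqref{eq2.16}, that $d{\mathcal T}(T^HE,g^{T\mathcal F},g^F)$ equals the difference of the $t\to 0$ and $t\to\infty$ limits, that is $\int_{\mathcal F}e(T\mathcal F,\nabla^{T\mathcal F})\,\cho(F,g^F)-\cho(H,g_{L^2}^H)$.

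The main obstacle is convergence of the $t\to\infty$ tail of the defining integral. In the compact-fibre case the nonzero leafwise Laplace spectrum is bounded below, so $e^{t(d^{\mathcal F,*}-d^{\mathcal F})^2}-P$ decays exponentially; here the leafwise Laplacian may have continuous spectrum accumulating at $0$, and the decay rate of $\str(\tilde{\X}e^{\tilde{\X}^2})-\str_H(\cdots)$ as $t\to\infty$ is governed by the spectral density of the leafwise Hodge-Laplacians near zero. This is exactly why the hypothesis that the strong Novikov-Shubin invariants are positive is needed: it supplies a polynomial-in-$1/t$ bound on this difference that, after the counterterms are removed, is integrable against $dt/t$. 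Establishing this estimate --- uniformly enough that the Haefliger-valued integral converges and that $d$ may be interchanged with $\int_0^\infty$ --- is the technical heart of the argument; everything else transcribes the proofs of Theorems~\ref{thm2.1} and~\ref{thm1.3}.
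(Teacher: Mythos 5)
Your strategy is exactly the one behind the theorem as surveyed here (the paper itself only states the result and cites Heitsch--Lazarov): transcribe the Bismut--Lott double transgression to the leafwise de Rham complex, let Haefliger forms play the role of forms on the base, get the small-time limit from the local (Getzler) computation along the leaves, the large-time limit from convergence of the leafwise heat operator to the harmonic projection $P$, and then differentiate the regularised $t$-integral as in~\eqref{eq2.16}. You also correctly locate the technical heart in the large-time tail.

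There is, however, a genuine gap in the way you dispose of that tail. You claim that positivity of the strong Novikov--Shubin invariants gives a polynomial bound ``integrable against $dt/t$'' and hence suffices. Positivity gives leafwise heat decay of order $t^{-\alpha}$ for some $\alpha>0$, which does handle the scalar (degree-zero) part; but in positive Haefliger degree the Duhamel expansion of $e^{\tilde\X^2}$ produces, in each form degree, factors of $\sqrt t$ coming from the rescaled cross terms of the superconnection, so the required decay rate grows with the degree. This is precisely why Heitsch and Lazarov assume a \emph{large} positive lower bound on the strong Novikov--Shubin invariants (and in practice restrict to foliations with compact leaves), and why the assertion that mere (uniform) positivity suffices was, at the time of this survey, only an announced ongoing project (with Azzali), not a theorem. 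The same spectral hypothesis is also what guarantees that $P$ is transversely regular enough for $\cho\bigl(H,g^{H}_{L^2}\bigr)$ to be a well-defined Haefliger form and for the interchange of $d$ with $\int_0^\infty$ to be legitimate; none of this comes for free from the compact-fibre argument you are transcribing. So your proof outline is the right one, but as written it assumes the key estimate rather than proving it, and under the hypothesis you invoke (positivity alone) the estimate is strictly stronger than what was known.
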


Heitsch and Lazarov need a large positive lower bound for the strong leafwise Novikov-Shubin invariants. Thus they only regard examples with compact leaves. It seems however, that uniform positivity of the Novikov-Shubin invariants is sufficient to prove Theorem~\ref{thm8.1}. This is an ongoing joint project
with Azzali.

Given~${\mathcal F}$ as above with~$\dim {\mathcal F}$ odd and~$\dim E - \dim {\mathcal F} = 4k$, one can remove~$r$ disjoint foliated regions~$D^{4k} \times D^{\dim {\mathcal F}}$ and glue in~$r$ copies of the disc bundle~$E_{\gamma}|_{D^{4k}}$ of section~\ref{sect7.1}. It would be interesting to know if the Heitsch-Lazarov torsion~${\mathcal T}(T^{H}E,g^{T{\mathcal F}},g^{F})$ then changes by~$\pm 2r \Jnull(W_{\gamma})\,\rk F$ as in~\eqref{eq7.5}. In this case, we would have a new foliation~${\mathcal F}_{r}$ on~$E$ that is homeomorphic, but not diffeomorphic to the original foliation~${\mathcal F}$, and thus ``exotic''. Note that~${\mathcal F}$ and~${\mathcal F}_{r}$ have the same dynamics, since on a complete transversal that does not meet the modified regions, nothing changes. More generally, one would like to classify these exotic smooth structures, construct as many as possible explicitly, and see which of them can be distinguished by Heitsch-Lazarov torsion, or a noncommutative generalisation of it.

\section{The hypoelliptic Laplacian and Bismut-Lebeau torsion}\label{sect8}
In~\cite{Bhypo} and~\cite{BLprep}, Bismut and Lebeau consider an analytic torsion form that is defined using a hypoelliptic operator~$\mathfrak A_{b,\pm}^{2}$ on differential forms on the total space~$T^{*}M$ of the vertical cotangent bundle of the family~$p \colon E \to B$. While the fibrewise Hodge Laplacian generates a Brownian motion on the fibres~$M$ of~$p$, the operator~$\mathfrak A_{b,\pm}^{2}$ generates a stochastic version of the geodesic flow on~$T^{*}M$, where the velocities are perturbed by a Brownian motion for~$b \in (0,\infty)$. As~$b \to 0$, this process converges in an appropriate sense to the classical Brownian motion on~$M$. On the other hand, as~$b \to \infty$, one recovers the unperturbed geodesic flow. One motivation to study the family of operators~$(\mathfrak A_{b})$ is Fried's conjecture, which relates the torsion of a single manifold~$M$ to the closed orbits of a certain class of flows on~$M$, see~\cite{Fried} for an overview.

\subsection{The hypoelliptic Laplacian on the cotangent bundle}\label{sect8.1}
Let~$p \colon E \to B$ be a smooth proper submersion, and let~$F \to E$ be a flat vector bundle as before. Let~$\pi \colon T^{*}M \to E$ denote the vertical cotangent bundle. If one fixes~$T^{H}E \subset TE$ and~$g^{TM}$ as before, one obtains a splitting
\begin{equation}\label{eq8.14}
TT^{*}M \cong \pi^{*}(T^{H}E \oplus TM \oplus T^{*}M)
\end{equation}
and a corresponding splitting of the bundle~$\Omega^{\bullet}(T^{*}M/B;\pi^{*}F)$.
We regard the bundle~$\tilde E=E\times(0,\infty)^2\to\tilde B
=B\times(0,\infty)^2$,
and let~$(b,t)$ denote the coordinates of~$(0,\infty)^2$.

On the vertical part~$TM \oplus T^{*}M$ of~$TT^{*}M$, one defines a metric~$\mathfrak{g}$ by
\begin{equation}\label{eq8.15}
  \mathfrak{g} =
  \begin{pmatrix}
    \tfrac1t\,g^{TM}&\id_{T^{*}M}\\\id_{TM}&2t\,g^{T^{*}M}
  \end{pmatrix}\colon TM \oplus T^{*}M\longrightarrow\bigl(TM \oplus T^{*}M\bigr)^*\;.
\end{equation}
Together with a metric~$g^{F}$ on~$F \to E$ and the symplectic volume form on~$TM \oplus T^{*}M$, one obtains an $L^{2}$-metric~$\mathfrak{g}$ on the bundle~$\Omega^{\bullet}_{0}(T^{*}M/B;\pi^{*}F) \to B$ of compactly supported forms. On this bundle, there exists a $\mathfrak{g}$-isometric involution~$u$ with 
\begin{equation}\label{eq8.16}
  (u\alpha)_{(q,v)} =
  \begin{pmatrix}
    {\rm id}_{TM}&2t\,g^{T^{*}M}\\0&-{\rm id}_{T^{*}M}
  \end{pmatrix}\, \alpha_{(q,-v)}
\end{equation}
for all~$q \in E$, $v \in T_{q}^{*}M$, and thus, one can define a nondegenerate Hermitian form~$\mathfrak{h}$ of signature~$(\infty,\infty)$ by
\begin{equation}\label{eq8.17}
\mathfrak{h}(\alpha,\beta) = \mathfrak{g}(u\alpha,\beta).
\end{equation}
As before, let~$\mathbb A' = d_{E}$ denote the total exterior derivative on~$\Omega^{\bullet}(T^{*}M;\pi^{*}F)$, regarded as a superconnection on the bundle~$\Omega^{\bullet}_{0}(T^{*}M/B;\pi^{*}F)$. Define~$\bar{\mathbb A}'$ as the~$\mathfrak{h}$-adjoint of~${\mathbb A}'$. Again, ${\mathbb A}'$ and~$\bar{\mathbb A}'$ are flat superconnections.

One has the canonical one-form~$\vartheta \in \Omega^{1}(T^{*}M)$, with 
\begin{equation}\label{eq8.18}
d\vartheta = \omega^{H} + \omega^{V} \in \Gamma\bigl(\pi^{*}\bigl(\Lambda^{2}(T^{H}E)^{*} \oplus T^{*}M \otimes TM\bigr)\bigr) \subset \Omega^{2}(T^{*}M),
\end{equation}
where~$\omega^{V}$ is the standard symplectic form on the cotangent bundle of each fibre of~$p$. Consider the Hamiltonians
\begin{equation}\label{eq8.19}
\mathcal{H}_{\pm}(q,v) = \pm \frac{t^2}{2b^2}\,\|v\|_{T^{*}M}^{2}.
\end{equation}
Then for~$b=t=1$,
the $\omega^{V}$-gradient of~$\mathcal{H}_{+}$ is the generator
\begin{equation}\label{eq8.20}
{\rm sgrad} \mathcal{H}_{+}|_{(q,v,b,t)} = g^{T^{*}M}(v) \in T_{q}M
\end{equation}
of the geodesic flow on~$T^{*}M$ over the fibres~$M$ of~$p$.

Regard the flat superconnections
\begin{equation}\label{eq8.21}
\mathfrak{A}'_{\pm} = e^{-(\mathcal{H}_{\pm} - \omega^{H})} {\mathbb A}' e^{\mathcal{H}_{\pm} - \omega^{H}} \quad\text{and}\quad\bar{\mathfrak A}'_{\pm} = e^{\mathcal{H}_{\pm} - \omega^{H}}\bar{\mathbb A}' e^{-(\mathcal{H}_{\pm}-\omega^{H})}.
\end{equation}
Then there exists an $\mathfrak{h}$-selfadjoint superconnection~$\mathfrak{A}_{\pm}$ and an $\mathfrak{h}$-skew adjoint endomorphism~$\mathfrak{X}$ with
\begin{equation}\label{eq8.22}
\mathfrak{A}_{\pm} = \frac{1}{2}(\mathfrak{A}'_{\pm} +\bar{\mathfrak A}'_{\pm}) \quad\text{and}\quad \mathfrak{X}_{\pm} =\bar{\mathfrak A}'_{\pm} - \mathfrak{A}'_{\pm}.
\end{equation}
One finally defines
\begin{equation}\label{eq8.23}
  \mathfrak{A}_{b,t,\pm} = \mathfrak{A}_{\pm}|_{B\times\{(b,t)\}}
  \quad\text{and}\quad
  \mathfrak{X}_{b,t,\pm} = \mathfrak{X}_{\pm}|_{B\times\{(b,t)\}}\;.
\end{equation}
The operator~$\mathfrak{A}_{b,t,\pm}^{2} = -\mathfrak{X}_{b,t,\pm}^{2}$ is the sum of a harmonic oscillator along the fibres of~$\pi \colon T^{*}M \to E$, the Lie derivative by~$\operatorname{sgrad}\mathcal H_\pm$, and some terms of lower order or smaller growth at infinity. In particular, $\frac{\partial}{\partial u} - \mathfrak{A}^{2}_{b,t,\pm}$ is hypoelliptic in the sense of H\"{o}rmander, for an extra variable~$u \in {\mathbb R}$. By~\cite{BLprep}, the restriction~$\mathfrak{A}_{b,t,\pm}^{[0],2}$
of the operator~$\mathfrak{A}_{b,t,\pm}^{2}$ to the fibres of~$(p\circ\pi)\colon T^*M\to B$ has discrete spectrum and compact resolvent. Recall that~$n = \dim M$.

\begin{Theorem}[Bismut and Lebeau~\cite{BLprep}]\label{thm8.4}
The operator~$\mathfrak{A}_{b,t,\pm}^{\prime[0]}$ acts on the generalised 0-eigenspace~${\rm ker}(\mathfrak{A}_{b,t,\pm}^{[0],2N})$ of the operator~$\mathfrak{A}_{b,t,\pm}^{[0],2}$, where~$N \gg 0$, and for all~$k \in {\mathbb Z}$ and all~$b$, $t > 0$,
\begin{equation}\label{eq8.24}
\begin{aligned}
H^{k}({\rm ker}(\mathfrak{A}_{b,+}^{[0],2N}),\mathfrak{A}^{\prime[0]}_{b,+}) &\cong H_{+}^{k}(E/B;F) = H^{k}(E/B;F), \\
H^{k+n}({\rm ker}(\mathfrak{A}_{b,-}^{[0],2N}),\mathfrak{A}^{\prime[0]}_{b,-}) &\cong H_{-}^{k+n}(E/B;F) = H^{k}(E/B;F \otimes o(TM))\;.
\end{aligned}
\end{equation}
\end{Theorem}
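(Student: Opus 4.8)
The plan is to prove Theorem~\ref{thm8.4} as a hypoelliptic version of the Hodge theorem, in two steps: first reduce the cohomology of the finite-dimensional complex $\bigl(\ker(\mathfrak A_{b,t,\pm}^{[0],2N}),\mathfrak A_{b,t,\pm}^{\prime[0]}\bigr)$ to the cohomology of the full hypoelliptic de Rham complex of $\pi^{*}F$ on $T^{*}M$, and then identify the latter with fibrewise cohomology by a purely topological argument. Before this one has to check that $\mathfrak A_{b,t,\pm}^{\prime[0]}$ indeed acts on $\ker(\mathfrak A_{b,t,\pm}^{[0],2N})$ and squares to zero there, and that $N\gg0$ suffices. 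The first follows because $\mathfrak A'_{\pm}$ is a flat superconnection, so $(\mathfrak A'_{\pm})^{2}=0$, and because $\mathfrak A'_{\pm}$ commutes with $\mathfrak A_{\pm}^{2}$ (hence with $\mathfrak A_{\pm}^{2N}$): using $(\mathfrak A'_{\pm})^{2}=(\bar{\mathfrak A}'_{\pm})^{2}=0$ one gets that $\mathfrak A_{\pm}^{2}$ equals, up to a positive constant, the ``Laplacian'' $\mathfrak A'_{\pm}\bar{\mathfrak A}'_{\pm}+\bar{\mathfrak A}'_{\pm}\mathfrak A'_{\pm}$, which commutes with $\mathfrak A'_{\pm}$. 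Restricting to the fibres of $p\circ\pi$, the operator $\mathfrak A_{b,t,\pm}^{[0],2}$ has discrete spectrum and compact resolvent by~\cite{BLprep}, so its generalised $0$-eigenspace is finite-dimensional, stabilises for $N\gg0$, and equals the range of the Riesz projection $P_{0}=\tfrac1{2\pi i}\oint(\lambda-\mathfrak A_{b,t,\pm}^{[0],2})^{-1}\,d\lambda$ around a small circle enclosing only~$0$; by hypoellipticity its elements are smooth, so they lie in the domain of $\mathfrak A_{b,t,\pm}^{\prime[0]}$, which preserves this kernel and squares to zero, giving a genuine complex.

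For the Hodge reduction, set $P_{>0}=\id-P_{0}$. Since $\mathfrak A'_{\pm}$ and its $\mathfrak h$-adjoint $\bar{\mathfrak A}'_{\pm}$ commute with $\mathfrak A_{b,t,\pm}^{[0],2}$, they commute with all its resolvents and hence with $P_{0}$ and $P_{>0}$, so over each point of~$B$ the hypoelliptic complex splits, in the relevant completion, as the direct sum of the finite-dimensional complex $\bigl(\operatorname{im}P_{0},\mathfrak A_{b,t,\pm}^{\prime[0]}\bigr)$ and the complex $\bigl(\operatorname{im}P_{>0},\mathfrak A'_{\pm}\bigr)$. On $\operatorname{im}P_{>0}$ the number $0$ lies outside the spectrum of $\mathfrak A_{b,t,\pm}^{[0],2}$, which is therefore boundedly invertible there and whose inverse commutes with $\mathfrak A'_{\pm}$; hence if $\alpha\in\operatorname{im}P_{>0}$ satisfies $\mathfrak A'_{\pm}\alpha=0$ then
\[
\alpha=\bigl(\mathfrak A_{b,t,\pm}^{[0],2}\bigr)^{-1}\bigl(\mathfrak A'_{\pm}\bar{\mathfrak A}'_{\pm}+\bar{\mathfrak A}'_{\pm}\mathfrak A'_{\pm}\bigr)\alpha=\mathfrak A'_{\pm}\,\bigl(\mathfrak A_{b,t,\pm}^{[0],2}\bigr)^{-1}\bar{\mathfrak A}'_{\pm}\alpha\;,
\]
so $\bigl(\operatorname{im}P_{>0},\mathfrak A'_{\pm}\bigr)$ is acyclic. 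Consequently the cohomology of the finite-dimensional complex equals the cohomology of the full hypoelliptic complex of $\pi^{*}F$ on $T^{*}M$, fibrewise over~$B$.

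For the topological identification, recall from~\eqref{eq8.21} that $\mathfrak A'_{\pm}$ is obtained from the total exterior derivative $\mathbb A'=d_{E}$ on $\Omega^{\bullet}(T^{*}M;\pi^{*}F)$ by conjugation with the invertible even inhomogeneous form $e^{\mathcal H_{\pm}-\omega^{H}}$. Transporting the hypoelliptic complex to its natural completion and undoing this conjugation, one is left with the fibrewise de Rham complex of $\pi^{*}F$ on $T^{*}M\to E$ in a space of forms carrying along the fibres $T^{*}_{q}M\cong\R^{n}$ of $\pi$ a Gaussian weight whose sign is that of the Hamiltonian $\mathcal H_{\pm}$ in~\eqref{eq8.19}. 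For the $+$-sign the weight is decaying, and the resulting cohomology agrees with the ordinary fibrewise de Rham cohomology of $T^{*}M\to E$; since the zero section is a fibrewise deformation retract of $\pi$, this is $H^{\bullet}(E/B;F)$. For the $-$-sign the weight grows, and the resulting cohomology agrees with the fibrewise compactly supported de Rham cohomology of $T^{*}M\to E$; by the Thom isomorphism for the rank-$n$ real vector bundle $T^{*}M\to E$ this equals $H^{\bullet-n}(E/B;F\otimes o(T^{*}M))$, and $o(T^{*}M)\cong o(TM)$. This yields both isomorphisms in~\eqref{eq8.24}, with the shift by $n=\dim M$ and the orientation twist coming from the Thom isomorphism.

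The main obstacle is the last identification in the $-$-case: one must show, within the hypoelliptic functional-analytic framework of~\cite{BLprep}, that the Sobolev-type space on which $\mathfrak A_{b,t,-}^{[0],2}$ has compact resolvent corresponds, at the level of $\mathfrak A'_{-}$-cohomology, precisely to the fibrewise compactly supported de Rham complex of $\pi^{*}F$, and that the indefiniteness of the Hermitian form $\mathfrak h$ and the non-self-adjointness of $\mathfrak A_{b,t,-}^{[0],2}$ introduce no spurious cohomology — the degree shift by~$n$ and the orientation twist being invisible from the naive homotopy-invariance picture. This is exactly the point where the detailed hypoelliptic estimates and Poincar\'e-lemma arguments with prescribed growth bounds of~\cite{BLprep} are needed. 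Independence of $b$ and $t$ then follows because varying them yields a homotopy of the complexes above inducing the identity on cohomology.
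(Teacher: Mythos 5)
First, a remark on the comparison you were asked for: this survey does not prove Theorem~\ref{thm8.4} at all --- it is quoted from Bismut--Lebeau~\cite{BLprep} --- so there is no in-paper argument to measure your proposal against. Judged on its own, your outline does follow what is essentially the Bismut--Lebeau strategy: non-self-adjoint ``Hodge theory'' via the Riesz projection $P_0$ onto the generalised $0$-eigenspace (using $(\mathfrak A'_\pm)^2=(\bar{\mathfrak A}'_\pm)^2=0$, the anticommutator identity for $\mathfrak A_\pm^2$ up to the factor $4$ you suppress, and acyclicity on $\operatorname{im}P_{>0}$ by the homotopy $(\mathfrak A^{[0],2}_{b,t,\pm})^{-1}\bar{\mathfrak A}'_\pm$), followed by undoing the conjugation $e^{\mathcal H_\pm-\omega^H}$ and identifying the resulting weighted de~Rham complex with $H^\bullet(E/B;F)$ (zero-section retraction, $+$ case) respectively with the fibrewise compactly supported cohomology and hence, via the Thom isomorphism for $T^*M\to E$, with $H^{\bullet-n}(E/B;F\otimes o(TM))$ ($-$ case). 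Your sign bookkeeping is consistent: the decaying weight $e^{-2\mathcal H_+}$ in the norm allows Gaussian growth of forms and yields ordinary cohomology, while the growing weight in the $-$ case forces Schwartz-type decay and yields the Thom-shifted cohomology, matching~\eqref{eq8.24}.

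The genuine gap is that the analytic core of the theorem is not proved but delegated to~\cite{BLprep} itself, which makes the argument circular as a proof of a statement attributed to that reference. Concretely, two things are missing. (i) The formal Hodge step needs operator-theoretic substance: $\mathfrak A^{[0],2}_{b,t,\pm}$ is neither elliptic nor self-adjoint for the indefinite form $\mathfrak h$, so discreteness of the spectrum, the fact that $0$ is isolated, the commutation of the unbounded odd operators $\mathfrak A'_\pm$, $\bar{\mathfrak A}'_\pm$ with the resolvent and with $P_0$ on honest domains, and the bounded invertibility on $\operatorname{im}P_{>0}$ in the hypoelliptic Sobolev scale are exactly the estimates of~\cite{BLprep}; invoking them is not a reduction of the problem. (ii) The ``purely topological'' step is not purely topological: one must show that the cohomology of the functional-analytic completion on which the spectral theory lives (a hypoelliptic Sobolev/$L^2$ domain, not a space defined by a naive Gaussian growth condition) agrees with that of the smooth weighted model --- i.e.\ regularity of cohomology classes and the action of the radial homotopy operator and of fibrewise integration on these spaces, in both the $+$ and the $-$ case, with the indefiniteness of $\mathfrak h$ playing no role only after this is established. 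You flag this honestly as ``the main obstacle'', but flagging it is where a proof of~\eqref{eq8.24} would actually have to begin; as it stands the proposal is a correct strategic outline of the Bismut--Lebeau argument rather than a proof.
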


Note that~$H_{-}^{n+k}(E/B;F^*)\cong H_{+}^{n-k}(E/B;F)$
by fibrewise Poincar\'e duality.

\subsection{Bismut-Lebeau torsion}\label{sect8.2}
We can now explain the higher torsion~$\mathcal T_{b,\pm}$
of the cotangent bundle defined by Bismut and Lebeau in~\cite{BLprep}.
We will see how it fits into Igusa's axiomatic framework of~\cite{Iax},
see section~\ref{sect5.3} above.
At the moment,
Bismut-Lebeau torsion is only defined for small positive values of~$b$.
A definition for all~$b>0$ would be nicer because
as the hypoelliptic Laplacian converges to the generator of the geodesic
flow as~$b\to\infty$, one hopes to recover some information about
the fibrewise geodesic flow from the higher torsion.

The Hermitian form~$\mathfrak{h}$ of~\eqref{eq8.17} restricts to a nondegenerate Hermitian form~$\mathfrak{h}_{b}^{H_{\pm}}$ on~$H^{\bullet}_{\pm}(E/B;F)$, so one still has characteristic forms~$\cho(H_{\pm}^{\bullet}(E/B;F),\mathfrak{h}_{b}^{H_{\pm}}) \in \Omega^{{\rm odd}}(B)$.

Bismut and Lebeau also show
that the heat operator~$e^{-\mathfrak A_{b,t,\pm}^2}$ is a smoothing operator
and of trace class.
Analytic torsion
forms~$\mathcal T_{b,\pm}(T^HE,\penalty0 g^{TM},\penalty0 g^F)\in\Omega^{\mathrm{even}}B$
can thus be defined as in section~\ref{sect2.2}.
They satisfy the following analogue of Theorem~\ref{thm1.3}.

\begin{Theorem}[Bismut and Lebeau~\cite{BLprep}]\label{thm8.5}
  For~$b>0$ sufficiently small,
  \begin{multline}\label{eq8.25}
    d\mathcal T_{b,\pm}\bigl(T^HE,g^{TM},g^F\bigr)
    =\int_{E/B}e\bigl(TM,\nabla^{TM}\bigr)\,\cho\bigl(F,g^F\bigr)\\
	-\cho\bigl(H^\bullet_\pm(E/B;F),\mathfrak h^{H_\pm}_b\bigr)\;.
  \end{multline}
\end{Theorem}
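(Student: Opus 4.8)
The plan is to follow the proof of Theorem~\ref{thm1.3}, and of its finite-dimensional model Theorem~\ref{thm2.1}, replacing the Bismut superconnection and its curvature by the hypoelliptic data $\mathfrak A_{b,t,\pm}$, $\mathfrak X_{b,t,\pm}$ of~\eqref{eq8.22}--\eqref{eq8.23}, for a fixed small $b>0$. On $B\times(0,\infty)$ with coordinate $t$ one forms, exactly as in~\eqref{eq2.25}, the real odd form
\[(2\pi i)^{\frac{1-N^B}{2}}\,\str\bigl(\mathfrak X_{b,t,\pm}\,e^{\mathfrak X_{b,t,\pm}^2}\bigr)\;,\]
which is closed because $\mathfrak A_{b,t,\pm}$ is an $\mathfrak h$-selfadjoint superconnection and $\mathfrak X_{b,t,\pm}$ is $\mathfrak h$-skew adjoint: Bismut's transgression identities for superconnections depend only on this algebraic structure, not on positivity, so they apply to the indefinite Hermitian form $\mathfrak h$ of~\eqref{eq8.17}. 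That all the supertraces are well defined uses that $e^{-\mathfrak A_{b,t,\pm}^2}$ is smoothing and trace class, one of the main analytic inputs supplied by~\cite{BLprep}.

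First I would analyse the two limits, in parallel with~\eqref{eq2.22} and~\eqref{eq2.23}. As $t\to0$ the operator $-\mathfrak X_{b,t,\pm}^2$ degenerates to a harmonic oscillator along the fibres of $\pi\colon T^*M\to E$ coupled to the fibrewise de Rham operator of $p$; a Getzler-type rescaling together with the local index theorem for the hypoelliptic Laplacian of~\cite{Bhypo}, \cite{BLprep} gives
\[\lim_{t\to0}\str\bigl(\mathfrak X_{b,t,\pm}\,e^{\mathfrak X_{b,t,\pm}^2}\bigr)\big|_{B\times\{t\}}=\int_{E/B}e\bigl(TM,\nabla^{TM}\bigr)\,\str\bigl(\omega(F,g^F)\,e^{\omega(F,g^F)^2}\bigr)\;,\]
the same right-hand side as in~\eqref{eq2.22}. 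As $t\to\infty$ one invokes Theorem~\ref{thm8.4}: the generalised $0$-eigenspace $\ker\bigl(\mathfrak A_{b,\pm}^{[0],2N}\bigr)$ is finite-dimensional, carries the restricted Hermitian form $\mathfrak h_b^{H_\pm}$, and computes $H^\bullet_\pm(E/B;F)$, while the rest of the spectrum of $-\mathfrak X_{b,t,\pm}^2$ stays bounded away from $0$; hence the odd heat operator converges to its harmonic part and
\[\lim_{t\to\infty}\str\bigl(\mathfrak X_{b,t,\pm}\,e^{\mathfrak X_{b,t,\pm}^2}\bigr)\big|_{B\times\{t\}}=\str_{H_\pm}\bigl(\omega(H^\bullet_\pm,\mathfrak h_b^{H_\pm})\,e^{\omega(H^\bullet_\pm,\mathfrak h_b^{H_\pm})^2}\bigr)\;.\]
Since the transgression form is closed these two limits are cohomologous, which is already the hypoelliptic analogue of Theorem~\ref{thm1.1}.

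To obtain $\mathcal T_{b,\pm}$ one integrates the $\iota_{\partial/\partial t}$-component of the closed form over $t\in(0,\infty)$, subtracting the divergent contributions at $0$ and $\infty$ and fixing the universal constants exactly as in Definition~\ref{Def2.4}. For the resulting primitive $\mathcal T_{b,\pm}(T^HE,g^{TM},g^F)\in\Omega^{\mathrm{even}}(B)$, Stokes' theorem in the $t$-variable together with the two limit formulas yields
\[d\mathcal T_{b,\pm}\bigl(T^HE,g^{TM},g^F\bigr)=\int_{E/B}e\bigl(TM,\nabla^{TM}\bigr)\,\cho\bigl(F,g^F\bigr)-\cho\bigl(H^\bullet_\pm(E/B;F),\mathfrak h_b^{H_\pm}\bigr)\;,\]
which is~\eqref{eq8.25}. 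The restriction to small $b$ enters only in the analytic steps: for small $b$ the hypoelliptic deformation is a controllable perturbation of the elliptic theory of Section~\ref{sect2.2}, so the uniform heat-kernel estimates, the small-$t$ rescaling argument, and the spectral gap as $t\to\infty$ can all be established.

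The main obstacle is precisely this last point. Because $\mathfrak A_{b,t,\pm}^2$ is only hypoelliptic and $\mathfrak h$ is indefinite of signature $(\infty,\infty)$, none of the usual elliptic a priori estimates are available, and the smoothing and trace-class property of $e^{-\mathfrak A_{b,t,\pm}^2}$, the convergence of the supertraces as $t\to0$ and $t\to\infty$, and above all a local index theorem adapted to $\mathfrak A_{b,t,\pm}^2$ that identifies the small-$t$ limit with $e(TM,\nabla^{TM})\,\cho(F,g^F)$, all require the substantial hypoelliptic machinery of~\cite{Bhypo}, \cite{BLprep}. By contrast everything formal --- closedness of the transgression form, the shape of the correction terms, Stokes in $t$ --- is a routine transcription of Sections~\ref{sect2.1}--\ref{sect2.2}; the real content is showing that these formal manipulations remain legitimate for the hypoelliptic Laplacian.
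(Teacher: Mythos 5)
Your proposal takes essentially the same route as the paper, which does not prove Theorem~\ref{thm8.5} itself but cites~\cite{BLprep} and frames~$\mathcal T_{b,\pm}$ as defined exactly as in Section~\ref{sect2.2}, so that~\eqref{eq8.25} follows by the same closed-transgression-form argument used for Theorems~\ref{thm2.1} and~\ref{thm1.3}, with the $t\to0$ limit given by a hypoelliptic local index theorem and the $t\to\infty$ limit by the spectral information of Theorem~\ref{thm8.4}. You also correctly locate the real content --- trace-class heat kernels, the rescaling argument, and the small-$b$ spectral control for the indefinite form~$\mathfrak h$ --- in the analytic machinery of~\cite{Bhypo}, \cite{BLprep} rather than in the formal transcription.
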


Note that~$\cho(H^\bullet_-(E/B;F))=(-1)^n\,\cho(H^\bullet_+(E/B;F))$
by~\eqref{eq8.24}.
This gives no contradiction in~\eqref{eq8.25} because for odd~$n$,
the first term on the right hand side vanishes.

It is now natural to compare~$\mathcal T_{b,\pm}$ with
the Bismut-Lott torsion~$\mathcal T$ of section~\ref{sect2.2}.
Recall that we have defined a metric~$g^H_{L^2}$ on~$H(E/B;F)$
in section~\ref{sect1.3}.
Let~$g^{H_\pm}_{L^2}$ denote the induced metric on~$H_\pm(E/B;F)$.

\begin{Theorem}[Bismut and Lebeau~\cite{BLprep}]\label{thm8.6}
  For~$b>0$ sufficiently small,
  the Hermitian form~$(\pm 1)^n\,\mathfrak h_b^{H_\pm}$
  is positive definite,
  and modulo exact forms on~$B$ one has
  \begin{multline}\label{eq8.26}
    \mathcal T_{b,\pm}\bigl(T^HE,g^{TM},g^F\bigr)
    =(\pm 1)^n\,\mathcal T\bigl(T^HE,g^{TM},g^F\bigr)\\
	-\chosl\bigl(H_\pm(E/B;F),g^H_{L^2},\mathfrak h^{H_\pm}_b\bigr)
	\pm\tr_{E/B}^*\Jnull(TM)\,\rk F\;.
  \end{multline}
\end{Theorem}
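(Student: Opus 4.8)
The plan is to upgrade the double--transgression argument of Section~\ref{sect2} to the two--parameter family $\mathfrak A_{b,t,\pm}$ of superconnections and then to reduce~\eqref{eq8.26} to the analysis of the singular limit $b\to0$, in which $\mathfrak A^{2}_{b,t,\pm}$ on $T^{*}M$ collapses onto the fibrewise Hodge Laplacian on $M$. One first observes that, by Theorem~\ref{thm8.5} together with~\eqref{eq1.7} and Theorem~\ref{thm1.3}, the two sides of~\eqref{eq8.26} already have the same exterior derivative, so their difference is closed; the real task is to identify the cohomology class, and for this a limiting argument is needed. The first step is an anomaly formula in the parameter $b$: exactly as in the proof of Theorem~\ref{thm2.6}, one works over $B\times(0,\infty)_{b}\times(0,\infty)_{t}$ with the superconnection induced by $\mathfrak A_{\pm}$, uses that the associated odd form is closed, and applies Stokes' theorem, the $t$--limits being controlled by the analysis behind Theorem~\ref{thm8.5}. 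Since changing $b$ does not affect the first term on the right of~\eqref{eq8.25} and only alters the Hermitian form on cohomology, this gives, modulo exact forms on $B$ and for all sufficiently small $b,b'>0$,
\[
  \mathcal T_{b,\pm}\bigl(T^{H}E,g^{TM},g^{F}\bigr)-\mathcal T_{b',\pm}\bigl(T^{H}E,g^{TM},g^{F}\bigr)=-\chosl\bigl(H_{\pm}(E/B;F),\mathfrak h_{b'}^{H_\pm},\mathfrak h_{b}^{H_\pm}\bigr)\;.
\]
By additivity of $\chosl$ it therefore suffices to show that $\mathcal T_{b,\pm}(T^{H}E,g^{TM},g^{F})+\chosl(H_{\pm}(E/B;F),g^{H}_{L^{2}},\mathfrak h_{b}^{H_\pm})$ converges as $b\to0$ to $(\pm1)^{n}\,\mathcal T(T^{H}E,g^{TM},g^{F})\pm tr_{E/B}^{*}\Jnull(TM)\,\rk F$.

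The heart of the matter, and the step I expect to be the main obstacle, is this limit. As recalled in Section~\ref{sect8.1}, $\mathfrak A^{2}_{b,t,\pm}$ is the sum of a harmonic oscillator along the cotangent fibres of $\pi\colon T^{*}M\to E$ with frequency of order $b^{-2}$, the Lie derivative along $\operatorname{sgrad}\mathcal H_{\pm}$, and terms of lower order or slower growth at infinity. The plan is to localise near the zero section, rescale the cotangent fibre coordinate by a factor of $b$, and show that after rescaling the operator converges to the orthogonal sum of the fibrewise Hodge Laplacian on $\Omega^{\bullet}(E/B;F)$, respectively on $\Omega^{\bullet}(E/B;F\otimes o(TM))$, and a fixed harmonic oscillator on $\R^{n}\cong T_{q}^{*}M$. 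Correspondingly the supertrace in the integrand of $\mathcal T_{b,t,\pm}$ factorises in the limit into the Bismut--Lott integrand of Section~\ref{sect2.2}, times the contribution of the oscillator. A Getzler rescaling of the elliptic part supplies the Euler form $e(TM,\nabla^{TM})$ as in~\eqref{eq2.22}, while a Mehler formula for the oscillator, together with a Mellin transform in $t$, produces the $\zeta$--values $\zeta'(-2k)$ out of its regularised heat trace; after fibre integration these assemble into $\pm tr_{E/B}^{*}\Jnull(TM)\,\rk F$, by the same mechanism that yields the $\Jnull$--terms in Corollary~\ref{cor3.7} and Theorem~\ref{thm5.1}. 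The sign $(\pm1)^{n}$ and the orientation twist $o(TM)$ in the ``$-$'' case come from the identifications~\eqref{eq8.24} and fibrewise Poincar\'e duality on $T^{*}M$, encoded in the involution $u$ of~\eqref{eq8.16}, and the sign of the $\Jnull$--term is that of the Hamiltonian $\mathcal H_{\pm}$. The genuinely difficult input is that all of this has to hold uniformly in $t\in(0,\infty)$, with enough control as $t\to0$ and $t\to\infty$ to interchange the limit $b\to0$ with the $t$--integral of Definition~\ref{Def2.4}; this is precisely where the uniform hypoelliptic heat--kernel estimates, the hypoelliptic Getzler rescaling, and the resolvent analysis of Bismut and Lebeau enter, and I expect this to be by far the longest part of the proof.

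Finally, positivity of $(\pm1)^{n}\,\mathfrak h_{b}^{H_\pm}$ for small $b$ drops out of the same analysis: as $b\to0$ the generalised zero--eigenspace $\ker\bigl(\mathfrak A_{b,\pm}^{[0],2N}\bigr)$ concentrates on the ground state of the fibre harmonic oscillator, on which the involution $u$ acts as $(\pm1)^{n}$, so that $(\pm1)^{n}\,\mathfrak h_{b}^{H_\pm}$ converges to a Hermitian form that restricts to $g^{H}_{L^{2}}$ on $H_{\pm}(E/B;F)$ up to the orientation twist, hence is positive definite; positive definiteness being an open condition, it persists for $b>0$ small, which also legitimises the use of $\chosl$ in the first step. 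Combining the $b$--anomaly formula with the limit identification then yields~\eqref{eq8.26} modulo exact forms on $B$.
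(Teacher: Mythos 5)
Before anything else: the paper itself contains no proof of this statement. Theorem~\ref{thm8.6} is quoted from Bismut and Lebeau~\cite{BLprep} in a survey, and the only argument the text offers is the consistency check of Remark~\ref{rem8.7}, namely that applying $d$ to~\eqref{eq8.26} is compatible with Theorems~\ref{thm1.3} and~\ref{thm8.5}, plus an interpretation of the $\Jnull$-term as a Miller--Morita--Mumford class. So there is no in-paper proof to compare yours with; your proposal has to be measured against the actual Bismut--Lebeau argument, of which it is a reasonable but very compressed reconstruction: the architecture (closedness of the difference, an anomaly formula in $b$, identification of the class in the limit $b\to 0$ where the operator localises near the zero section and splits into the fibrewise Hodge Laplacian plus a transverse harmonic oscillator, the $\zeta'(-2k)$ coefficients of $\Jnull$ coming from regularised oscillator traces, and positivity of $(\pm1)^n\,\mathfrak h_b^{H_\pm}$ from concentration on the oscillator ground state, with the degree shift and $o(TM)$-twist explaining $(\pm1)^n$) is the right overall shape.

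That said, two of your steps are genuine gaps rather than routine adaptations, and together they carry essentially all of the content. First, the $b$-anomaly formula is not obtained ``exactly as in the proof of Theorem~\ref{thm2.6}'': here the superconnection is self-adjoint only for the indefinite form $\mathfrak h$, the relevant zero-eigenspace is a generalised eigenspace of a non-self-adjoint hypoelliptic operator, and the $t\to0$ and $t\to\infty$ behaviour of the transgressed forms is not uniform in $b$; proving that the integrals defining $\mathcal T_{b,\pm}$ and the transgression in $b$ converge and that Stokes' theorem applies is a substantial portion of~\cite{BLprep}, not a corollary of Section~\ref{sect2}. Second, and more seriously, the assertion that in the limit $b\to0$ the supertrace ``factorises'' into the Bismut--Lott integrand times an oscillator factor, with Mehler plus a Mellin transform producing $\pm tr_{E/B}^*\,\Jnull(TM)\,\rk F$, is precisely the statement to be proved: if the convergence were uniform in $t$ one would get no defect at all, and the $\Jnull$-term is generated exactly by the regimes in which $t$ is coupled to $b$ and the naive factorisation fails. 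Your sketch names the right mechanism (harmonic-oscillator determinants producing $\zeta'(-2k)$, as in Corollary~\ref{cor3.7} and Theorem~\ref{thm5.1}) but gives no procedure for isolating this contribution beyond invoking the Bismut--Lebeau estimates as a black box. So the proposal is best described as a correct program whose two deferred steps are where the theorem actually lives; it is consistent with, but does not substitute for, the proof in~\cite{BLprep}.
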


\begin{Remark}\label{rem8.7}
  If one applies the exterior derivative~$d$ on~$B$ to~\eqref{eq8.26},
  the result is compatible with Theorems~\ref{thm1.3} and~\ref{thm8.5},
  which explains the first two terms on the right hand side of~\eqref{eq8.26}.

  The last term is a Miller-Morita-Mumford class in Igusa's sense.
  It cannot be guessed from Theorems~\ref{thm1.3} and~\ref{thm8.5}.
  But if we believe that~$\mathcal T_{b,\pm}\bigl(T^HE,g^{TM},g^F\bigr)$
  is a higher torsion invariant in the sense of Definition~\ref{Def5.6},
  then it is not surprising that such a class appears here.
  On the other hand, it is surprising
  that~$\mathcal T_{b,\pm}\bigl(T^HE,g^{TM},g^F\bigr)$
  is given by the same linear combination of the classes
  in Theorem~\ref{thm5.7} as Igusa-Klein torsion
  in the following special case.
  If~$F$ is acyclic and~$E\to B$ admits a fibrewise Morse function,
  then
  \begin{equation}\label{eq8.27}
    \mathcal T_{b,-}\bigl(T^HE,g^{TM},g^F\bigr)=(-1)^n\,\tau(E/B;F)
  \end{equation}
  by comparison with Theorem~\ref{thm5.5}.
  If~$h$ has trivial stable tangent bundle~$T^sM$ in an analogous sense
  to Definition~\ref{Def4.1},
  the class $\mathcal T_{b,+}\bigl(T^HE,g^{TM},g^F\bigr)$
  equals~$\xi_h(E/B;F)^*\,\tau$ up to sign.
  Conjecturally, these equations hold even if there is no
  fibrewise Morse function.
  This coincidence indicates a relation between
  the Bismut-Lebeau analytic torsion and Igusa-Klein torsion
  that is even deeper than Theorems~\ref{thm5.5} or~\ref{thm5.7}.
\end{Remark}

\end{document}